\documentclass[11pt,reqno]{amsart}

\usepackage[T1]{fontenc}
\usepackage{lmodern}
\usepackage{microtype}
\usepackage{amsmath,amssymb,amsthm,mathtools}
\usepackage{graphicx}
\usepackage{enumitem}
\usepackage[colorlinks=true,linkcolor=blue,citecolor=blue,urlcolor=blue]{hyperref}
\usepackage[nameinlink,capitalise]{cleveref}
\usepackage[a4paper,margin=32mm]{geometry}

\numberwithin{equation}{section}

\newtheorem{theorem}{Theorem}[section]
\newtheorem{proposition}[theorem]{Proposition}
\newtheorem{lemma}[theorem]{Lemma}
\newtheorem{corollary}[theorem]{Corollary}
\newtheorem{problem}[theorem]{Problem}
\crefname{problem}{problem}{problems}
\Crefname{problem}{Problem}{Problems}
\theoremstyle{definition}

\theoremstyle{remark}
\newtheorem{remark}[theorem]{Remark}

\newcommand{\ddc}{dd^{c}}
\newcommand{\cO}{\mathcal O}

\newcommand{\bP}{\mathbb P}
\newcommand{\C}{\mathbb C}
\newcommand{\R}{\mathbb R}

\newcommand{\dist}{\operatorname{dist}}

\title[Uniform Green packing]{Uniform Green Packing on Compact Riemann
Surfaces}
\date{}

\author{Haakan Hedenmalm}
\address{
Department of Mathematics and Computer Sciences,
St Petersburg State University, St Petersburg, Russia;
Beijing Institute for Mathematical Sciences and Applications,
Huairou District, Beijing 101408, China;
Department of Mathematics, University of Aveiro, Aveiro, Portugal;
Department of Mathematics and Statistics,
University of Reading, RG6 6AX Reading, England}
\email{haakan00@gmail.com}

\author{Song-Yan Xie}
\address{State Key Laboratory of Mathematical Sciences, Academy of Mathematics
and Systems Science, Chinese Academy of Sciences, Beijing 100190, China;
School of Mathematical Sciences, University of Chinese Academy of Sciences,
Beijing 100049, China}
\email{xiesongyan@amss.ac.cn}

\subjclass[2020]{30F15, 31A05, 32A25, 14H55}
\keywords{Green function, compact Riemann surface, zero divisor,
quantitative transversality, Laplace design, equidistribution,
Wasserstein distance}

\begin{document}

\begin{abstract}
Let $X$ be a compact Riemann surface equipped with a smooth positive
probability form $\mu$, and let $U_X$ be its normalized Green kernel.  We
prove that for every $N\geq1$ there is an $N$-point configuration $W_N$ such
that
\[
 \sup_{z\in X}\sum_{w\in W_N}U_X(z,w)\leq C_{X,\mu},
\]
with a constant independent of $N$.  Consequently, for every $\beta>0$,
\[
 1\leq\inf_{W\in X^N}\int_X
 \exp\!\left(\beta\sum_{w\in W}U_X(z,w)\right)d\mu(z)
 \leq e^{\beta C_{X,\mu}}.
\]
We give two independent constructions.  The first uses quantitatively
transverse holomorphic sections of powers of a positive degree-one line
bundle; their zero divisors are simple, separated, and covering at scale
$N^{-1/2}$, and a uniform local zero model yields a bounded logarithmic
$L^1$ norm.  The second uses equal-weight Laplace designs at bandwidth
comparable to $\sqrt N$, exact low-frequency cancellation, and localization
of the high-frequency Green remainder.  Both constructions imply
optimal-order $W_1$ equidistribution.  We also show that the constant cannot be
taken arbitrarily small: every configuration has $\sup_zV_W\geq a_X/2$ for some
$a_X>0$ depending only on the surface, so the optimal exponential moment exceeds
the Jensen value $1$ by at least $\beta^2a_X^2/8$, uniformly in $N$.
\end{abstract}

\maketitle

\tableofcontents

\section{Introduction}\label{sec:intro}

For a polynomial
\[
 P(z)=a\prod_{j=1}^{N}(z-w_j),
\]
factorization gives
\[
 \log|P(z)|=\log|a|+\sum_{j=1}^{N}\log|z-w_j|
\]
and thereby converts a question about products into one about logarithmic
potentials.  After the usual metric normalization, this observation leads on
the Riemann sphere to geometric Bombieri-type inequalities.  On a flat torus,
analogous pseudopolynomial products can be written using Weierstrass sigma
functions together with the automorphy correction required by the lattice
quotient; see \cite{EtayoHedenmalmOrtega}.  These examples suggest isolating
the potential-theoretic statement that does not depend on an explicit product
formula.

On a general compact curve $X$ equipped with a smooth positive probability
form $\mu$, an effective divisor $D=w_1+\cdots+w_N$ still determines a
canonical divisor section $s_D\in H^0(X,\cO(D))$.  What is generally absent is
a preferred single-valued scalar product representation analogous to the
affine polynomial factorization.  The normalized Green kernel provides the
intrinsic replacement.  Relative to $\mu$, its logarithmic singularity at $w$
models $\log|z-w|$, while the mean-zero condition fixes its additive constant
globally.  More precisely, choose a Hermitian metric $h_D$ on $\cO(D)$ with
$c_1(\cO(D),h_D)=N\mu$.  The Poincar\'e--Lelong formula and the mean-zero
normalization give
\[
 \sum_{j=1}^{N}U_X(\,\cdot\,,w_j)
 =
 \log|s_D|_{h_D}
 -
 \int_X\log|s_D|_{h_D}\,d\mu.
\]

This leads to a one-sided packing problem.  Each Green kernel tends to
$-\infty$ at its pole, so no uniform two-sided pointwise estimate is
possible.  The natural question is instead whether the poles can be chosen so
that
\[
 \sup_{z\in X}\sum_{j=1}^{N}U_X(z,w_j)
\]
remains bounded independently of $N$.  Such an upper bound controls the
positive excursions of the potential; after exponentiation, the negative
logarithmic singularities become zeros and one obtains uniform
$L^\beta$-type product estimates.

We prove this statement by two separate mechanisms, which we call Route~A and
Route~B.

Route~A is holomorphic.  It takes the points $w_j$ to be the zero divisor of a
holomorphic section of a high tensor power of a positive degree-one line bundle.
Uniform rescaled $C^2$ bounds together with quantitative zero-section
transversality yield a uniform local linear model and separated simple zeros,
while the Poincar\'e--Lelong formula identifies the mean-zero logarithmic norm
with the Green potential.

Route~B is spectral, and works directly with points.  Laplace designs cancel the
spectral modes below the natural frequency scale $\Lambda\asymp\sqrt N$, and
kernel localization together with local counting controls the remaining
high-frequency part.

The two are genuinely different, and each yields something the other does not:
the first gives additional local geometry of the divisor, while the second shows
that the packing estimate has a purely spectral and potential-theoretic origin.

\subsection{Setting and conventions}\label{subsec:conventions}

Let $X$ be a connected compact Riemann surface, and let $\mu$ be a smooth
positive $(1,1)$-form normalized by
\[
 \int_Xd\mu=1.
\]
We use $\mu$ for both this form and its associated measure.  In integrals $d\mu$
refers to the measure; in current identities such as $\ddc U=\delta-\mu$ it
denotes the form.

The following conventions are used throughout, and are collected here once so
that no sign or scale ambiguity can reappear in the covering argument of
\Cref{sec:zero-geometry} or in the spectral construction of
\Cref{sec:route-b}.

\begin{remark}[Analytic conventions]\label{rem:conventions}
We use
\[
 d^c=\frac{i}{2\pi}(\bar\partial-\partial),
 \qquad
 \ddc=\frac{i}{\pi}\partial\bar\partial,
\]
so that $\ddc\log|z|=\delta_0$ on $\C$.  The normalized Green kernel is the
unique symmetric function $U_X(z,w)$ satisfying
\begin{equation}\label{eq:green-normalization}
 \ddc_zU_X(z,w)=\delta_w-\mu,
 \qquad
 \int_XU_X(z,w)\,d\mu(z)=0 .
\end{equation}
It is smooth away from the diagonal.
For the conformal metric whose area form is $\mu$, our Laplace--Beltrami
operator $\Delta$ is nonnegative and satisfies
\[
 \ddc f=-\frac1{2\pi}(\Delta f)\,\mu .
\]
Consequently, for a real orthonormal eigenbasis
$\Delta\varphi_\nu=\lambda_\nu^2\varphi_\nu$ of $L^2(X,\mu)$, the spectral
coefficient of the Green kernel is fixed:
\[
 U_X(z,w)=-2\pi\sum_{\nu\geq1}
 \frac{\varphi_\nu(z)\varphi_\nu(w)}{\lambda_\nu^2}
\]
in the distributional sense.

Route~A uses the rescaled metric $g_N=Ng$ for a fixed compatible metric $g$.  Each
covariant derivative measured in $g_N$ carries a factor $N^{-1/2}$ relative to
$g$, and we restate this whenever a change of metric changes a derivative or a
distance.  Finally, one symbol serves two purposes in the literature and must be
disambiguated here: $L$ always denotes the degree-one line bundle of Route~A,
and the spectral cutoff of Route~B is always written $\Lambda$.
\end{remark}

Thus, in a local coordinate at $w$,
\begin{equation}\label{eq:green-local}
 U_X(z,w)=\log|z-w|+O(1).
\end{equation}
The joint version of this expansion, with a smooth regular part in both
variables, is recorded in \eqref{eq:green-joint-local}.  Unless a dependence
is displayed explicitly, constants may depend on the fixed geometric data
$(X,\mu,g,L,h)$, and on $\beta$ in exponential-moment statements, but never
on $N$.  Subscripts such as $\asymp_X$ abbreviate this fixed-data dependence.

For $W=(w_1,\ldots,w_N)\in X^N$ and $\beta>0$, define
\begin{equation}\label{eq:B-def}
 B_\beta(W;X)
 :=\int_X\exp\!\left(\beta\sum_{j=1}^{N}U_X(z,w_j)\right)d\mu(z)
\end{equation}
and
\begin{equation}\label{eq:theta-def}
 \Theta_{N,\beta}(X):=\inf_{W\in X^N}B_\beta(W;X).
\end{equation}
The singularity in \eqref{eq:green-local} is negative, so the integrand in
\eqref{eq:B-def} vanishes, rather than blows up, at the points $w_j$.
Moreover, the normalization of $U_X$ gives
\[
 \int_X\sum_{j=1}^NU_X(z,w_j)\,d\mu(z)=0.
\]
Consequently Jensen's inequality yields $B_\beta(W;X)\geq1$ for every
configuration.  The substantive issue is whether one can choose $W$ so that
the reverse inequality holds with a constant independent of $N$.  Equivalently,
if
\[
 Y_W(z):=\exp\!\left(\sum_{j=1}^NU_X(z,w_j)\right),
\]
then the mean-zero normalization says that the geometric mean of $Y_W$ is one,
whereas $B_\beta(W;X)=\|Y_W\|_{L^\beta}^\beta$.  The theorem therefore asks
for products with fixed geometric mean and uniformly bounded $L^\beta$
moment.  On the sphere these are normalized polynomial products; on a general
curve the Green kernel replaces the missing global linear factor.

\subsection{The main theorem}\label{subsec:main-statement}

\begin{theorem}[Uniform one-sided Green packing]\label{thm:main}
For every compact Riemann surface $(X,\mu)$ there are a constant
$C_{X,\mu}<\infty$ and configurations $W_N\in X^N$, one for every $N\geq1$,
such that
\begin{equation}\label{eq:main-pointwise-packing}
 \sup_{z\in X}\sum_{w\in W_N}U_X(z,w)\leq C_{X,\mu}.
\end{equation}
Consequently, for every $\beta>0$ and every $N\geq1$,
\[
 1\leq\Theta_{N,\beta}(X)\leq e^{\beta C_{X,\mu}}.
\]
\end{theorem}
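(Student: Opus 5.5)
We prove the pointwise bound \eqref{eq:main-pointwise-packing} first; the exponential-moment statement then follows at once. The lower bound $\Theta_{N,\beta}\geq1$ is Jensen's inequality together with the mean-zero normalization, as explained above. For the upper bound, exponentiate the pointwise estimate, $\exp(\beta\sum_jU_X(z,w_j))\leq e^{\beta C_{X,\mu}}$, and integrate against the probability measure $\mu$. So everything reduces to constructing $W_N$ with a uniformly bounded supremum of the potential $V_W(z):=\sum_{w\in W}U_X(z,w)$.

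We follow Route~A. Fix a holomorphic line bundle $L$ of degree one with a Hermitian metric $h$ whose curvature form is $c_1(L,h)=\mu$; this exists because $\mu$ has total mass one. For each $N$ we choose a section $s_N\in H^0(X,L^N)$ with two properties.
\begin{enumerate}[label=(\roman*)]
\item Uniform $C^2$ bounds hold in the rescaled metric $g_N=Ng$, i.e.\ on discs of radius $N^{-1/2}$, in local holomorphic frames normalized by $h^N$.
\item The section is quantitatively transverse to the zero section: at every $z$, either $|s_N|_{h^N}(z)\geq\varepsilon$ or $|\nabla s_N|_{g_N}(z)\geq\varepsilon$, with $\varepsilon>0$ independent of $N$.
\end{enumerate}
Such sections come from peak-section/Bergman kernel asymptotics combined with a Donaldson-type transversality argument, or from a random Gaussian section plus a probabilistic estimate on bad events. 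Let $W_N$ be the zero divisor of $s_N$; it has exactly $N$ points. Poincar\'e--Lelong gives $\ddc\log|s_N|_{h^N}=[W_N]-N\mu$, and the mean-zero normalization then gives
\[
 V_{W_N}=\log|s_N|_{h^N}-\int_X\log|s_N|_{h^N}\,d\mu.
\]
The upper bound $\log|s_N|\leq C$ follows from the $C^0$ bound in (i). It therefore suffices to bound $-\int_X\log|s_N|_{h^N}\,d\mu$ from above, that is, to bound the negative part of $\log|s_N|$ in $L^1(\mu)$.

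This $L^1$ bound is the main obstacle. Transversality combined with the $C^2$ bound yields a uniform local linear model near each zero $w$: in $g_N$-normal coordinates $\zeta=N^{1/2}(z-w)$ we have $|s_N|\asymp|\zeta|$ on a disc of fixed $g_N$-radius $r_0$. In particular the zeros are $r_0N^{-1/2}$-separated and simple. Off the union of these discs, $|s_N|\geq\varepsilon'$, so $\log|s_N|$ is bounded below there. On each disc,
\[
 \int\bigl|\log|\zeta|\bigr|\,d\mu\lesssim N^{-1}\int_{|\zeta|<r_0}\bigl|\log|\zeta|\bigr|\,dA(\zeta)=O(N^{-1}),
\]
and there are $N$ discs. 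Hence $\int\log^-|s_N|\,d\mu=O(1)$, which gives $\sup V_{W_N}\leq C$.

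The delicate points are the choice of $\varepsilon$ uniform in $N$ and the control of the constants in the local model. As a fallback, Route~B gives an alternative construction. Take an equal-weight Laplace design $W_N$ exact for eigenfunctions with $\lambda_\nu\leq\Lambda\asymp\sqrt N$; the low-frequency part of $V_W$ then vanishes. Split $U_X$ spectrally at $\Lambda$. The high-frequency remainder is a kernel localized at scale $\Lambda^{-1}$, of size about $\log(\Lambda|z-w|)$ near the diagonal and rapidly decaying beyond. Bound the remainder from above using separation and local counting of the design points: each $N^{-1/2}$-ball holds $O(1)$ points.
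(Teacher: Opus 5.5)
Your Route~A argument is correct and is essentially the paper's proof: Donaldson--Auroux-type sections with uniform rescaled $C^2$ bounds and zero-section transversality, the Poincar\'e--Lelong potential identity, the $C^0$ bound for $\sup\log|s_N|$, and an $O(N^{-1})$ logarithmic contribution from each of the $N$ separated zero discs, with the lower bound $|s_N|\geq\varepsilon'$ off the discs following from transversality and the $C^2$ bound through a quantitative Rouch\'e argument, as in the paper's local zero lemma. The repairs needed are minor: the transverse sections exist only for $N\geq N_0$, so you must absorb the finitely many small $N$ into the constant using arbitrary configurations (each has finite supremum), and you should drop two side remarks---a plain random Gaussian section does not satisfy (i)--(ii) with $N$-independent constants (its supremum exceeds its typical size by a factor of order $\sqrt{\log N}$, and with high probability some zero has rescaled derivative tending to $0$), and the Route~B designs are not known to be separated, so that route uses only local counting with multiplicity, a smooth spectral cutoff, and exactness up to $2\Lambda$.
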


The pointwise constant may depend on the fixed surface and the probability
form, but crucially neither on the number of points nor on the exponent.  Thus
one family of configurations works simultaneously for every $\beta>0$, and the
theorem preserves the degree-free feature of Bombieri's spherical inequality
while replacing linear factors by intrinsic Green factors on a curve of
arbitrary genus.

The novelty is best stated in one sentence: the same exponent-free one-sided
Green-potential bound is obtained by two genuinely different mechanisms, one
holomorphic and one spectral.

\subsection{The two constructions}\label{subsec:two-routes}

The scale $N^{-1/2}$ is forced by dimension.  Since $\mu(X)=1$, a
configuration of $N$ approximately equidistributed points assigns area about
$N^{-1}$ to each point, and a two-dimensional cell of that area has diameter
comparable to $N^{-1/2}$.  Equivalently, after replacing a fixed metric $g$ by
$g_N=Ng$, the surface has area comparable to $N$ and the expected point
spacing is of order one.  All local estimates in this paper are made in this
rescaled metric.  The normalization simultaneously makes the curvature of $L^N$
and the first derivative of a transverse section have order one.

The lower bound in \Cref{thm:main} is Jensen's inequality.  The upper bound is
constructive for all sufficiently large $N$; the finitely many remaining degrees
are absorbed by enlarging the final constant.

A word on that phrase, which recurs throughout.  Each construction requires $N$
to exceed some threshold, and these thresholds are chosen finitely many times and
never depend on each other in a circular way: the transversality input fixes one,
the bounded-geometry charts of \Cref{app:rescaling} fix another, and the design
existence of \Cref{sec:route-b} fixes a third.  We write $N_0$ for the largest of
the thresholds relevant to the statement at hand, so that ``for all sufficiently
large $N$'' always means ``for all $N\geq N_0$'' with $N_0$ depending only on the
fixed data $(X,\mu,g,L,h)$.  No statement below requires $N_0$ to be effective,
and the finitely many $N<N_0$ are always absorbed into a constant, as in the proof
of \Cref{thm:main}.

\emph{Route A.}  We choose a degree-one positive line bundle $(L,h)$ with
curvature $\mu$ and use holomorphic sections of $L^N$ which are quantitatively
transverse to the zero section at the rescaled metric $g_N$.  If
$s_N\in H^0(X,L^N)$ is uniformly bounded in rescaled $C^2$ norm and
\begin{equation}\label{eq:intro-transverse}
 |s_N(x)|_{h^N}+|\nabla s_N(x)|_{g_N,h^N}\geq\eta>0,
 \qquad x\in X,
\end{equation}
then its $N$ zeros are simple and uniformly separated in $g_N$.  Near a zero
$p$, the norm of $s_N$ is comparable to $d_{g_N}(z,p)$; away from all zeros it
has a uniform positive lower bound.  The curvature equation for $\log|s_N|$
then prevents a zero-free $g_N$-ball of arbitrarily large fixed radius, so the
same zeros also have uniformly bounded covering radius.  We call
\eqref{eq:intro-transverse} \emph{quantitative zero-section transversality},
or equivalently uniform zero-section nondegeneracy.  In the general
Donaldson--Auroux framework, transversality may be imposed relative to an
auxiliary submanifold; in the present curve setting the relevant submanifold is
simply the zero section of $L^N$, so \eqref{eq:intro-transverse} is precisely
the specialization used throughout.

Three facts now combine: the local linear model, the separation, and the uniform
lower bound away from the zero neighborhoods.  Together they give
\begin{equation}\label{eq:intro-log}
 \int_X\left|\log|s_N|_{h^N}\right|d\mu\leq C.
\end{equation}
The Poincar\'e--Lelong formula identifies the Green potential of the zero divisor
with the mean-zero normalization of $\log|s_N|_{h^N}$:
\begin{equation}\label{eq:intro-potential-identity}
 \sum_{w\in Z_{s_N}}U_X(z,w)
 =\log|s_N(z)|_{h^N}
  -\int_X\log|s_N|_{h^N}\,d\mu.
\end{equation}
The local linear model and separation reduce the negative logarithmic part to
$N$ disjoint zero neighborhoods, each of $\mu$-area $O(N^{-1})$ and each
contributing $O(N^{-1})$.  Away from them, zero-section transversality gives a
uniform lower bound, while the global $C^0$ estimate controls the positive
part.  This proves \eqref{eq:intro-log}; together with
\eqref{eq:intro-potential-identity}, it yields \Cref{thm:main}.  The core of
Route A is therefore the chain
\[
 \begin{aligned}
  &\text{zero-section transversality}\\[-2pt]
  &\qquad\Longrightarrow \text{separated simple zeros}\\[-2pt]
  &\qquad\Longrightarrow \text{uniform logarithmic integrability}\\[-2pt]
  &\qquad\Longrightarrow \text{Green packing}.
 \end{aligned}
\]

\emph{Route B.}  In \Cref{sec:route-b} we give an independent
potential-theoretic proof with three steps.  First, an optimal equal-weight
design cancels all Laplace eigenfunctions up to the natural frequency
$\Lambda\asymp\sqrt N$.  Next, spectral localization makes the high-frequency
Green remainder decay rapidly beyond distance $\Lambda^{-1}$.  Finally, local
counting of the design nodes sums that remainder uniformly.  The matching of
scales is essential: Weyl's law predicts $O(\Lambda^2)$ low frequencies on a
surface, so an $N$-point design can cancel frequencies only up to
$\Lambda\asymp\sqrt N$.  The remainder is localized at spatial scale
$\Lambda^{-1}\asymp N^{-1/2}$, exactly the point-spacing scale.  Polynomial
local counting then sums the rapidly decaying off-diagonal tail, whereas the
negative logarithmic singularity controls the nearest nodes from above.  Only a
pointwise upper bound for the remainder potential is needed: its negative
logarithmic spikes make the exponential smaller.  This one-sided nature is why
local counting plus spectral cancellation suffices, even though Route~B does not
provide the two-sided norm model obtained from a transverse holomorphic section.
Route~B therefore proves the same uniform bound without producing a holomorphic
section and without invoking quantitative transversality at all.

\subsection{Geometry of the two constructions}\label{subsec:intro-geometry}

The two routes carry different additional geometry, recorded in the next
theorem.  We write $\mathfrak g=\operatorname{genus}(X)$, fix a smooth
compatible metric $g$, put $g_N=Ng$, and for an ordered configuration
$W=(w_1,\ldots,w_N)\in X^N$ set
\[
 D_W:=w_1+\cdots+w_N,
 \qquad
 \nu_W:=\frac1N\sum_{j=1}^N\delta_{w_j},
 \qquad
 \mathcal R_g(W):=\sup_{x\in X}\min_{1\leq j\leq N}d_g(x,w_j).
\]

\begin{theorem}[Additional geometry of the constructed configurations]
\label{thm:intro-geometry}
All constants below are independent of $N$.
\begin{enumerate}[label=\textup{(\roman*)}]
\item \emph{Route A.}  There are constants
$r_0,R_{\mathrm{cov}},c_V,C_V>0$ and, for all sufficiently large $N$,
sections $s_N\in H^0(X,L^N)$ whose divisors
$D_N=Z_{s_N}=w_{1,N}+\cdots+w_{N,N}$ satisfy
\[
 2r_0\leq\min_{i\neq j}d_{g_N}(w_{i,N},w_{j,N}),
 \qquad
 \sup_{x\in X}d_{g_N}(x,D_N)\leq R_{\mathrm{cov}}.
\]
If $\mathcal V_{j,N}$ is the closed $g_N$-Voronoi cell of $w_{j,N}$, then
\[
 B_{g_N}(w_{j,N},r_0)\subset\mathcal V_{j,N}
 \subset B_{g_N}(w_{j,N},R_{\mathrm{cov}}),
 \qquad
 \frac{c_V}{N}\leq\mu(\mathcal V_{j,N})\leq\frac{C_V}{N}.
\]
In particular $\operatorname{diam}_{g_N}\mathcal V_{j,N}\leq2R_{\mathrm{cov}}$.
\item \emph{Route B.}  Independently, for all sufficiently large $N$ there are
configurations $\widetilde W_N\in X^N$ with
\[
 \mathcal R_g(\widetilde W_N)\leq C_XN^{-1/2},
 \qquad
 \#\bigl(\widetilde W_N\cap B_g(x,RN^{-1/2})\bigr)\leq C_X(1+R)^2
\]
for every $x\in X$ and $R\geq0$.  No separation assertion is made for the
spectral nodes, but their multiplicities are uniformly bounded; equivalently,
every atom of their empirical measure has mass at most $C_X/N$.  Moreover the
partition-to-node coupling gives directly
\[
 c_XN^{-1/2}\leq W_p(\nu_{\widetilde W_N},\mu)\leq C_XN^{-1/2},
 \qquad 1\leq p\leq\infty .
\]
\end{enumerate}
\end{theorem}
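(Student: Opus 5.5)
Part (i) follows from the quantitative transversality package of Route~A, and part (ii) from the design construction of Route~B. In both cases the work is to extract geometric consequences that the constructions already produce.

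For (i), I take the sections $s_N$ given by the Donaldson--Auroux type existence result, working in the rescaled charts of \Cref{app:rescaling}. These sections have uniformly bounded rescaled $C^2$ norm and satisfy \eqref{eq:intro-transverse}.

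\emph{Separation.} At a zero $p$ the transversality bound gives $|\nabla s_N(p)|\ge\eta$. The uniform $C^2$ bound then shows that, in a holomorphic trivialization on a $g_N$-ball of fixed radius, $s_N$ is a holomorphic function with derivative of modulus $\ge\eta/2$ on $B_{g_N}(p,2r_0)$, for $r_0$ small depending only on $\eta$ and the $C^2$ bound. Such a function is injective there, so $p$ is a simple zero and no other zero lies in this ball. This gives the separation bound $2r_0$.

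\emph{Covering radius.} I argue by contradiction. Suppose a ball $B=B_{g_N}(x,R)$ contains no zero. Then $u=\log|s_N|_{h^N}$ satisfies $\ddc u=-N\mu$ on $B$. In rescaled coordinates, $N\mu$ is comparable to Euclidean area, so $u+\tfrac{c}{4}|\zeta|^2$ is harmonic up to a bounded error. Bounded geometry lets me subtract a smooth local potential exactly. We also have $u\le C$ from the $C^0$ bound. The sub-mean-value property applied to the superharmonic $u$ then gives $u(x)\le C-cR^2$. But transversality forces $|s_N(x)|\ge\eta$ or $|\nabla s_N(x)|\ge\eta$. In the second case $s_N$ vanishes within distance $O(1)$ of $x$ by the linear model, contradicting zero-freeness once $R$ exceeds that distance. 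In the first case $u(x)\ge\log\eta$. Either way $R$ is bounded, and $R_{\mathrm{cov}}$ is fixed by this contradiction.

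\emph{Voronoi cells.} The inclusion $B(w_j,r_0)\subset\mathcal V_j$ holds because any point within $r_0$ of $w_j$ is closer to $w_j$ than to any other zero, by the $2r_0$-separation. The inclusion $\mathcal V_j\subset B(w_j,R_{\mathrm{cov}})$ holds because a point of $\mathcal V_j$ has $w_j$ as its nearest zero, at distance $\le R_{\mathrm{cov}}$. The mass bounds follow since $g_N$-balls of fixed radius have $\mu$-measure $\asymp 1/N$, uniformly by bounded geometry, and the diameter bound is immediate.

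For (ii), I take $\widetilde W_N$ to be the equal-weight Laplace design of \Cref{sec:route-b}. I expect it to be built by partitioning $X$ into $N$ cells of equal $\mu$-area and $g$-diameter $\le CN^{-1/2}$, with one node in each cell (Bondarenko--Radchenko--Viazovska style). Granting this, each node lies within $CN^{-1/2}$ of its cell, so every point is within $CN^{-1/2}$ of a node, which is the covering bound. For local counting, the nodes in $B(x,RN^{-1/2})$ have cells contained in $B(x,(R+C)N^{-1/2})$. Those cells have total area $\ge\#/N$, while the ball has area $\lesssim (R+C)^2/N$. This gives $\#\le C(1+R)^2$, and the case $R=0$ bounds multiplicities.

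For the Wasserstein bounds, coupling each cell's $\mu$-mass to its node transports at distance $\le CN^{-1/2}$. This gives $W_\infty$, and hence every $W_p$, at most $CN^{-1/2}$. For the lower bound it suffices to treat $W_1$, since $W_p\ge W_1$. Around each atom I use a $1$-Lipschitz test function $f(y)=\min_j d(y,w_j)$. It vanishes on the support of $\nu$, and $\int f\,d\mu\ge cN^{-1/2}$ because the union of the balls $B(w_j,\epsilon N^{-1/2})$ has measure $\le C\epsilon^2$, which is less than $1/2$ for small $\epsilon$. Kantorovich duality then gives $W_1\ge cN^{-1/2}$.

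The main obstacle is the covering-radius step in (i). The superharmonic comparison must be quantified in the rescaled charts with constants independent of $N$, and it must be combined correctly with the two branches of the transversality dichotomy.
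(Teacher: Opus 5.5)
\textbf{The covering-radius step in (i) fails.} The rest of your proposal is essentially the paper's argument. The separation and Voronoi inclusions come from the local zero model, as in the paper. Part (ii) likewise matches: the area-regular partition with $O(N^{-1/2})$ node displacement, counting by cell area, the cell-to-node coupling, and the test function $d(\cdot,S)$.

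In the covering step, off the zeros one has $\ddc\log|s_N|_{h^N}=-N\mu$. So in a rescaled chart $\Delta u\le-\kappa$ and $u$ is \emph{super}harmonic, and the mean-value inequality reads
\[
 \frac1{2\pi}\int_0^{2\pi}u(\rho e^{i\theta})\,d\theta\;\le\;u(0)-\frac{\kappa\rho^2}{4}.
\]
The center is where $u$ is \emph{large} relative to the surrounding circles. Combined with $u\le C$, this bounds the \emph{circle averages} by $C-\kappa\rho^2/4$, but says nothing about $u(x)$. Your inequality $u(x)\le C-cR^2$ is false. The model section $f\equiv1$ in a frame of weight $e^{-|\zeta|^2/2}$, so that $u=-|\zeta|^2/2$, is zero-free on every disk, satisfies $u\le0$, and has $u(0)=0$. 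These are the only inputs your inequality uses. Consequently, applying the transversality dichotomy at the center gives no contradiction: $|s_N(x)|\ge\eta/2$ is consistent with everything you have established.

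\textbf{The repair.} Apply transversality where the smallness actually occurs, namely on the circle.
\begin{enumerate}
\item Take $\rho=R/2$ in the corrected inequality. This gives a point $y$ at rescaled distance comparable to $R/2$ from $x$ with $\log|s_N(y)|\le C-\kappa R^2/16$.
\item For $R$ large, $|s_N(y)|$ is below the Rouch\'e threshold $c_0$ of \Cref{lem:local-zero}. Transversality then gives $|\nabla s_N(y)|\ge\eta/2$.
\item The quantitative Rouch\'e argument places a zero within $8|s_N(y)|/\eta<1$ of $y$, hence inside the supposedly zero-free ball. This is the contradiction.
\end{enumerate}
Equivalently, as in the paper, \Cref{lem:local-zero}\textup{(iv)} gives the \emph{two-sided} bound $|\log|s_N||\le C_1$ at every point at distance $\ge r_0$ from $D_N$. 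If $d(x,D_N)$ is large, this bound holds on the whole disk. Then the center value minus the boundary average is at most $2C_1$, contradicting the lower bound $\kappa A^2/4$. The essential input is a \emph{lower} bound for $|s_N|$ on the boundary circle, which the one-sided $C^0$ bound cannot supply.

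\textbf{Two smaller points.}
\begin{itemize}
\item Your second branch (``$|\nabla s_N(x)|\ge\eta$ forces a zero within $O(1)$'') is valid only once $|s_N(x)|<c_0$. A large derivative alone does not force a nearby zero, which is exactly why the paper proves \Cref{lem:local-zero}\textup{(iv)} by Rouch\'e.
\item In the separation step, $|f'|\ge\eta/2$ alone does not imply injectivity. Use instead the Taylor bound $|f(z)|\ge\frac{\eta}{2}|z-p|$ for $|z-p|<2r_0$.
\end{itemize}
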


Both constructions produce optimal-order equidistribution.  We record only the
immediate consequence here; the systematic theory of what a bounded
exponential Green energy forces is the subject of the companion paper
\cite{PaperB}.

\begin{corollary}[Optimal-order equidistribution]
\label{cor:intro-equidistribution}
Let $W_N\in X^N$ satisfy
\[
 \sup_N B_\beta(W_N;X)\leq K<\infty
\]
for one fixed $\beta>0$; in particular, let $W_N$ be either of the two families
of \Cref{thm:intro-geometry}.  Then
\[
 \left|\int_X\phi\,d(\nu_{W_N}-\mu)\right|
 \leq\frac{2(1+\log K)}{\beta N}\|\ddc\phi\|_\infty,
 \qquad \phi\in C^\infty(X),
\]
and
\[
 c_XN^{-1/2}\leq W_1(\nu_{W_N},\mu)\leq C_{X,K,\beta}N^{-1/2}.
\]
The exponent is optimal: every probability measure supported on at most $N$
points satisfies $W_1(\nu,\mu)\geq c_XN^{-1/2}$.
\end{corollary}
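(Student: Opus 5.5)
The plan is to derive the smooth-test-function estimate directly from the exponential moment bound by a duality/Jensen argument, and then to obtain the $W_1$ bounds by smoothing and by an elementary covering lower bound.

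\emph{Step 1: the test-function estimate.} Write $V_N(z):=\sum_{w\in W_N}U_X(z,w)$, so that $\ddc V_N=N(\nu_{W_N}-\mu)$ as currents. For $\phi\in C^\infty(X)$ real, integrate by parts:
\[
 N\int_X\phi\,d(\nu_{W_N}-\mu)=\int_X V_N\,\ddc\phi .
\]
Normalize $M:=\|\ddc\phi\|_\infty$ (sup of the density against $\mu$) and write $\ddc\phi=f\mu$ with $|f|\le M$ and $\int f\,d\mu=0$. Then $\int V_N f\,d\mu=\int V_N(f+M)\,d\mu$, because $\int V_N\,d\mu=0$. Now $(f+M)\,d\mu$ is a positive measure of mass $M$. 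Use the elementary inequality $t\le e^{t}/e$, or more precisely Jensen on the probability measure $\sigma=(f+M)\mu/M$, which has density at most $2$ with respect to $\mu$:
\[
 \beta\int V_N\,d\sigma\le\log\int e^{\beta V_N}d\sigma\le\log\bigl(2B_\beta(W_N;X)\bigr)\le \log 2+\log K .
\]
This gives $\int V_N\ddc\phi\le M(\log2+\log K)/\beta$. Applying the same bound to $-\phi$ controls the other sign. Dividing by $N$ yields the stated bound, and $\log 2\le 1$ absorbs into the constant $2(1+\log K)$. This is only an upper bound for $V_N$ in an averaged sense, which is exactly what the one-sided hypothesis provides.

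\emph{Step 2: the upper $W_1$ bound.} By Kantorovich duality, $W_1=\sup\int\psi\,d(\nu-\mu)$ over $1$-Lipschitz $\psi$. Mollify $\psi$ at scale $\varepsilon$, for instance with the heat semigroup $\psi_\varepsilon=e^{-\varepsilon^2\Delta}\psi$. Then $\|\psi-\psi_\varepsilon\|_\infty\le C\varepsilon$ and $\|\ddc\psi_\varepsilon\|_\infty\le C\varepsilon^{-1}$, the latter by the gradient bound on heat-kernel derivatives. Step 1 then gives
\[
 W_1\le 2C\varepsilon+C'(1+\log K)/(\beta N\varepsilon),
\]
and the choice $\varepsilon=N^{-1/2}$ gives $C_{X,K,\beta}N^{-1/2}$. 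The main technical point is the uniform estimate $\|\Delta e^{-t\Delta}\psi\|_\infty\lesssim t^{-1/2}\mathrm{Lip}(\psi)$ on a compact surface. I would take it from standard Gaussian heat-kernel gradient bounds, or avoid it by using a partition-of-unity convolution in finitely many charts.

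\emph{Step 3: the lower bound.} For any $\nu$ supported on at most $N$ points $p_i$, the set $\bigcup_i B_g(p_i,rN^{-1/2})$ has $\mu$-measure at most $C r^2$, since $\mu$ is comparable to the area of $g$. Choosing $r$ so that $Cr^2\le1/2$, at least half of the mass of $\mu$ must be transported a distance of at least $rN^{-1/2}$. Hence $W_1(\nu,\mu)\ge \tfrac r2N^{-1/2}$. Finally, the two families of \Cref{thm:intro-geometry} satisfy the hypothesis with $K=e^{\beta C_{X,\mu}}$ by \Cref{thm:main}.
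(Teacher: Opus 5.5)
Your proof is correct, and it departs from the paper only in Step 1.

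\textbf{Steps 2 and 3.} These are essentially the paper's argument. Step 2 is heat regularization with $\|\phi-\phi_t\|_\infty\le C\sqrt t$ and $\|\ddc\phi_t\|_\infty\le Ct^{-1/2}$ at $t=N^{-1}$ (your $\varepsilon^2$). Step 3 is the covering bound $\mu\bigl(\bigcup_iB_g(p_i,r_N)\bigr)\le 1/2$, which you phrase through couplings rather than by testing Kantorovich--Rubinstein duality with $d_g(\cdot,S)$.

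\textbf{Step 1, the paper's route.} The paper first proves the $L^1$ bound $\|V_{W_N}\|_{L^1(\mu)}\le 2(1+\log K)/\beta$:
\begin{itemize}
\item Markov's inequality gives $\mu\{V>(\log K)/\beta+t\}\le e^{-\beta t}$;
\item the layer-cake formula then bounds $\int V_+\,d\mu$ by $(1+\log K)/\beta$;
\item the mean-zero identity gives $\int V_-\,d\mu=\int V_+\,d\mu$.
\end{itemize}
The discrepancy estimate is then just $\bigl|\int V\,\ddc\phi\bigr|\le\|V\|_{L^1}\|\ddc\phi\|_\infty$.

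\textbf{Step 1, your route.} You use the mean-zero property of both $V_N$ and $\ddc\phi$ to replace the signed density $f$ by $f+M\ge0$. You then apply Jensen against the probability measure $\sigma$ with $d\sigma/d\mu\le2$. In effect this is the Gibbs variational inequality with relative entropy at most $\log 2$.

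\textbf{Comparison.} Your route bypasses the $L^1$ norm entirely. It also yields the sharper constant $(\log2+\log K)/\beta$; absorbing $\log 2$ into $2(1+\log K)$ uses $K\ge1$, which is Jensen. The paper's route buys the intermediate bound $\|V_{W_N}\|_{L^1}\lesssim(1+\log K)/\beta$ as a reusable object. \Cref{thm:packing-floor} reruns the same heat-smoothing argument with $\|V_W\|_{L^1}$ as its only input, and \Cref{thm:equidistribution} is likewise phrased through an $L^1$ bound.

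\textbf{A small attribution point.} The Route B family satisfies the hypothesis because of \Cref{thm:route-b}, i.e.\ the bound \eqref{eq:route-b-upper-potential}. It does not follow from \Cref{thm:main}, whose statement only asserts the existence of some family of configurations.
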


\subsection{Status of the transversality input}
\label{subsec:transversality-status}

The main argument uses the standard integrable quantitative-transversality
theorem in \Cref{sec:transversality}, following
\cite{Donaldson,Auroux,MohsenAFST,MohsenLimit}.  This keeps the Green-packing
deduction separate from the technical construction of transverse sections, and
\Cref{prop:transversality-to-packing} isolates exactly which properties of the
holomorphic input are used.  A curve-specific derivation of the same input is
developed separately in \cite{PaperD}, independently of the general theorem and
of Bergman-kernel asymptotics; its only analytic input is the weighted $L^2$
existence theorem for $\bar\partial$.  Nothing in the present paper depends on
which of the two derivations the reader prefers.

\subsection{Relation to the companion papers}\label{subsec:companions}

The material of this paper was originally part of a single long manuscript
which also developed the consequences of a bounded exponential Green energy
and a condition theory for zero divisors.  Those two developments are
logically independent of the constructions given here and are presented
separately: \cite{PaperB} starts from the hypothesis
$B_\beta(W_N;X)\leq K$ and derives quantization, discrepancy, and the
variational theory of minimizers, while \cite{PaperC} develops the intrinsic
Shub--Smale condition theory of the divisor map
$\bP H^0(X,L^N)\dashrightarrow\operatorname{Sym}^N(X)$.  The present paper is
used in those papers only through \Cref{thm:main} and
\Cref{thm:intro-geometry}, which guarantee that their hypotheses are
non-vacuous and uniformly attainable.

\subsection{Organization}\label{subsec:organization}

\Cref{sec:green-sections} proves the exact Green--Poincar\'e--Lelong identity.
\Cref{sec:transversality,sec:local-lemma,sec:zero-geometry,sec:log-integrability}
turn the standard holomorphic input into separated zeros and uniform
logarithmic integrability, and \Cref{sec:main-proof} completes Route A.
\Cref{sec:route-b} gives the independent spectral-design proof; a reader
interested only in \Cref{thm:main} may follow either route.
\Cref{sec:consequences} derives \Cref{cor:intro-equidistribution} and compares
the two constructions.  \Cref{app:rescaling} fixes the rescaled-coordinate
conventions used in the local arguments.

\section{Green kernels and divisor sections}\label{sec:green-sections}

\subsection{Joint regularity at the diagonal}

The expansion \eqref{eq:green-local} has a standard parameter-dependent form:
in a holomorphic coordinate $\xi$ and on a relatively compact subdisk,
\begin{equation}\label{eq:green-joint-local}
 U_X\bigl(\xi^{-1}(x),\xi^{-1}(q)\bigr)=\log|x-q|+R_\xi(x,q),
\end{equation}
with $R_\xi$ smooth in \emph{both} variables and with all finite-order norms
bounded uniformly over a fixed finite atlas.  Two consequences follow:
$e^{\gamma U_X}$ extends continuously by zero across the diagonal for every
$\gamma>0$, and symmetry gives $\ddc_wU_X(z,w)=\delta_z-\mu$ in the pole
variable.  The proof is
a removable-singularity argument followed by elliptic regularity with
parameters, and we do not repeat it: see
\cite[Lemma~2.1]{PaperB}, where joint regularity is needed because the poles are
moved in variational arguments.  In the present paper the poles are fixed, and
only the one-variable expansion \eqref{eq:green-local} is used.

\subsection{A degree-one positive line bundle}

Every compact Riemann surface has holomorphic line bundles of degree one;
for example, $\cO(p)$ has degree one for any $p\in X$.  We need a metric
whose curvature is the prescribed form $\mu$.

\begin{lemma}\label{lem:metric-curvature}
There is a degree-one holomorphic line bundle $L\to X$ and a smooth
Hermitian metric $h$ on $L$ such that
\[
 c_1(L,h)=\mu.
\]
\end{lemma}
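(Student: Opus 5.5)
We take $L=\cO(p)$ for a fixed point $p\in X$; this bundle has degree one. Choose any smooth Hermitian metric $h_0$ on $L$. Its curvature form $c_1(L,h_0)$ is a smooth real $(1,1)$-form, and since $\deg L=1$,
\[
 \int_X c_1(L,h_0)=1=\int_X d\mu .
\]
So $\mu-c_1(L,h_0)$ is a smooth real $(1,1)$-form with total integral zero. The plan is to correct the metric by a conformal factor: set $h=h_0e^{-2\phi}$ for a smooth real function $\phi$ still to be chosen. With the conventions of \Cref{rem:conventions}, $c_1(L,h)=-\ddc\log|s|_h$ locally for a nonvanishing holomorphic frame $s$. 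Hence
\[
 c_1(L,h)=c_1(L,h_0)+\ddc\phi ,
\]
and the task reduces to solving
\[
 \ddc\phi=\mu-c_1(L,h_0).
\]

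This is the $\ddc$-lemma on a compact Riemann surface. Any smooth top-degree form $\omega$ with $\int_X\omega=0$ is $\ddc$-exact. To see it, write $\omega=f\mu$ with $f$ smooth and $\int_X f\,d\mu=0$. The identity $\ddc\phi=-\frac1{2\pi}(\Delta\phi)\mu$ turns the equation into
\[
 \Delta\phi=-2\pi f .
\]
On a connected compact surface, $\Delta$ is self-adjoint, elliptic, and has kernel consisting of the constants. By Fredholm theory the equation is solvable exactly when $f$ is $L^2(\mu)$-orthogonal to the constants, which is our mean-zero condition. Elliptic regularity then makes $\phi$ smooth.

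There is an equivalent route through the Green kernel already introduced: define
\[
 \phi(z)=\int_X U_X(z,w)\,\omega(w).
\]
Then \eqref{eq:green-normalization} gives $\ddc\phi=\omega-\bigl(\int_X\omega\bigr)\mu=\omega$. Its smoothness follows from the joint expansion \eqref{eq:green-joint-local}.

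The only step needing care is this solvability and regularity. It is standard Hodge theory, so no real obstacle is expected; the remaining points are just consistency checks on normalization. The sign and factor conventions for $\ddc$ and $c_1$ must match, so that $c_1(L,h)$ integrates to $\deg L$. The freedom in $\phi$ is only an additive constant, which rescales $h$ by a constant and does not affect the curvature. Finally, $c_1(L,h)=\mu>0$ means $(L,h)$ is positive, as Route~A requires.
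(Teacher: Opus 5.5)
Your proposal is correct and takes essentially the same route as the paper: fix a degree-one bundle with an arbitrary metric $h_0$, note that $\mu-c_1(L,h_0)$ has integral zero, solve $\ddc\phi=\mu-c_1(L,h_0)$, and twist the metric by $e^{-2\phi}$ (this is the paper's $|e|_h=e^{-\varphi}|e|_{h_0}$). Your Fredholm and elliptic-regularity argument, and the alternative Green-kernel formula for $\phi$, spell out the solvability step that the paper simply cites as the Poisson equation ($\partial\bar\partial$-lemma).
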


\begin{proof}
Choose any degree-one holomorphic line bundle $L$ and any smooth Hermitian
metric $h_0$.  Both $c_1(L,h_0)$ and $\mu$ have integral one.  Hence their
difference has integral zero.  On a compact K\"ahler curve, solving the
scalar Poisson equation (equivalently, applying the
$\partial\bar\partial$-lemma) gives a smooth real function $\varphi$ with
\[
 \mu-c_1(L,h_0)=\ddc\varphi.
\]
With the convention $|e|_h=e^{-\varphi}|e|_{h_0}$ in a local holomorphic
frame, $-\ddc\log|e|_h$ differs from
$-\ddc\log|e|_{h_0}$ by $\ddc\varphi$.  Thus the resulting metric $h$ has
curvature $\mu$.
\end{proof}

Fix such $(L,h)$.  The tensor power $(L^N,h^N)$ has curvature $N\mu$ and
degree $N$.

\subsection{Poincar\'e--Lelong and the exact potential identity}

Let $s\in H^0(X,L^N)$ be nonzero.  Write its zero divisor, including
multiplicities, as
\[
 Z_s=\sum_{j=1}^{N}w_j.
\]
The total multiplicity is $N$ because
$\deg(L^N)=\int_Xc_1(L^N,h^N)=N\int_X\mu=N$.
The Poincar\'e--Lelong formula, in the convention of
\Cref{rem:conventions} and as in \cite[Ch.~III]{Demailly} or
\cite[Ch.~1]{GriffithsHarris}, is
\begin{equation}\label{eq:PL}
 \ddc\log|s|_{h^N}=[Z_s]-N\mu.
\end{equation}
Indeed, in a local holomorphic frame $e$ write $s=f e^N$.  Then
$\log|s|_{h^N}=\log|f|+N\log|e|_h$.  The scalar
Poincar\'e--Lelong formula gives $\ddc\log|f|=[Z_f]$, while the curvature
convention gives $-\ddc\log|e|_h=c_1(L,h)=\mu$.  Adding the two identities
proves \eqref{eq:PL} locally, and both sides patch globally.
On the other hand, by \eqref{eq:green-normalization},
\begin{equation}\label{eq:green-divisor-ddc}
 \ddc\sum_{j=1}^{N}U_X(\,\cdot\,,w_j)=[Z_s]-N\mu.
\end{equation}
Here $\cdot$ denotes the free first variable of the Green kernel, and the
identity is an equality of distributions (equivalently, of signed
measures after applying $\ddc$).

\begin{proposition}[Divisor potential identity]\label{prop:potential-identity}
For every nonzero $s\in H^0(X,L^N)$,
\begin{equation}\label{eq:potential-identity}
 \sum_{j=1}^{N}U_X(z,w_j)
 =\log|s(z)|_{h^N}
 -\int_X\log|s|_{h^N}d\mu.
\end{equation}
\end{proposition}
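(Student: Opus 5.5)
The plan is to compare the two distributional identities \eqref{eq:PL} and \eqref{eq:green-divisor-ddc}. Their right-hand sides agree, so the difference
\[
 F(z):=\log|s(z)|_{h^N}-\sum_{j=1}^{N}U_X(z,w_j)
\]
satisfies $\ddc F=0$ in the sense of distributions on $X$. I will show in turn that $F$ is in $L^1$, that it is constant, and that the constant is fixed by integrating against $\mu$.

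First I need $F$ to be a genuine locally integrable function, not just a formal difference. Both terms are smooth away from the finite set $\operatorname{supp}Z_s$, and both have only logarithmic singularities there.
\begin{itemize}
\item In a local frame, $\log|s|_{h^N}=\log|f|+N\log|e|_h$, and $\log|f|$ is locally integrable since $f\not\equiv0$.
\item Each $U_X(\cdot,w_j)$ is locally integrable by \eqref{eq:green-local}.
\end{itemize}
Hence $F\in L^1(X,\mu)$, and $\ddc F=0$ holds as distributions.

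The key step is regularity. Weyl's lemma says a distribution annihilated by the Laplacian is a smooth harmonic function. Applying it locally through \Cref{rem:conventions}, $\ddc F=0$ becomes $\Delta F=0$ in local charts, so $F$ agrees almost everywhere with a smooth harmonic function on the compact connected surface $X$. The maximum principle then makes $F$ constant.

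I can also see this concretely near a zero $p$ of multiplicity $m$ in a coordinate $\xi$ centered at $p$.
\begin{itemize}
\item Writing $f=\xi^m u$ with $u(0)\neq0$, we get $\log|s|_{h^N}=m\log|\xi|+\text{smooth}$.
\item The $m$ poles at $p$ contribute $m\log|\xi|+O(1)$, with a harmonic-plus-smooth remainder coming from the regular part in \eqref{eq:green-local}.
\end{itemize}
The logarithms cancel, leaving a bounded function. A bounded function that is harmonic off a point extends harmonically across it, by the removable singularity theorem. Either argument gives $F\equiv c$.

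Finally I integrate against $\mu$. The normalization \eqref{eq:green-normalization} gives $\int_X\sum_jU_X(z,w_j)\,d\mu(z)=0$, so
\[
 c=\int_X\log|s|_{h^N}\,d\mu.
\]
This integral is finite by the local integrability established above, and substituting it back gives \eqref{eq:potential-identity}.

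The only delicate point is justifying that distributional $\ddc$-closedness forces constancy, that is, that no mass hides at the zeros. The explicit multiplicity cancellation together with the removable singularity theorem settles this cleanly, and I would present that argument and keep Weyl's lemma as a remark.
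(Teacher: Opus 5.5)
Your proposal is correct and follows the paper's proof: the paper likewise uses \eqref{eq:PL} and \eqref{eq:green-divisor-ddc} to see that the difference is distributionally harmonic, applies Weyl's lemma and the maximum principle to get a constant, and identifies that constant by integrating against $\mu$ with the mean-zero normalization. Your alternative is also valid: multiplicity cancellation near each zero, then the removable singularity theorem for bounded harmonic functions. It needs only the $O(1)$ bound in \eqref{eq:green-local}, not smoothness of the remainder, so it is a more hands-on variant of the same argument rather than a genuinely different route.
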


\begin{proof}
By \eqref{eq:PL} and \eqref{eq:green-divisor-ddc}, the difference of the
two sides before subtracting the integral is a distributional harmonic
function.  Weyl's lemma makes it smooth, and the maximum principle on the
connected compact surface makes it constant.  Both sides are locally
integrable because $\log|z|\in L^1_{\rm loc}(\C)$.  The Green sum has
integral zero by \eqref{eq:green-normalization}; integrating the constant
identity against the probability form $\mu$ therefore identifies the
constant as $-\int_X\log|s|_{h^N}\,d\mu$ and proves
\eqref{eq:potential-identity}.
\end{proof}

Multiplying $s$ by a nonzero scalar changes both logarithmic terms on the
right of \eqref{eq:potential-identity} by the same constant, so the identity
depends only on the divisor.


\section{Quantitative zero-section transversality}
\label{sec:transversality}

Choose a Riemannian metric $g$ compatible with the complex structure and put
$g_N=Ng$.  If $\nabla$ is the Chern connection of $(L^N,h^N)$, define
\begin{equation}\label{eq:Ck-rescaled}
 \|s\|_{C^r_N}:=
 \max_{0\leq j\leq r}\sup_{x\in X}
 |\nabla^js(x)|_{g_N,h^N}.
\end{equation}
A section is called uniformly $\eta$-transverse to the zero section when
\begin{equation}\label{eq:eta-transverse}
 |s(x)|_{h^N}+|\nabla s(x)|_{g_N,h^N}\geq\eta
 \qquad(x\in X).
\end{equation}
Within this paper we abbreviate this condition to
\emph{$\eta$-zero-section transversality}.  It means that the section meets
the zero section with a derivative bounded away from zero wherever its
value is small.  This terminology distinguishes the scalar condition used
here from the more general relative transversality problems in the
Donaldson--Auroux framework.

We use the integrable form of the Donaldson--Auroux quantitative
transversality theorem, specialized to uniform transversality to the zero
section.  Mohsen's formulation is particularly convenient
because it records both compactness in rescaled charts and transversality of
all limit sections.

\begin{theorem}[Transversality input; Donaldson--Auroux--Mohsen]
\label{thm:curve-transversality}
There are constants $C_r<\infty$, $0\leq r\leq3$, and $\eta>0$ such that,
for every sufficiently large $N$, some holomorphic section
$s_N\in H^0(X,L^N)$ satisfies
\begin{equation}\label{eq:our-uniform-estimates}
 \|s_N\|_{C^r_N}\leq C_r\quad(0\leq r\leq3),
 \qquad
 |s_N|_{h^N}+|\nabla s_N|_{g_N,h^N}\geq\eta\quad\text{on }X.
\end{equation}
\end{theorem}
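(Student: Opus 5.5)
This statement is the specialization to a compact curve of the integrable Donaldson--Auroux theorem, and I would prove it by citing Mohsen's formulation rather than rebuilding Donaldson's globalization from scratch. There are three steps: build localized peak sections with uniform rescaled bounds; solve the local nondegeneracy problem; globalize it.

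\emph{Peak sections.} For large $N$ I would use Hörmander's weighted $L^2$ estimate for $\bar\partial$ on $(L^N,h^N)$, whose curvature is $N\mu>0$. At each point $x$ of a $g_N$-unit-spaced net this gives holomorphic sections $\sigma_{x}$ of $L^N$ such that in rescaled coordinates centred at $x$ (where $h^N\approx e^{-\pi|z|^2}$ up to $O(N^{-1/2})$ errors) one has
\[
 |\nabla^j\sigma_x(y)|_{g_N,h^N}\leq C_j e^{-c\,d_{g_N}(x,y)^2},\qquad 0\le j\le 3 .
\]
The construction cuts off the model Gaussian and corrects it with the $\bar\partial$ solution. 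Interior elliptic estimates in the bounded-geometry charts of \Cref{app:rescaling} upgrade the $L^2$ bounds to $C^3$ bounds. Any sum $\sum_x a_x\sigma_x$ with $|a_x|\le1$ then obeys $\|\cdot\|_{C^3_N}\le C_3$ uniformly in $N$, because the Gaussian tails sum over a unit net.

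\emph{Local transversality.} In one variable the nondegeneracy problem is elementary. Fix an arbitrary holomorphic $f$ on a rescaled disk with bounded $C^3$ norm, and perturb it to $f-w\,\sigma_x$ with $|w|\le\delta$. The map $f/\sigma_x$ is holomorphic from a two-real-dimensional domain to $\C$, so its set of critical values has small area. Hence most $w\in\C$ with $|w|\le\delta$ are at distance at least $\delta^{C}$ from that set. Donaldson's quantitative Sard lemma (via Yomdin's estimate for polynomial approximations) makes this effective: there is $w$ with $|w|\le\delta$ such that $f-w\sigma_x$ is $\eta(\delta)$-transverse on a ball of fixed radius, with $\eta(\delta)\ge\delta\,(\log\delta^{-1})^{-p}$.

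\emph{Globalization.} I would colour the net with a number of colours depending only on a fixed large separation distance $D$ (bounded geometry). Then I would perturb colour by colour, using coefficients of size $\delta_k$ chosen inductively. Gaussian decay makes the cross-influence between same-coloured points $O(e^{-cD^2})$. Transversality is an open condition with explicit margins, so the perturbations at later stages, of size $\ll\eta_{k}$, preserve what earlier stages achieved. The main obstacle is exactly this bookkeeping. One has to choose $D$ and the sequence $\delta_k$ so that the final $\eta$ is positive, independent of $N$, and the total perturbation stays bounded in $C^3_N$. The number of colours is $N$-independent and $\eta(\delta)$ is only logarithmically worse than $\delta$, so this succeeds. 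This is precisely the content of \cite{Donaldson,Auroux}, specialized to the zero section. \cite{MohsenAFST,MohsenLimit} add the compactness of rescaled limits that gives the uniform $C^r$ bounds for $r\le3$ directly. In the paper I would therefore state this theorem as a citation, with the sketch above indicating why the curve case fits the integrable hypotheses.
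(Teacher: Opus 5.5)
Your sketch is sound, but it follows a genuinely different route from the paper. You rebuild the Donaldson--Auroux mechanism in the holomorphic curve setting. Gaussian-decaying holomorphic peak sections give the $C^3_N$ bounds by summing tails over a unit net. A quantitative Sard lemma applied to $f/\sigma_x$ gives local transversality. Colour-by-colour perturbation then produces $\eta$ explicitly. The paper reproduces none of this. It takes from Mohsen's Theorem~5 and its proof the holomorphic sections, the bounds $\|s_N\|_{C^r_N}=O_r(1)$, renormalizability, and transversality of every recentred limit section. Its only argument is a compactness step: if no uniform $\eta$ existed, recentring at points $x_N$ where $|s_N|+|\nabla s_N|\to0$ and extracting a $C^1_{\mathrm{loc}}$-convergent subsequence would give a limit section whose value and derivative both vanish at the origin, contradicting limit transversality. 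Your route is self-contained and effective, and close in spirit to the curve-specific derivation the paper attributes to \cite{PaperD}. The paper's route is far shorter but gives an ineffective $\eta$. You also have the role of Mohsen's compactness backwards: compactness of rescaled limits follows from the uniform $C^r$ bounds rather than supplying them, and in the paper its job is to produce $\eta$. Two points would need care if you wrote your sketch out. First, the $\bar\partial$-correction of the cut-off Gaussian must itself decay, since otherwise $\sum_x a_x\sigma_x$ over $\asymp N$ centres is not uniformly bounded. You can arrange this by adding a weight $e^{\epsilon N\rho_x}$ in H\"ormander's estimate, with $\rho_x\asymp d_g(x,\cdot)^2$ and $\epsilon$ small, or by taking a cutoff radius of order $\sqrt{\log N}$. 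Second, the globalization does not close merely because the number of colours is independent of $N$. That number is $\asymp D^2$, and $\log\delta_m^{-1}$ grows like $m\log m$ after $m$ stages. $D$ can be chosen non-circularly only because $e^{-cD^2}\ll(D^2\log D)^{-p}$ for large $D$, which is exactly where the polylogarithmic (rather than polynomial) loss in $\eta(\delta)$ is used.
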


This is the only external input to the paper, and it is quoted rather than
reproved.  A curve-specific construction of the same sections, independent of the
general theorem, is carried out in \cite{PaperD}; the statement below is the
interface between the two, and \Cref{prop:transversality-to-packing} shows that
nothing else about the sections is used.

\begin{proof}
The cited theorem supplies the sections and the rescaled derivative bounds; what
requires an argument is the passage from transversality of each \emph{limit}
section to a single constant $\eta$ valid for all $N$ at once, and that is where
compactness of $X$ enters.

Apply \cite[Theorem~5 and its proof]{MohsenLimit} to the positive
Hermitian line bundle $(L,h)$.  In the integrable case the resulting
sections are holomorphic.  The estimates displayed in Mohsen's proof give,
for every fixed $r$, a bound
$\|s_N\|_{C^r_N}=O_r(1)$ in the renormalized K\"ahler metric determined by
$\mu$.  Any two fixed metrics on the compact surface are uniformly
equivalent, so the same estimates hold, after changing constants, for
$g_N=Ng$.

For completeness, the uniform lower bound follows from the limit-section
formulation.  If no $\eta>0$ worked, one could choose points $x_N$ at which
the value-plus-first-jet norm tends to zero.  Recenter the rescaled charts
at $x_N$.  Renormalizability gives a subsequence converging in $C^1$ on
compact sets to a limit section whose value and derivative both vanish at
the origin.  This contradicts the transversality of every limit section in
Mohsen's theorem.  Hence there is a constant $\eta>0$, independent of $N$,
for which
\[
 \inf_{x\in X}\bigl(|s_N(x)|_{h^N}
       +|\nabla s_N(x)|_{g_N,h^N}\bigr)\geq\eta.
\]
Taking $0\leq r\leq3$ gives \eqref{eq:our-uniform-estimates}.
\end{proof}

\begin{remark}[Source and norm translation]\label{rem:mohsen-translation}
In the accessible arXiv version of \cite{MohsenLimit}, the integrable
holomorphic statement is Theorem~5; the theorem number should be adjusted if a
finalized version with different numbering is cited.  The derivative estimates
used above occur in its proof rather than in the statement, which is why the
citation is to the theorem \emph{and its proof}.  Three features of the source
are used, and each should be checked against the exact version placed in the
bibliography.  First, in the integrable case the sections produced are
holomorphic.  Second, the theorem is formulated through renormalized charts,
and renormalizability gives compactness of all recentered subsequences in the
$C^1$ topology on compact sets, which is the derivative order used in the
contradiction argument above; the higher orders $r\leq3$ enter only through the
displayed $C^r_N$ bounds.  Third, every limit section obtained from such a
recentering is transverse, which is what forbids a simultaneous vanishing of
value and first derivative at the origin.  The passage from Mohsen's
renormalized K\"ahler metric to the fixed choice $g_N=Ng$ changes only
constants: the underlying metrics are fixed and, $X$ being compact, uniformly
equivalent.  Compactness is also what converts the pointwise transversality of
each limit section into the single constant $\eta>0$, uniform in $N$ and in the
base point, asserted in \eqref{eq:our-uniform-estimates}.
\end{remark}

\begin{remark}[Meaning of transversality in this paper]
In the general Donaldson--Auroux framework, quantitative transversality may
be imposed relative to a prescribed submanifold or stratification.  The
specialization used here is the elementary curve case in which the target
is the zero section of $L^N$.  Thus the only quantitative condition carried
forward from \Cref{thm:curve-transversality} is exactly
\eqref{eq:eta-transverse}; it forces all zeros to be simple and uniformly
nondegenerate in the $g_N$ scale.
\end{remark}

\begin{remark}[A curve-specific alternative]\label{rem:curve-specific}
\Cref{thm:curve-transversality} is the only transversality input in the main
argument, and the only place where an external construction of sections is
used.  Readers who prefer not to invoke the general theorem may instead use the
curve-specific derivation of \cite{PaperD}.  That derivation is independent of the
general quantitative-transversality theorem and of Bergman-kernel asymptotics: it
builds its peak sections from a cutoff Bargmann--Fock model corrected by a
weighted $L^2$ solution of the $\bar\partial$ equation, and it proves the tube
estimate it needs from complex Crofton area and the Lelong--Jensen density bound.
It uses two complex local parameters, polynomial approximation, and a
finite-color correction.  Nothing in the present paper depends on its internal
details: by
\Cref{prop:transversality-to-packing}, any construction supplying the two
displayed hypotheses of \eqref{eq:our-uniform-estimates} yields the
Green-packing conclusion unchanged.
\end{remark}


\section{A quantitative local zero lemma}\label{sec:local-lemma}

We now prove the local analytic result that turns
\eqref{eq:our-uniform-estimates} into controlled zero geometry.  It is useful
to state it for ordinary holomorphic functions.

\begin{lemma}[Uniform simple-zero model]\label{lem:local-zero}
For every $M\geq1$ and $\eta>0$ there are constants
$r=r(M,\eta)>0$, $a=a(M,\eta)>0$, and $A=A(M)>0$ with the following
property.  Let $f$ be holomorphic on the Euclidean disk $D(0,2)$ and assume
\begin{equation}\label{eq:local-assumptions}
 \sup_{D(0,2)}\bigl(|f|+|f'|+|f''|\bigr)\leq M,
 \qquad |f(z)|+|f'(z)|\geq\eta\quad(z\in D(0,2)).
\end{equation}
Then:
\begin{enumerate}[label=\textup{(\roman*)}]
\item every zero in $D(0,1)$ is simple;
\item distinct zeros in $D(0,1)$ have distance at least $2r$;
\item if $p$ is a zero and $|z-p|<r$, then
\[
 a|z-p|\leq|f(z)|\leq A|z-p|;
\]
\item if $z\in D(0,1)$ is at distance at least $r$ from the zero set of
$f$, then $|f(z)|\geq ar/4$.
\end{enumerate}
\end{lemma}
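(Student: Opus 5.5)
The argument is elementary: Taylor expansion at a zero, with the transversality hypothesis supplying a lower bound on $|f'|$ at every zero. Fix a zero $p\in D(0,1)$. Then $f(p)=0$, so \eqref{eq:local-assumptions} gives $|f'(p)|\geq\eta$, and the zero is simple. This is (i).

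Since $D(p,1)\subset D(0,2)$, Taylor's formula with the bound $|f''|\leq M$ gives, for $|z-p|<1$,
\[
 \bigl|f(z)-f'(p)(z-p)\bigr|\leq \tfrac{M}{2}|z-p|^2 .
\]
Choose $r=\min\{1/4,\ \eta/(2M)\}$. For $|z-p|<2r$ this yields
\[
 |f(z)|\geq \eta|z-p|-\tfrac{M}{2}|z-p|^2\geq \tfrac{\eta}{2}|z-p| .
\]
Indeed $\tfrac M2|z-p|\leq Mr\leq\eta/2$. In particular $f$ has no other zero in $D(p,2r)$, which gives the separation (ii). The same expansion gives
\[
 |f(z)|\leq M|z-p|+\tfrac M2|z-p|^2\leq 2M|z-p| .
\]
So (iii) holds with $a=\eta/2$ and $A=2M$; the upper constant depends only on $M$, as the statement requires. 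Alternatively, the mean value inequality with $|f'|\leq M$ gives the upper bound directly with $A=M$.

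For (iv), take $z\in D(0,1)$ with $\operatorname{dist}(z,Z_f)\geq r$, and split into two cases.
\begin{itemize}
\item[\textit{Case 1:} $|f(z)|\geq ar/4$.] There is nothing to prove.
\item[\textit{Case 2:} $|f(z)|<ar/4=\eta r/8$.] We derive a contradiction. Transversality gives $|f'(z)|\geq\eta-\eta r/8\geq \tfrac{3\eta}{4}$. A Newton/Rouché argument then produces a zero near $z$. Compare $f$ on the circle $|\zeta-z|=\rho$ with its linearization
\[
 \ell(\zeta)=f(z)+f'(z)(\zeta-z).
\]
The linearization $\ell$ has its unique zero at distance $|f(z)|/|f'(z)|<r/6$ from $z$. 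On the circle $|\zeta-z|=\rho$ with $\rho=r/2$,
\[
 |\ell(\zeta)|\geq \tfrac{3\eta}{4}\rho-\tfrac{\eta r}{8}=\tfrac{\eta r}{4},
 \qquad
 |f-\ell|\leq \tfrac{M}{2}\rho^2=\tfrac{Mr^2}{8}\leq\tfrac{\eta r}{16},
\]
using $Mr\leq\eta/2$. The disk $D(z,\rho)$ lies in $D(0,2)$, so Rouché's theorem applies and gives a zero of $f$ in $D(z,r/2)$. This contradicts $\operatorname{dist}(z,Z_f)\geq r$.
\end{itemize}
Hence $|f(z)|\geq ar/4$. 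The zero produced in Case 2 might lie outside $D(0,1)$, but it still lies in $D(0,2)$. This is harmless as long as $\operatorname{dist}(z,Z_f)$ is measured over all zeros of $f$ in $D(0,2)$, which is the natural reading of (iv).

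The main obstacle is bookkeeping, namely one choice of $r$ that serves every part simultaneously. Taking $r=\min\{1/4,\eta/(2M)\}$ and $a=\eta/2$ appears to suffice, with the inequalities $Mr\leq\eta/2$ and $r\leq1/4$ used throughout. One should also check that $r\leq 1/4$ keeps all discs $D(p,2r)$ and $D(z,r/2)$ inside $D(0,2)$, so that the hypotheses of \eqref{eq:local-assumptions} apply wherever they are invoked.
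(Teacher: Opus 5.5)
Your proof is correct and follows the paper's argument. Like the paper, you use Taylor expansion at a zero together with $|f'(p)|\geq\eta$ for (i)--(iii), and a Rouch\'e comparison of $f$ with its linearization at $z$ for (iv); the only difference is that you run Rouch\'e on the fixed circle $|\zeta-z|=r/2$, which gives (iv) directly with $a=\eta/2$, whereas the paper uses the adaptive radius $\rho=4|f(z)|/|f'(z)|$ to obtain the quantitative bound $\dist(z,Z(f))\leq 8|f(z)|/\eta$ and then shrinks $a$ to $\min\{\eta/2,4c_0/r\}$.
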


\begin{proof}
At a zero $p$, \eqref{eq:local-assumptions} gives $|f'(p)|\geq\eta$, proving
(i).  Taylor's formula and the bound on $f''$ give, whenever the segment is
contained in $D(0,2)$,
\begin{equation}\label{eq:taylor-zero}
 f(z)=f'(p)(z-p)+R_p(z),
 \qquad |R_p(z)|\leq\frac{M}{2}|z-p|^2.
\end{equation}
Choose
\[
 0<r<\min\left\{\frac14,\frac{\eta}{4M}\right\}.
\]
For $|z-p|<2r$, \eqref{eq:taylor-zero} gives
\[
 \frac{\eta}{2}|z-p|\leq|f(z)|\leq\frac{3M}{2}|z-p|.
\]
This proves (iii), with $a=\eta/2$ and $A=3M/2$.  It also proves (ii): a
second zero $q$ with $0<|q-p|<2r$ would contradict the lower bound.

It remains to prove (iv), including the case where $f(z)$ is small but $z$
is not already known to be near a zero.  Set $b=f'(z)$.  If
$|f(z)|<\eta/2$, then $|b|\geq\eta/2$.  Put
\[
 \rho:=\frac{4|f(z)|}{|b|}.
\]
If $|f(z)|$ is small enough that $\rho<r$, compare on $|\zeta-z|=\rho$
the affine function
\[
 \ell(\zeta)=f(z)+b(\zeta-z)
\]
with $f$.  On that circle,
\[
 |\ell(\zeta)|\geq |b|\rho-|f(z)|=3|f(z)|,
\]
whereas Taylor's theorem gives
\[
 |f(\zeta)-\ell(\zeta)|\leq\frac M2\rho^2.
\]
Since $|b|\geq\eta/2$, one has
$\rho\leq8|f(z)|/\eta$ and hence
\[
 \frac M2\rho^2\leq\frac{32M}{\eta^2}|f(z)|^2.
\]
It is therefore enough to impose
\[
 |f(z)|<c_0:=\min\left\{\frac\eta2,
       \frac{\eta r}{16},\frac{3\eta^2}{64M}\right\}.
\]
Then $\rho<r$ and the quadratic error is strictly smaller than
$3|f(z)|$.  Rouch\'e's theorem shows that $f$ has a zero in $D(z,\rho)$, so
\begin{equation}\label{eq:small-implies-zero}
 |f(z)|<c_0\quad\Longrightarrow\quad
 \dist(z,Z(f))\leq \frac{8}{\eta}|f(z)|.
\end{equation}
The choice of $c_0$ gives $8c_0/\eta<r$.  Hence the contrapositive of
\eqref{eq:small-implies-zero} gives $|f(z)|\geq c_0$ whenever
$\dist(z,Z(f))\geq r$.  Replacing the previous value of $a$ by
$\min\{\eta/2,4c_0/r\}$ preserves (iii) and makes $c_0\geq ar/4$, proving
(iv).
\end{proof}

\begin{remark}
The final part of the preceding proof is included to avoid an often-used but
insufficient assertion that a large derivative by itself forces a nearby
zero.  The quantitative Rouch\'e argument is what makes the assertion valid.
\end{remark}


\section{Geometry of the transverse zero divisors}\label{sec:zero-geometry}

Let $s_N$ be the sections given by \Cref{thm:curve-transversality}, and let
\[
 D_N=Z_{s_N}=w_{1,N}+\cdots+w_{N,N}.
\]
The degree is exactly $N$, because a nonzero holomorphic section of $L^N$
has a zero divisor of degree $\deg(L^N)=N$.  We transfer
\Cref{lem:local-zero} to uniformly
controlled rescaled holomorphic coordinates and frames.

Because $X$ is compact and $h,g$ are smooth, there is a fixed $R>2$ with the
following property. Let $N$ be sufficiently large and $x\in X$. Then the
$g_N$-ball $B_{g_N}(x,R)$ carries a holomorphic coordinate $u$ and a
holomorphic frame for $L^N$ with the following three properties.
\begin{itemize}
\item the coordinate metric and its first two derivatives are uniformly
comparable to the Euclidean metric;
\item the frame metric and the Chern connection coefficients have uniform
$C^2$ bounds on fixed subdisks;
\item the estimates in \eqref{eq:our-uniform-estimates} imply the hypotheses
of \Cref{lem:local-zero}, with constants independent of $x$ and $N$.
\end{itemize}
These are elementary consequences of Taylor expansion in a fixed finite
atlas after the change of scale $u=\sqrt N\,z$.  The connection coefficient
of $h^N$ remains bounded at that scale because a normal frame makes the
linear term vanish and the curvature term is quadratic.
Concretely, if $s_N=f_Ne_N$ in the rescaled holomorphic frame, then
$\nabla^{1,0}s_N=(f_N'+A_Nf_N)e_N$, with $A_N$ uniformly bounded on a
fixed disk.  Differentiating once more expresses $f_N''$ through
$\nabla^2s_N$, $f_N'$, and uniformly bounded coefficients.  The frame norm
is bounded above and below there.  Hence the invariant $C^2_N$ estimate
gives a uniform bound for $|f_N|+|f_N'|+|f_N''|$, while
\eqref{eq:eta-transverse} gives a comparable lower bound for
$|f_N|+|f_N'|$.  This is the precise passage to
\Cref{lem:local-zero}; it is also recorded in
\eqref{eq:jet-comparison}--\eqref{eq:C2-comparison} of
\Cref{app:rescaling}.

\begin{proposition}[Simple separated zeros]\label{prop:separation}
There exists $r_0>0$, independent of $N$, such that every zero of $s_N$ is
simple and
\[
 d_{g_N}(w_{i,N},w_{j,N})\geq 2r_0\qquad(i\neq j).
\]
Equivalently,
\[
 d_g(w_{i,N},w_{j,N})\geq 2r_0N^{-1/2}.
\]
\end{proposition}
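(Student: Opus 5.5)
The plan is to reduce \Cref{prop:separation} to \Cref{lem:local-zero} through the uniformly controlled rescaled charts described just before the statement, and then to convert Euclidean separation in the chart into $g_N$-separation. Fix $N$ large and a zero $p=w_{i,N}$. Take the rescaled holomorphic coordinate $u$ on $B_{g_N}(p,R)$, centred so that $u(p)=0$, together with the rescaled frame $e_N$, and write $s_N=f_Ne_N$. By the three bullet properties and the appendix comparisons \eqref{eq:jet-comparison}--\eqref{eq:C2-comparison}, the estimates \eqref{eq:our-uniform-estimates} give constants $M,\eta'$, independent of $p$ and $N$, such that $f_N$ satisfies \eqref{eq:local-assumptions} on $D(0,2)$ (after a fixed dilation of $u$ if needed, so that $D(0,2)$ lies in the chart). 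Since the frame norm is bounded above and below, the zeros of $s_N$ in the chart are exactly those of $f_N$, with the same multiplicities.

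Simplicity is then immediate from \Cref{lem:local-zero}(i), because every zero of $s_N$ is the centre of such a chart. For separation, apply \Cref{lem:local-zero}(ii) with $r=r(M,\eta')$: any other zero $q$ with $u(q)\in D(0,1)$ satisfies $|u(q)|\ge 2r$. Because the coordinate metric is uniformly comparable to the Euclidean one, there is a constant $\kappa\ge1$, independent of $N$, with $\kappa^{-1}|u(x)-u(y)|\le d_{g_N}(x,y)\le\kappa|u(x)-u(y)|$ on the relevant subdisk.

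Two cases remain. If $u(q)\in D(0,1)$, then $d_{g_N}(p,q)\ge 2r/\kappa$. If $q$ is not in the preimage of $D(0,1)$, then $d_{g_N}(p,q)\ge c_1$ for a fixed $c_1>0$. This holds because the $g_N$-ball of radius $1/\kappa$ about $p$ lies inside that preimage, and a minimizing geodesic leaving this ball has length at least $1/\kappa$; one checks this with the comparability on the chart, provided $R$ exceeds the radii involved. Setting $2r_0=\min\{2r/\kappa,\,c_1\}$ gives the claim. The equivalent form in $g$ follows from $d_{g_N}=\sqrt N\,d_g$.

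The main obstacle is the uniformity of the chart constants: the hypotheses of \Cref{lem:local-zero} must hold with $M,\eta'$ independent of the base point and of $N$. This is exactly what the preceding discussion and \Cref{app:rescaling} supply, so I would take it as given. The remaining care is the metric comparison near the boundary of the chart, which is routine.
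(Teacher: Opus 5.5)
Your proposal is correct and is the paper's argument: transfer \eqref{eq:our-uniform-estimates} to the rescaled chart via the appendix comparisons, then read off simplicity and separation from \Cref{lem:local-zero}\textup{(i)} and \textup{(ii)}. The paper leaves implicit the conversion from Euclidean separation in the chart to $g_N$-distance on $X$, and your two-case argument (the other zero inside versus outside the preimage of $D(0,1)$) supplies that step correctly.
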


\begin{proof}
In each rescaled coordinate chart the representing holomorphic function
satisfies \Cref{lem:local-zero} with the same $M$ and $\eta$ (after changing
them by uniform frame constants).  Parts (i) and (ii) of that lemma give the
claim.
\end{proof}

\begin{proposition}[Uniform norm model]\label{prop:norm-model}
There are constants $c,C,r_0>0$, independent of $N$, such that
\begin{align}
 c\,d_{g_N}(z,D_N)&\leq |s_N(z)|_{h^N}
 \leq C\,d_{g_N}(z,D_N),
 &&d_{g_N}(z,D_N)<r_0,\label{eq:near-zero-model}\\
 c&\leq |s_N(z)|_{h^N}\leq C,
 &&d_{g_N}(z,D_N)\geq r_0.\label{eq:away-zero-model}
\end{align}
Here $d_{g_N}(z,D_N)=\min_jd_{g_N}(z,w_{j,N})$.
\end{proposition}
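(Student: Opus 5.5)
The plan is to reduce everything to \Cref{lem:local-zero} in the uniformly controlled rescaled charts described just before \Cref{prop:separation}. Fix $x\in X$ and the chart $(u,e_N)$ on $B_{g_N}(x,R)$, write $s_N=f_Ne_N$, and use that the frame norm $|e_N|_{h^N}$ lies between two constants $0<\kappa\leq\kappa'$ independent of $x,N$. Likewise, $g_N$-distance and the Euclidean distance in $u$ are uniformly bi-Lipschitz on the unit disk, with constant $\Lambda_0$ say. By the comparison recorded in \Cref{app:rescaling}, $f_N$ satisfies \eqref{eq:local-assumptions} on $D(0,2)$ with $M,\eta$ depending only on the fixed data. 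Hence \Cref{lem:local-zero} gives constants $r,a,A$ that are independent of $x$ and $N$. Then $|s_N|_{h^N}=|f_N|\,|e_N|_{h^N}$ is comparable to $|f_N|$.

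\emph{Upper bound away from zeros.} This is immediate from $\|s_N\|_{C^0_N}\leq C_0$ in \eqref{eq:our-uniform-estimates}, globally.

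\emph{Near-zero model.} Choose $r_0\leq r/(2\Lambda_0)$, shrinking the $r_0$ of \Cref{prop:separation} if necessary (the statement allows a new $r_0$). Given $z$ with $d_{g_N}(z,D_N)<r_0$, let $p$ be a nearest zero and center the chart at $x=z$. Then $p$ lies in $D(0,1)$ and $|u(z)-u(p)|<r$, so part (iii) gives $a|u(z)-u(p)|\leq|f_N(z)|\leq A|u(z)-u(p)|$. The Euclidean distance $|u(z)-u(p)|$ is comparable to $d_{g_N}(z,p)=d_{g_N}(z,D_N)$, and this gives \eqref{eq:near-zero-model}. One small point must be checked: the nearest zero in the chart really is the nearest zero in $g_N$. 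This holds because all zeros within $g_N$-distance $r_0$ are inside the chart, and by part (ii) there is at most one of them.

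\emph{Lower bound away from zeros.} This is the only step with content. Take $z$ with $d_{g_N}(z,D_N)\geq r_0$ and center the chart at $z$. Every zero of $f_N$ in $D(0,1)$ is then at Euclidean distance at least $r_0/\Lambda_0=:r_1$ from the origin. Zeros outside $D(0,1)$ are also at distance at least $1\geq r_1$. Part (iv) is stated with threshold $r$, not $r_1$, so I will not apply it directly. Instead I will use its proof, namely the quantitative Rouch\'e implication \eqref{eq:small-implies-zero}: if $|f_N(0)|<c_0$, then there is a zero within $8|f_N(0)|/\eta$ of the origin. Since that distance must be at least $r_1$, we get $|f_N(0)|\geq\min\{c_0,\eta r_1/8\}$. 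Transferring back through the frame-norm bounds yields $|s_N(z)|_{h^N}\geq c$ with $c$ independent of $N$, which proves \eqref{eq:away-zero-model}.

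The main obstacle is only bookkeeping. The constants $r_0,c,C$ must be chosen after $M,\eta,\Lambda_0,\kappa,\kappa'$, and the Rouch\'e disk of radius $\rho<r<1/4$ must stay inside $D(0,2)$ so that the Taylor estimates apply. Both follow from the choices already made in \Cref{lem:local-zero}.
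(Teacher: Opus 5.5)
Your proposal is correct and follows the paper's proof: the $C^0$ bound gives both upper estimates, part (iii) of \Cref{lem:local-zero} in a rescaled chart centered at $z$ gives the near-zero model, and the quantitative Rouch\'e mechanism gives the lower bound away from $D_N$. The one difference is worth keeping. You invoke \eqref{eq:small-implies-zero} directly to get $|f_N(0)|\geq\min\{c_0,\eta r_1/8\}$, rather than quoting part (iv) with its fixed threshold $r$, and this correctly handles the fact that the zero-free Euclidean radius $r_1=r_0/\Lambda_0$ is smaller than $r$, a point the paper's ``reducing $r_0$ once'' glosses over.
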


\begin{proof}
The upper bound $|s_N|\leq C$ is part of \eqref{eq:our-uniform-estimates}.
If $z$ is within $r_0$ of a zero, separation makes that zero unique and
part (iii) of \Cref{lem:local-zero}, together with uniform coordinate and
frame comparison, gives \eqref{eq:near-zero-model}.  Part (iv) gives the
lower bound in \eqref{eq:away-zero-model}.  All comparison constants are
uniform by compactness and the rescaling described above.
To spell out the last step, center a controlled rescaled chart at $z$.
If $d_{g_N}(z,D_N)\geq r_0$, the representing function has no zero in the
corresponding Euclidean disk of radius comparable to $r_0$.  After reducing
$r_0$ once to absorb the metric comparison constants,
\Cref{lem:local-zero}\textup{(iv)} gives a fixed positive lower bound at
the chart center.  Multiplication by the uniformly nondegenerate frame norm
then gives the invariant lower bound for $|s_N(z)|_{h^N}$.
\end{proof}

\begin{theorem}[Natural-scale covering of the transverse divisors]
\label{thm:transverse-covering}
There is a constant $R_{\mathrm{cov}}<\infty$, independent of $N$, such
that the divisors $D_N=Z_{s_N}$ constructed in
\Cref{thm:curve-transversality} satisfy
\begin{equation}\label{eq:transverse-covering}
 \sup_{x\in X}d_{g_N}(x,D_N)\leq R_{\mathrm{cov}}
\end{equation}
for every sufficiently large $N$.  Equivalently, their covering radius in
any fixed unscaled metric is $O(N^{-1/2})$.
\end{theorem}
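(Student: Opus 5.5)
The plan is to argue by contradiction, using the curvature equation \eqref{eq:PL} on a large zero-free $g_N$-ball, together with the two-sided bounds of \Cref{prop:norm-model} on such a ball. Put $u:=\log|s_N|_{h^N}$. Suppose $x\in X$ satisfies $d_{g_N}(x,D_N)\geq\rho+r_0$, with $\rho$ to be fixed later. Then $u$ is smooth on $B_{g_N}(x,\rho+r_0)$. Moreover every point of $B_{g_N}(x,\rho)$ lies at $g_N$-distance at least $r_0$ from $D_N$. Hence \eqref{eq:away-zero-model} gives
\[
 \log c\leq u\leq\log C\qquad\text{on }B_{g_N}(x,\rho),
\]
with $c,C$ independent of $N$, $x$ and $\rho$. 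On the other hand, \eqref{eq:PL} gives $\ddc u=-N\mu$ there, since $[Z_{s_N}]$ vanishes on this ball. So $u$ is uniformly strictly superharmonic at the natural scale, and a function with Laplacian bounded away from zero cannot stay within a bounded oscillation window on a large disk.

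\emph{Step 1: rescaled coordinates.} Fix a finite holomorphic atlas of $X$ for $g$. For a given $\rho$, take $N\geq N_0(\rho)$ so large that $2\rho N^{-1/2}$ is below the Lebesgue number of the atlas. Then the rescaled coordinate $\zeta=\sqrt N\,(z-z(x))$ is defined on a $g_N$-ball around $x$ of radius $2\rho$. In this coordinate, $g_N$ is uniformly comparable to the Euclidean metric, with constants independent of $\rho$, $x$ and $N$. This is the construction recalled before \Cref{prop:separation} and in \Cref{app:rescaling}, now used on balls of radius $2\rho$ instead of the fixed radius $R$. The density of $\mu$ with respect to $\frac{i}{2}dz\wedge d\bar z$ is bounded below by some $m>0$. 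Therefore
\[
 N\mu\geq m\,\tfrac{i}{2}d\zeta\wedge d\bar\zeta
\]
in $\zeta$, which means $\Delta_\zeta u\leq-\kappa$ for a fixed $\kappa>0$ (with the convention $\ddc=\tfrac{1}{2\pi}\Delta_\zeta\,\tfrac{i}{2}d\zeta\wedge d\bar\zeta$, one gets $\kappa=2\pi m$). The comparability of metrics gives a Euclidean disk $D(0,\rho')\subset B_{g_N}(x,\rho)$ with $\rho'\geq\rho/K$ for a uniform $K$.

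\emph{Step 2: mean value contradiction.} The function $v(\zeta)=u+\tfrac{\kappa}{4}|\zeta|^2$ is superharmonic on $D(0,\rho')$. By the mean-value inequality over the circle of radius $\rho'$,
\[
 \log C\geq u(0)=v(0)\geq\frac{1}{2\pi}\int_0^{2\pi}v(\rho'e^{i\theta})\,d\theta\geq\log c+\frac{\kappa}{4}\rho'^2.
\]
This forces $\rho^2\leq 4K^2\kappa^{-1}\log(C/c)$. Now fix $\rho$ strictly larger than this bound, which depends only on the fixed data, and set $R_{\mathrm{cov}}:=\rho+r_0$. For every $N\geq N_0(\rho)$, no point $x$ can have $d_{g_N}(x,D_N)\geq R_{\mathrm{cov}}$, which proves \eqref{eq:transverse-covering}. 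The unscaled form follows from $d_g=N^{-1/2}d_{g_N}$.

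\emph{Main obstacle.} The step needing care is the order of quantifiers in Step 1. The constants $c,C$ from \Cref{prop:norm-model}, and the curvature lower bound $\kappa$, must be fixed before $\rho$ is chosen. Only afterwards is the threshold $N_0(\rho)$ chosen so that the rescaled chart reaches radius $2\rho$ with metric distortion independent of $\rho$. This works because the distortion of $\zeta$ on a $g_N$-ball of radius $2\rho$ is governed by the $g$-radius $2\rho N^{-1/2}\to0$. So the comparison constants only improve as $N$ grows, and no circularity arises. The finitely many $N<N_0$ are not needed, since the statement is only for $N$ sufficiently large.
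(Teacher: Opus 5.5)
Your proposal is correct and follows essentially the same route as the paper. The paper also uses $\ddc\log|s_N|_{h^N}=-N\mu$ on a zero-free rescaled disk to make $F_N+(\kappa/4)|u|^2$ superharmonic, and plays the mean-value gain $\kappa A^2/4$ against the two-sided bound from \eqref{eq:away-zero-model}. It fixes the disk radius before $N$, exactly as you do, and sets $R_{\mathrm{cov}}=r_0+C_gA$; your $\rho+r_0$ with $\rho'\geq\rho/K$ is the same choice.
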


\begin{proof}
The idea is that a large zero-free ball is impossible.  On such a ball the
Poincar\'e--Lelong identity makes $\log|s_N|$ superharmonic up to a fixed
quadratic correction, so its value at the center exceeds its boundary average by
an amount growing with the radius.  But the norm model bounds both by the same
constant.

Put $F_N=\log|s_N|_{h^N}$.  The two-sided estimate
\eqref{eq:away-zero-model} gives a constant $C_1$ such that
\begin{equation}\label{eq:potential-away-zero-two-sided}
 |F_N(y)|\leq C_1
 \qquad\text{whenever }d_{g_N}(y,D_N)\geq r_0.
\end{equation}

We show that a zero-free rescaled ball cannot have arbitrarily large fixed
radius.  Fix a number $A>0$, to be chosen below.  In the rescaled coordinate
$u=\sqrt N z$ of \Cref{app:rescaling}, the metric $g_N$ is uniformly
comparable with the Euclidean metric on $|u|\leq A$, and
\eqref{eq:measure-scaling} reads
\[
 N\,d\mu=m_x(u/\sqrt N)\,dA(u),
 \qquad m_x(u/\sqrt N)\geq m_0>0.
\]
If this coordinate disk contains no zero, then the Poincar\'e--Lelong
identity gives $\ddc F_N=-N\mu$ there.  With the convention fixed in
\eqref{eq:green-normalization}, this implies
\begin{equation}\label{eq:covering-laplacian}
 \Delta_u F_N=-2\pi m_x(u/\sqrt N)\leq-\kappa,
 \qquad \kappa:=2\pi m_0>0.
\end{equation}
Thus $F_N+(\kappa/4)|u|^2$ is superharmonic.  The mean-value inequality on
the circle $|u|=A$ gives
\begin{equation}\label{eq:covering-mean-value}
 F_N(0)-\frac1{2\pi}\int_0^{2\pi}F_N(Ae^{i\theta})\,d\theta
 \geq \frac{\kappa A^2}{4}.
\end{equation}

Choose $A$ so large that $\kappa A^2/4>2C_1$.  This is a fixed choice made
before $N$; by \Cref{app:rescaling}, the required coordinate disk exists
uniformly for all sufficiently large $N$.  Let $C_g$ be the uniform upper
comparison between Euclidean distance in this disk and $d_{g_N}$.  If some
$x\in X$ satisfied
\[
 d_{g_N}(x,D_N)>r_0+C_gA,
\]
then every point of the coordinate disk $|u|\leq A$ centered at $x$ would
have $g_N$-distance at least $r_0$ from $D_N$.  Estimate
\eqref{eq:potential-away-zero-two-sided} would bound both terms on the
left-hand side of \eqref{eq:covering-mean-value} in absolute value by
$C_1$, contradicting the choice of $A$.  Therefore
$R_{\mathrm{cov}}:=r_0+C_gA$ proves \eqref{eq:transverse-covering}.
The equivalence with the unscaled estimate follows from $g_N=Ng$ and the
bi-Lipschitz equivalence of fixed metrics on compact $X$.
\end{proof}

\begin{corollary}[Uniform Voronoi geometry]
\label{cor:transverse-voronoi}
For a zero $w_{j,N}\in D_N$, let
\[
 \mathcal V_{j,N}:=
 \{x\in X:d_{g_N}(x,w_{j,N})\leq
              d_{g_N}(x,w_{k,N})\text{ for every }k\}
\]
be its closed $g_N$-Voronoi cell.  After reducing the separation constant
$r_0$ if necessary, uniformly in $j$ and all sufficiently large $N$,
\begin{equation}\label{eq:voronoi-two-sided-radius}
 B_{g_N}(w_{j,N},r_0)\subset\mathcal V_{j,N}
 \subset B_{g_N}(w_{j,N},R_{\mathrm{cov}}).
\end{equation}
Consequently there are constants $c,C>0$ such that
\begin{equation}\label{eq:voronoi-area-diameter}
 \frac{c}{N}\leq\mu(\mathcal V_{j,N})\leq\frac{C}{N},
 \qquad
 \operatorname{diam}_{g_N}(\mathcal V_{j,N})
 \leq2R_{\mathrm{cov}}.
\end{equation}
\end{corollary}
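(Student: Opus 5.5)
The two inclusions in \eqref{eq:voronoi-two-sided-radius} come directly from \Cref{prop:separation} and \Cref{thm:transverse-covering}. The area bounds then follow by comparing $g_N$-balls of fixed radius with $\mu$, using the rescaling of \Cref{app:rescaling}.

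\emph{Inner inclusion.} Let $x\in B_{g_N}(w_{j,N},r_0)$. For every $k\neq j$, \Cref{prop:separation} and the triangle inequality give
\[
 d_{g_N}(x,w_{k,N})\geq 2r_0-d_{g_N}(x,w_{j,N})> r_0> d_{g_N}(x,w_{j,N}),
\]
so $x\in\mathcal V_{j,N}$. Here $r_0$ is the separation constant; we take it to be the smaller of the constants named $r_0$ in \Cref{prop:separation} and \Cref{prop:norm-model}, which is the reduction mentioned in the statement.

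\emph{Outer inclusion.} Let $x\in\mathcal V_{j,N}$. By definition of the cell, $d_{g_N}(x,w_{j,N})=d_{g_N}(x,D_N)$, and \Cref{thm:transverse-covering} bounds this by $R_{\mathrm{cov}}$. The diameter bound is then immediate from the triangle inequality.

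\emph{Area bounds.} It remains to compare $\mu$ with $g_N$-balls of fixed radius. For $\rho\leq\rho_0$ and $N$ large, I will show
\[
 \frac{c(\rho)}{N}\leq\mu\bigl(B_{g_N}(x,\rho)\bigr)\leq\frac{C(\rho)}{N},
\]
uniformly in $x$. In the rescaled coordinate $u=\sqrt N z$ of \Cref{app:rescaling}, we have $N\,d\mu=m_x(u/\sqrt N)\,dA(u)$ with $m_x$ bounded above and below. Also $g_N$ is uniformly comparable to the Euclidean metric on a fixed disk $|u|\leq A$. So the ball $B_{g_N}(x,\rho)$ contains, and is contained in, Euclidean disks of radius comparable to $\rho$, and the two-sided bound follows.

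The radius $R_{\mathrm{cov}}$ need not fit inside one rescaled chart. If necessary I will enlarge the chart radius in \Cref{app:rescaling}: this is legitimate because $R_{\mathrm{cov}}$ is a fixed constant chosen before $N$. Alternatively, for the upper bound it suffices to note that $B_{g_N}(x,R)$ equals $B_g(x,RN^{-1/2})$. For large $N$ this unscaled ball is small, and its $\mu$-area is at most $C R^2/N$ by the uniform equivalence of $\mu$ with the $g$-area form on compact $X$.

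Applying the lower bound with $\rho=r_0$ and the upper bound with $\rho=R_{\mathrm{cov}}$ yields $c/N\leq\mu(\mathcal V_{j,N})\leq C/N$.

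The only point needing care is this last uniformity: the ball of radius $R_{\mathrm{cov}}$ must lie in a controlled chart, or be handled in unscaled form as above. It is routine once the constants are fixed in that order.
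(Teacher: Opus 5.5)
Your proof is correct and follows the paper's argument. The inner inclusion comes from separation and the triangle inequality, the outer inclusion and diameter bound from the covering theorem, and the area bounds from the uniform two-sided $\mu$-area estimate for fixed-radius $g_N$-balls given by the rescaled measure scaling. Your extra care about fitting the $R_{\mathrm{cov}}$-ball into a controlled chart, or passing to $B_g(x,R_{\mathrm{cov}}N^{-1/2})$, just makes explicit what the paper takes directly from the appendix.
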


\begin{proof}
The separation estimate in \Cref{prop:separation} says that every other
zero is at $g_N$-distance at least $2r_0$ from $w_{j,N}$.  The triangle
inequality therefore puts $B_{g_N}(w_{j,N},r_0)$ inside the closed Voronoi
cell.  If $x\in\mathcal V_{j,N}$, then $w_{j,N}$ is a nearest zero to $x$;
\Cref{thm:transverse-covering} gives
$d_{g_N}(x,w_{j,N})=d_{g_N}(x,D_N)\leq R_{\mathrm{cov}}$.  This proves
\eqref{eq:voronoi-two-sided-radius} and the diameter bound.

For each fixed radius, bounded rescaled geometry and
\eqref{eq:measure-scaling} give
\[
 \frac{c_r}{N}\leq\mu(B_{g_N}(x,r))\leq\frac{C_r}{N}
\]
uniformly in the center and in sufficiently large $N$.  Apply this estimate
to the two balls in \eqref{eq:voronoi-two-sided-radius}.  Possible overlaps
of closed Voronoi cells occur only on their boundaries and do not affect
the individual measure estimates.
\end{proof}

\section{Uniform logarithmic integrability}\label{sec:log-integrability}

The crucial estimate is now elementary but must retain the correct scaling.

\begin{theorem}\label{thm:log-L1}
There is a constant $C<\infty$, independent of $N$, such that
\begin{equation}\label{eq:log-L1}
 \int_X\left|\log|s_N|_{h^N}\right|d\mu\leq C.
\end{equation}
\end{theorem}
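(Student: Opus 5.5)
The plan is to split $X$ into the union of the small zero neighborhoods $E_N:=\bigcup_j B_{g_N}(w_{j,N},r_0)$ and its complement, and to use \Cref{prop:norm-model} on each piece. On $X\setminus E_N$ every point has $d_{g_N}(z,D_N)\geq r_0$, so \eqref{eq:away-zero-model} gives $c\leq|s_N|_{h^N}\leq C$. Hence $|\log|s_N|_{h^N}|\leq\max\{|\log c|,|\log C|\}$ there, and this part contributes at most a constant, since $\mu(X)=1$.

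On $E_N$ the upper bound $|s_N|_{h^N}\leq C$ from \eqref{eq:our-uniform-estimates} again controls the positive part of the logarithm by a constant. Only the negative part needs work. By \Cref{prop:separation} the balls $B_{g_N}(w_{j,N},r_0)$ are pairwise disjoint, so it suffices to bound each ball's contribution by $O(N^{-1})$ and then sum over the $N$ zeros. On a single ball, \eqref{eq:near-zero-model} gives
\[
 \log^-|s_N(z)|_{h^N}\leq|\log c|+\log^-d_{g_N}(z,w_{j,N}).
\]

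To integrate this, pass to the rescaled coordinate $u=\sqrt N\,z$ of \Cref{app:rescaling} centered at $w_{j,N}$. There $g_N$ is uniformly comparable to the Euclidean metric, so $d_{g_N}(z,w_{j,N})\geq c'|u|$. By \eqref{eq:measure-scaling}, $d\mu=N^{-1}m(u/\sqrt N)\,dA(u)$ with $m$ bounded above. The ball corresponds to a subset of a fixed Euclidean disk $|u|\leq C''r_0$. Therefore the contribution is at most
\[
 \frac{C}{N}\int_{|u|\leq C''r_0}\bigl(1+\log^-(c'|u|)\bigr)\,dA(u)=O(N^{-1}),
\]
because $\log|u|$ is locally integrable in the plane. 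Summing over the $N$ disjoint balls gives $O(1)$, and adding the two pieces yields \eqref{eq:log-L1}.

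I expect no real obstacle, since the work was already done in \Cref{lem:local-zero} and \Cref{prop:norm-model}. The points needing care are bookkeeping. The first is that the area factor $N^{-1}$ from the rescaling exactly compensates the $N$ zeros. The second is that the comparison constants between $g_N$-distance and $|u|$, and between $\mu$ and Lebesgue measure in $u$, are uniform in $j$ and $N$. Both follow from the bounded-geometry charts of \Cref{app:rescaling}, taking $N\geq N_0$ so that the charts of fixed rescaled radius exist.
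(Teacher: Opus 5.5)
Your proposal is correct and follows the paper's proof essentially step for step: you split off the disjoint microscopic zero balls, bound $|\log|s_N|_{h^N}|$ uniformly on the complement via \Cref{prop:norm-model}, and on each ball reduce to the planar integral of $\log|u|$, with the factor $N^{-1}$ from \eqref{eq:measure-scaling} compensating the $N$ zeros. The only cosmetic difference is that the paper uses balls of a smaller radius $r<r_0$ and covers the shell $r\leq d_{g_N}(z,D_N)<r_0$ with \eqref{eq:near-zero-model}, whereas you take radius $r_0$ directly and treat $\log^+$ and $\log^-$ separately.
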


\begin{proof}
We split $X$ into the microscopic balls around the zeros and their complement.
On the complement the norm model bounds $|\log|s_N||$ uniformly; on each ball the
model reduces the integral to a planar $\log|u|$, which is integrable and whose
contribution is $O(N^{-1})$.  Since there are exactly $N$ balls, the total is
$O(1)$.

Let $0<r<r_0$ be so small that the $g_N$-balls
\[
 B_{j,N}:=B_{g_N}(w_{j,N},r),\qquad j=1,\ldots,N,
\]
are pairwise disjoint, by \Cref{prop:separation}.  On the complement of
their union, the two estimates in \Cref{prop:norm-model} bound
$|\log|s_N||$ uniformly: use \eqref{eq:near-zero-model} when the distance
to $D_N$ lies between $r$ and $r_0$, and \eqref{eq:away-zero-model} beyond
$r_0$.

On $B_{j,N}$, use a rescaled coordinate $u$ centered at $w_{j,N}$.
Uniform bounded geometry and \eqref{eq:near-zero-model} give
\[
 \left|\log|s_N(z)|_{h^N}\right|
 \leq C+|\log|u||.
\]
The original probability form scales as
\[
 d\mu(z)\leq \frac{C}{N}\,dA(u)
\]
on the fixed $u$-disk.  Therefore
\[
 \int_{B_{j,N}}\left|\log|s_N|_{h^N}\right|d\mu
 \leq \frac{C}{N}\int_{|u|<r}(1+|\log|u||)dA(u)
 \leq\frac{C'}{N}.
\]
The planar integral is finite because polar coordinates give
\[
 \int_{|u|<r}|\log|u||\,dA(u)
 =2\pi\int_0^r\rho|\log\rho|\,d\rho<\infty.
\]
Summing over the exactly $N$ zeros gives a uniform bound.  Adding the
bounded contribution from the complement proves \eqref{eq:log-L1}.
\end{proof}

\begin{corollary}\label{cor:Lbeta}
For every $\beta>0$ there is $C_\beta$ independent of $N$ such that
\[
 \int_X|s_N|_{h^N}^{\beta}d\mu\leq C_\beta.
\]
\end{corollary}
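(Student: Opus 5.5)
The bound follows directly from the uniform $C^0$ estimate in \Cref{thm:curve-transversality}, with no need for the logarithmic integrability of \Cref{thm:log-L1}. By \eqref{eq:our-uniform-estimates} with $r=0$, we have $\sup_X|s_N|_{h^N}\leq C_0$, with $C_0$ independent of $N$. Since $\mu$ is a probability measure,
\[
 \int_X|s_N|_{h^N}^{\beta}\,d\mu\leq C_0^{\beta}=:C_\beta
\]
for every $\beta>0$. The integrand is nonnegative and continuous; it vanishes at the zeros and causes no trouble there.

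If one prefers an argument that stays inside the norm model, the same split as in the proof of \Cref{thm:log-L1} works. On the complement of the balls $B_{j,N}$, \Cref{prop:norm-model} gives $|s_N|_{h^N}\leq C$. On each $B_{j,N}$, the near-zero model \eqref{eq:near-zero-model} gives $|s_N|_{h^N}\leq C|u|\leq Cr$, so the integrand is again bounded. Summing gives the same uniform bound.

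The only point to check is that the constant in the $C^0$ bound is uniform in $N$. This is exactly the content of \Cref{thm:curve-transversality} for all sufficiently large $N$. For the finitely many remaining $N$, the corollary is only asserted for the constructed sections $s_N$, so no further work is needed. I do not expect any real obstacle here.

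The more substantive use of the corollary presumably comes when it is combined with \Cref{prop:potential-identity} and \eqref{eq:log-L1}. The mean $\int\log|s_N|\,d\mu$ is bounded below by $-C$, so $\exp\bigl(\beta\sum U_X\bigr)\leq e^{\beta C}|s_N|^\beta$. That step, however, belongs to the main proof rather than to this corollary.
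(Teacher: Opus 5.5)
Your argument is correct and is exactly the paper's proof: the $r=0$ case of \eqref{eq:our-uniform-estimates} gives $|s_N|_{h^N}\leq C_0$ uniformly in $N$, and $\mu(X)=1$ then yields $C_\beta=C_0^\beta$. Your second, norm-model argument is not really independent, since the upper bounds in \Cref{prop:norm-model} are themselves derived from that same $C^0$ estimate, but it does no harm.
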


\begin{proof}
The $C^0$ part of \eqref{eq:our-uniform-estimates} gives
$|s_N|_{h^N}\leq C_0$ on $X$, and $\mu(X)=1$.
\end{proof}

We now isolate what the preceding three sections used about the holomorphic
input, since nothing else about it enters the proof of \Cref{thm:main}.  The
answer is exactly the two displayed hypotheses of
\eqref{eq:our-uniform-estimates}.

\begin{proposition}[Transversality-to-packing criterion]
\label{prop:transversality-to-packing}
Let $t_N\in H^0(X,L^N)$ be any family for which
$\|t_N\|_{C^2_N}\leq M$ and
\[
 |t_N(x)|_{h^N}+|\nabla t_N(x)|_{g_N,h^N}\geq\eta
 \qquad(x\in X)
\]
 with $M$ and $\eta$ independent of $N$.  Then the zero configurations
$Z_{t_N}$ satisfy the exponent-free pointwise estimate
\begin{equation}\label{eq:transversality-pointwise-packing}
 \sup_{z\in X}\sum_{w\in Z_{t_N}}U_X(z,w)\leq C,
\end{equation}
where $C$ is independent of $N$.  In particular, for every fixed
$\beta>0$,
\[
 \sup_N B_\beta(Z_{t_N};X)<\infty.
\]
\end{proposition}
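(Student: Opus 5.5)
The plan is to rerun the chain of \Cref{sec:local-lemma,sec:zero-geometry,sec:log-integrability} with $t_N$ in place of $s_N$, and then close the argument with \Cref{prop:potential-identity}. We first check which hypotheses those sections actually use. The rescaled-chart passage to \Cref{lem:local-zero} in \Cref{sec:zero-geometry} needs only the $C^2_N$ bound, which controls $|f_N|+|f_N'|+|f_N''|$ through bounded connection coefficients, together with the $\eta$-transversality, which gives the lower bound on $|f_N|+|f_N'|$. The $C^3$ bounds of \Cref{thm:curve-transversality} are never used. So \Cref{prop:separation}, \Cref{prop:norm-model}, and \Cref{thm:log-L1} hold verbatim for $t_N$, with constants depending only on $M$, $\eta$ and the fixed data. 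In particular
\[
 |t_N|_{h^N}\leq M \quad\text{on } X
 \qquad\text{and}\qquad
 \int_X\bigl|\log|t_N|_{h^N}\bigr|\,d\mu\leq C .
\]

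Next we apply \Cref{prop:potential-identity} to $t_N$ and bound each term:
\[
 \sum_{w\in Z_{t_N}}U_X(z,w)=\log|t_N(z)|_{h^N}-\int_X\log|t_N|_{h^N}\,d\mu
 \leq \log M+\int_X\bigl|\log|t_N|_{h^N}\bigr|\,d\mu\leq \log M+C .
\]
This is the bound \eqref{eq:transversality-pointwise-packing}. Only the upper bound on $\log|t_N|$ and the $L^1$ bound enter, so this one-sided estimate needs no control near the zeros beyond what \Cref{thm:log-L1} already provides. The bound on $B_\beta$ then follows by exponentiating and integrating against the probability measure $\mu$:
\[
 B_\beta(Z_{t_N};X)\leq e^{\beta(\log M+C)} .
\]

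The main obstacle is making sure the constants in \Cref{thm:log-L1} really depend only on $(M,\eta)$ and the fixed geometry. The point to watch is the lower bound $c\leq|t_N|_{h^N}$ away from the zeros in \Cref{prop:norm-model}, which comes from \Cref{lem:local-zero}\textup{(iv)}. Its constant $a(M,\eta)$ is explicit and involves only $M$ and $\eta$. The radius $r_0$ is fixed once, using uniform metric comparison in the charts of \Cref{app:rescaling}. We also need that $\mu$ of a $g_N$-ball of radius $r$ is $O(N^{-1})$, which holds because the rescaled charts are uniform in $N$. With these checked, the argument is complete for all $N\geq N_0$. For finitely many small $N$ the statement concerns only the given sections; each is nonzero, and its potential is bounded above by the same identity, so enlarging $C$ covers them.
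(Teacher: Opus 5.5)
Your proposal is correct and follows the paper's own proof essentially verbatim: both observe that the separation, norm-model and logarithmic $L^1$ arguments use only the $C^2_N$ bound and $\eta$-transversality, and both feed $\log|t_N|_{h^N}\leq\log M$ and $\bigl|\int_X\log|t_N|_{h^N}\,d\mu\bigr|\leq C$ into the potential identity. The only differences are minor. You bound $B_\beta$ directly from the pointwise estimate rather than through the factorization $e^{-\beta m_N}\int_X|t_N|_{h^N}^\beta\,d\mu$, and you explicitly dispose of the finitely many small $N$ (using that transversality forces $t_N\not\equiv0$), which the paper leaves implicit.
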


\begin{proof}
The proofs of \Cref{prop:separation,prop:norm-model,thm:log-L1} use only the
two displayed hypotheses and bounded rescaled geometry.  They therefore
apply to $t_N$ with constants depending only on $M$, $\eta$, and the fixed
geometric data.  In particular,
$\log|t_N|_{h^N}\leq\log M$ and
$\left|\int_X\log|t_N|_{h^N}\,d\mu\right|\leq C$.  The potential identity
therefore gives \eqref{eq:transversality-pointwise-packing}.  It also gives
\[
 B_\beta(Z_{t_N};X)=
 e^{-\beta\int_X\log|t_N|_{h^N}\,d\mu}
 \int_X|t_N|_{h^N}^\beta\,d\mu,
\]
whose two factors are uniformly bounded exactly as above.
\end{proof}

\begin{remark}
\Cref{prop:transversality-to-packing} isolates the only information about the
holomorphic input that Route A uses.  Any future method producing
the same uniform rescaled $C^2$ and transversality bounds immediately gives
the Green-packing conclusion, without changing the potential-theoretic
part of the proof.
\end{remark}

\section{Proof of uniform Green packing}\label{sec:main-proof}

All the pieces of Route A are now in place, and it remains only to assemble them.
The potential identity converts the logarithmic norm of the section into the
Green potential of its divisor; the $C^0$ bound controls the potential from above;
and \Cref{thm:log-L1} controls the mean-zero correction.

\begin{proof}[Proof of \Cref{thm:main}]
For sufficiently large $N$, use the divisor $D_N=Z_{s_N}$.  By
\Cref{prop:potential-identity},
\[
 \sum_{j=1}^{N}U_X(z,w_{j,N})
 =\log|s_N(z)|_{h^N}-m_N,
 \qquad
 m_N:=\int_X\log|s_N|_{h^N}d\mu.
\]
The uniform $C^0$ bound on $s_N$ and \Cref{thm:log-L1} imply
\[
 \sup_X\sum_{j=1}^{N}U_X(\,\cdot\,,w_{j,N})
 \leq \log C_0+|m_N|\leq C.
\]
Thus \eqref{eq:main-pointwise-packing} holds for every sufficiently large
$N$, with a constant independent of $\beta$.  Consequently,
\begin{align*}
 B_\beta(D_N;X)
 &=e^{-\beta m_N}\int_X|s_N|_{h^N}^{\beta}d\mu\\
 &\leq e^{\beta C}C_\beta,
\end{align*}
where \Cref{thm:log-L1} gives $|m_N|\leq C$ and
\Cref{cor:Lbeta} controls the second factor.  Notice that the first bound
uses both signs of $\log|s_N|$: controlling only its positive part would
not prevent $e^{-\beta m_N}$ from growing.  Hence
\[
 \Theta_{N,\beta}(X)\leq C_{X,\mu,\beta}
\]
for every sufficiently large $N$.

Only finitely many positive integers remain.  For each such $N$, choose any
configuration of $N$ points.  Since $U_X(\cdot,w)$ is bounded above and has
only a negative logarithmic singularity at $w$, the corresponding potential
has finite supremum.  Enlarging the pointwise constant by the maximum of
these finitely many suprema proves \eqref{eq:main-pointwise-packing} for all
$N\geq1$.  Exponentiating and integrating gives
$B_\beta(D_N;X)\leq e^{\beta C_{X,\mu}}$ simultaneously for all $\beta>0$.

For the lower bound, Jensen's inequality and the mean-zero normalization of
the Green kernel give, for every $W=(w_1,\ldots,w_N)$,
\[
 B_\beta(W;X)
 \geq\exp\!\left(\beta\sum_{j=1}^{N}
 \int_XU_X(z,w_j)d\mu(z)\right)=1.
\]
Taking the infimum over $W$ completes the proof.
\end{proof}

\begin{remark}[The analytic mechanism of Route A]
The route just completed is the chain
\[
 \begin{aligned}
  &\text{zero-section transversality}\\[-2pt]
  &\qquad\Longrightarrow \text{uniform local linear model}\\[-2pt]
  &\qquad\Longrightarrow \text{simple separated zeros}\\[-2pt]
  &\qquad\Longrightarrow \text{uniform logarithmic integrability}.
 \end{aligned}
\]
The first estimate forces $|s_N|$ to look like the distance to one simple
zero on every microscopic zero ball and prevents two such balls from
overlapping.  Since each $g_N$-ball has $\mu$-area $O(N^{-1})$, summing the
integrable model $\log|u|$ over the $N$ zero balls costs only $O(1)$.
Away from those balls the section has a uniform positive lower bound.
Thus zero-section transversality is precisely the condition that converts
the potentially singular integral of $\log|s_N|$ into a uniform estimate.
\end{remark}


\section{A spectral-design proof: Route B}\label{sec:route-b}

The proof above follows Route A: construct a quantitatively transverse
holomorphic section and use its zero divisor.  We now give a second route
which works directly with points and the Green kernel.  It uses optimal
equal-weight designs for Laplace eigenfunctions and therefore does not use
holomorphic sections or quantitative transversality.

At a high level there are three steps.
\begin{enumerate}[label=\textup{(\arabic*)},leftmargin=2.2em]
\item \emph{Low-frequency cancellation.}  Choose an equal-weight design
that integrates exactly every Laplace mode up to
$\Lambda\asymp\sqrt N$.  It therefore cancels the low-frequency part of the
mean-zero Green kernel.  This is the spectral analogue of an exact
equal-weight quadrature rule.
\item \emph{High-frequency localization.}  The remaining kernel is bounded
above by a rapidly decaying function of $\Lambda d(z,w)$, apart from its favorable
negative logarithmic singularity at $z=w$.
\item \emph{Local counting.}  The design has only $O((R+1)^2)$ points in a
ball of radius $R/\Lambda$.  Summing the localized remainder over successive
$\Lambda^{-1}$-annuli
then gives a convergent series independent of $N$.
\end{enumerate}
The cutoff $\Lambda\asymp\sqrt N$ is where spectral dimension and spatial point
spacing meet: Weyl's law gives $O(\Lambda^2)$ modes, while localization occurs at
scale $\Lambda^{-1}\asymp N^{-1/2}$.
The same labeled partition used to build the design gives covering radius
$O(N^{-1/2})$; unlike Route A, this argument does not assert that distinct
spectral nodes are separated.  Sending the mass of each equal-area cell to
its labeled node also gives an explicit transport plan of cost
$O(N^{-1/2})$.

For a configuration $W_N=(w_{1,N},\ldots,w_{N,N})$, put
\[
 V_{W_N}(z):=\sum_{j=1}^N U_X(z,w_{j,N}).
\]
It is enough to construct $W_N$ such that
\begin{equation}\label{eq:route-b-upper-potential}
 \sup_{z\in X}V_{W_N}(z)\leq C
\end{equation}
with $C$ independent of $N$, because then
$B_\beta(W_N;X)\leq e^{\beta C}$ for every $\beta>0$.

\subsection{Laplace designs with local counting}

Choose the unique metric in the fixed conformal class whose area form is
$\mu$; positivity and smoothness of $\mu$ make this a smooth Riemannian
metric; write $d$ for its distance.  Let $\Delta$ be its nonnegative
Laplace--Beltrami operator and let
$\Delta\varphi_\nu=\lambda_\nu^2\varphi_\nu$ be a real orthonormal
eigenbasis of $L^2(X,\mu)$, where
$0=\lambda_0<\lambda_1\leq\cdots$.  A configuration $W_N$ is an
$\Lambda$-design if
\begin{equation}\label{eq:diffusion-design}
 \frac1N\sum_{j=1}^NP(w_{j,N})=\int_XP\,d\mu
 \qquad
 \text{for every }P\in
 \operatorname{span}\{\varphi_\nu:\lambda_\nu\leq \Lambda\}.
\end{equation}
Throughout, $\Lambda\geq1$ denotes the varying spectral cutoff; as fixed in
\Cref{rem:conventions}, the letter $L$ is reserved for the degree-one line
bundle of Route A and is never used for a bandwidth.  Every constant in the
estimates below is independent of $\Lambda$ unless explicitly stated otherwise.

Gariboldi and Gigante proved that on every compact connected oriented
$m$-manifold there is an equal-weight $\Lambda$-design represented by an
$N$-tuple whenever
$N\geq C_X\Lambda^m$ \cite[Theorem~6]{GariboldiGigante}.  Their degree argument
is carried out through a continuous map with values in $X^N$.  Inspecting
that proof therefore retains labels on the nodes, while no step excludes
coincident coordinates; repetitions are consequently counted with
multiplicity.  We also need the following geometric information retained by
the same construction.

\begin{remark}[Source extraction for the design geometry]
\label{rem:design-source-extraction}
The displacement and labeling information in the next lemma is \emph{not} the
literal statement of the design-existence theorem
\cite[Theorem~6]{GariboldiGigante}, which asserts only the existence of an
exact equal-weight design at the stated cardinality.  The additional geometry
is retained by the construction used in its proof, and we use the following
four ingredients.  We take the area-regular partition, with its inner-ball and
outer-ball bounds, from \cite[Theorem~3]{GariboldiGigante}.  In the proof of
\cite[Lemma~14]{GariboldiGigante} we choose the initial point attached to each
cell to be the center of that cell's inner ball.  The paths used there follow a
vector field of speed at most one and are evaluated at a time comparable to
$N^{-1/2}$, which is what bounds the cell-to-node displacement.  Finally, the
degree argument in the proof of \cite[Theorem~6]{GariboldiGigante} selects one
resulting \emph{labeled} $N$-tuple which is an exact design, so that the design
identity and all the geometric data are obtained by one simultaneous
construction.  Repeated final nodes are allowed throughout and are counted with
multiplicity.  Before submission, the page or equation numbers for the speed
and evolution-time estimates should be quoted from the published layout rather
than left for a referee to reconstruct.
\end{remark}

With that source reading understood, the geometric form we need is the
following.

\begin{lemma}[Partition-compatible design nodes]
\label{lem:partition-compatible-design}
In dimension two there are fixed constants
$c_{\rm part},C_{\rm part},C_{\rm mov}>0$ with the following property.
Whenever $N\geq C_X\Lambda^2$, the $\Lambda$-design may be labeled
$W_N=(w_{1,N},\ldots,w_{N,N})$ together with an area-regular partition
$X=R_1\cup\cdots\cup R_N$ and points $x_j\in R_j$ such that
\begin{equation}\label{eq:design-partition-geometry}
 \mu(R_j)=\frac1N,
 \qquad
 B(x_j,c_{\rm part}N^{-1/2})\subset R_j,
 \qquad
 \operatorname{diam}R_j\leq C_{\rm part}N^{-1/2},
\end{equation}
and
\begin{equation}\label{eq:design-node-displacement}
 d(w_{j,N},x_j)\leq C_{\rm mov}N^{-1/2}.
\end{equation}
The labeling is retained even if two final nodes coincide.
\end{lemma}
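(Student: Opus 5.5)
The plan is to obtain \Cref{lem:partition-compatible-design} by running the Gariboldi--Gigante construction with the choices fixed in \Cref{rem:design-source-extraction}, and reading off the geometry from the three stages of that construction.

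\emph{Stage 1: the partition.} Apply \cite[Theorem~3]{GariboldiGigante} in dimension $m=2$ to $(X,d,\mu)$. This gives an area-regular partition $X=R_1\cup\cdots\cup R_N$ with $\mu(R_j)=1/N$, with each cell containing a ball of radius $c_{\rm part}N^{-1/2}$, and with each cell of diameter at most $C_{\rm part}N^{-1/2}$. Let $x_j$ be the center of the inner ball of $R_j$. This gives \eqref{eq:design-partition-geometry} directly, with constants depending only on $(X,\mu)$.

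\emph{Stage 2: the deformation.} Follow the proof of \cite[Lemma~14]{GariboldiGigante}. For each $P$ in the finite-dimensional space $\Pi_\Lambda=\operatorname{span}\{\varphi_\nu:\lambda_\nu\leq\Lambda\}$ with $\|\nabla P\|_{L^2}$ normalized, the starting point of cell $j$ is moved along the flow of a gradient-type vector field of speed at most one. The flow is evaluated at a time $t\leq C N^{-1/2}$. The output is a continuous map
\[
 \Pi_\Lambda\supset\overline{B}\to X^N,\qquad P\mapsto (w_1(P),\ldots,w_N(P)),
\]
which keeps labels and satisfies $d(w_j(P),x_j)\leq t\leq C_{\rm mov}N^{-1/2}$ uniformly in $P$. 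The displacement bound is just unit speed times elapsed time, and it holds pointwise in $P$.

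\emph{Stage 3: the degree argument.} The proof of \cite[Theorem~6]{GariboldiGigante} composes this map with the quadrature-error map
\[
 P\mapsto\Bigl(\tfrac1N\sum_j Q(w_j(P))-\int_X Q\,d\mu\Bigr)_{Q\in\Pi_\Lambda}.
\]
It shows that, once $N\geq C_X\Lambda^2$, this composite points outward on the boundary sphere, and a Brouwer degree argument then produces a zero $P_*$. Set $w_{j,N}=w_j(P_*)$. Because $P_*$ lies in the domain of the map from Stage 2, the displacement bound \eqref{eq:design-node-displacement} holds for these nodes.

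\emph{Conclusion and main obstacle.} Labels are never merged: the map lands in $X^N$ rather than in $\operatorname{Sym}^N X$ or in a set of distinct points, so coincident nodes are simply retained with multiplicity. The main obstacle is checking that the time parameter in Lemma~14 is indeed $O(N^{-1/2})$ and not merely $O(\Lambda^{-1})$. I would resolve this by noting that the hypothesis $N\geq C_X\Lambda^2$ can be paired with choosing $\Lambda\asymp\sqrt N$. Either way, $\Lambda^{-1}$ is comparable to $N^{-1/2}$ up to the fixed constant $C_X^{1/2}$, which is absorbed into $C_{\rm mov}$. If the time scale in the source is instead tied to the cell diameter, the bound $C_{\rm part}N^{-1/2}$ applies directly.
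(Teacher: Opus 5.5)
Your proposal is correct and follows the paper's proof essentially step for step. It uses the Theorem~3 partition with inner-ball centers as starting points, the speed-at-most-one paths of Lemma~14 evaluated at time $12c_2N^{-1/2}$ (with $c_2$ the outer-ball constant, so your ``tied to the cell diameter'' branch is the one that actually occurs), and the labeled $N$-tuple selected by the degree argument of Theorem~6.

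Two small remarks. First, your fallback branch is mis-argued: $N\geq C_X\Lambda^2$ gives $\Lambda^{-1}\geq C_X^{1/2}N^{-1/2}$, which is the wrong direction, so an $O(\Lambda^{-1})$ time would need the extra observation that a design at the maximal bandwidth $\sqrt{N/C_X}$ is a design for every smaller $\Lambda$. Second, the paper additionally assigns any null boundary remainder of the partition to a cell whose closure contains it, which preserves measure and diameter.
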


\begin{proof}
The area-regular partition and its inner- and outer-ball bounds are
\cite[Theorem~3]{GariboldiGigante}; the outer-ball inclusion there implies
the diameter estimate in \eqref{eq:design-partition-geometry}.  If a null
boundary remainder is present, assign each of its points to one cell whose
closure contains it.  Enlarging that cell only inside its original closure
preserves its measure and diameter, so the cells cover $X$ without changing
the estimates.  In the proof of their Lemma~14, choose the initial point
attached to $R_j$ to be the center $x_j$ of its inner ball.  The paths
denoted there by $y_j(P,t)$ follow a vector field of speed at most one, and
the map used in the degree argument evaluates them at time
$12c_2N^{-1/2}$, where $c_2$ is the outer-ball constant from their
Theorem~3; see \cite[proof of Lemma~14]{GariboldiGigante}.  Hence
\eqref{eq:design-node-displacement}.  The degree argument in the proof of
their Theorem~6 selects one resulting labeled $N$-tuple which is an exact
$\Lambda$-design.  Thus the design and all the stated geometric data can be chosen
simultaneously.
\end{proof}

The lemma has two immediate consequences.  First, comparing a small ball
with the disjoint inner balls in
\eqref{eq:design-partition-geometry} gives the local counting estimate, with
multiplicities if some final nodes coincide,
\begin{equation}\label{eq:design-local-count}
 \#\bigl(W_N\cap B(x,RN^{-1/2})\bigr)
 \leq C_X(1+R)^2,
 \qquad x\in X,\quad R\geq0.
\end{equation}
Indeed, if a final node belongs to $B(x,RN^{-1/2})$, its initial inner-ball
center lies in $B(x,(R+C)N^{-1/2})$.  The inner balls have disjoint interiors,
radius comparable to $N^{-1/2}$, and area comparable to $N^{-1}$; an area
comparison therefore bounds their number by $C(R+C)^2$.
This argument uses small-ball area comparison when
$(R+C)N^{-1/2}$ is below a fixed radius.  Above that range,
$C(1+R)^2\geq N$ after increasing $C$, so the estimate is automatic.
Second, if $x\in R_j$, then
\[
 d(x,w_{j,N})\leq\operatorname{diam}R_j+d(x_j,w_{j,N})
 \leq(C_{\rm part}+C_{\rm mov})N^{-1/2}.
\]
Consequently the same designs have the natural-scale covering bound
\begin{equation}\label{eq:design-covering-radius}
 \sup_{x\in X}\min_{1\leq j\leq N}d(x,w_{j,N})
 \leq C_XN^{-1/2}.
\end{equation}

\begin{corollary}[Bounded multiplicity of spectral nodes]
\label{cor:design-bounded-multiplicity}
For the labeled design above,
\begin{equation}\label{eq:design-bounded-multiplicity}
 \sup_{y\in X}\#\{j:w_{j,N}=y\}\leq C_X.
\end{equation}
Consequently every atom of
$\nu_{W_N}=N^{-1}\sum_{j=1}^N\delta_{w_{j,N}}$ has mass at most $C_X/N$.
\end{corollary}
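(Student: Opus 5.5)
This is the local counting estimate \eqref{eq:design-local-count} with radius $R=0$, or more precisely with any fixed small radius. The plan is to reduce it to that estimate; no new construction is needed, only care with multiplicities.

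Fix $y\in X$ and let $J_y:=\{j:w_{j,N}=y\}$, counted with labels. Every $j\in J_y$ has $w_{j,N}\in B(y,N^{-1/2})$. By \eqref{eq:design-local-count} with $R=1$, counted with multiplicity, we get $\#J_y\leq 4C_X$.

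To avoid leaning on that estimate as a black box, we can also argue directly from \Cref{lem:partition-compatible-design}, which is the same argument. For $j\in J_y$, the displacement bound \eqref{eq:design-node-displacement} gives $d(x_j,y)\leq C_{\rm mov}N^{-1/2}$. Hence the inner balls $B(x_j,c_{\rm part}N^{-1/2})$ lie in $B\bigl(y,(C_{\rm mov}+c_{\rm part})N^{-1/2}\bigr)$. These inner balls lie in distinct cells $R_j$ of the partition, so they are pairwise disjoint up to null sets, and each has $\mu$-area at least $c/N$ by small-ball area comparison for the smooth metric whose area form is $\mu$. The enclosing ball has area at most $C(C_{\rm mov}+c_{\rm part})^2/N$. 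Comparing areas gives $\#J_y\leq C(C_{\rm mov}+c_{\rm part})^2/c$, uniformly in $y$ and in $N\geq N_0$.

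The only delicate point is that the small-ball comparison $\mu(B(x,\rho))\asymp\rho^2$ holds only for $\rho$ below a fixed injectivity-type radius. This is fine because $N^{-1/2}$ is small once $N\geq N_0$. For the finitely many smaller $N$, the trivial bound $\#J_y\leq N\leq N_0$ is absorbed into $C_X$.

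The statement about atoms follows at once. The mass of $\nu_{W_N}$ at $y$ is $\#J_y/N\leq C_X/N$.
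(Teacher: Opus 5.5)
Your proposal is correct and matches the paper's proof, which likewise observes that all indices with $w_{j,N}=y$ lie in $W_N\cap B(y,N^{-1/2})$ and applies the local counting estimate \eqref{eq:design-local-count} with $R=1$. Your direct inner-ball area comparison just unpacks the paper's own justification of that counting estimate, and it correctly relies on the retained labeling, so that coincident nodes still occupy distinct cells $R_j$.
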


\begin{proof}
For any $y\in X$, all indices with $w_{j,N}=y$ are counted in
$W_N\cap B(y,N^{-1/2})$.  Taking $R=1$ in
\eqref{eq:design-local-count} proves
\eqref{eq:design-bounded-multiplicity}; division by $N$ gives the assertion
about the empirical measure.
\end{proof}

\begin{corollary}[Optimal transport in every Wasserstein order]
\label{cor:design-direct-transport}
For every $1\leq p\leq\infty$, the partition-compatible designs satisfy
\begin{equation}\label{eq:design-direct-Wasserstein-p}
 c_XN^{-1/2}\leq W_p(\nu_{W_N},\mu)\leq C_XN^{-1/2}.
\end{equation}
Here $W_\infty$ is defined by minimizing the essential supremum of the
transport distance, and the estimate holds for any fixed Riemannian ground
metric on $X$.
\end{corollary}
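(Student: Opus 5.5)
The upper bound comes from an explicit coupling, and the lower bound from a small-ball mass count. The two bounds are quite different in nature.

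\emph{Upper bound.} Use the labeled partition $X=R_1\cup\cdots\cup R_N$ of \Cref{lem:partition-compatible-design}. Define the transport plan
\[
 \pi:=\sum_{j=1}^N\mu|_{R_j}\otimes\delta_{w_{j,N}}.
\]
Its first marginal is $\mu$, since the cells cover $X$ and overlap only on null boundaries. Its second marginal is $\sum_j\mu(R_j)\delta_{w_{j,N}}=\nu_{W_N}$, because $\mu(R_j)=1/N$; repeated nodes are simply counted with multiplicity. Every pair $(x,w_{j,N})$ in the support of $\pi$ has $x\in R_j$. So by the covering computation preceding \eqref{eq:design-covering-radius},
\[
 d(x,w_{j,N})\leq(C_{\rm part}+C_{\rm mov})N^{-1/2}.
\]
Hence the transport cost is at most $C_XN^{-1/2}$ in $L^\infty(\pi)$, and a fortiori in every $L^p(\pi)$. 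This gives $W_\infty\leq C_XN^{-1/2}$, and therefore $W_p\leq W_\infty\leq C_XN^{-1/2}$ for all $1\leq p\leq\infty$, because $\pi$ is a probability measure.

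\emph{Lower bound.} It suffices to prove $W_1(\nu,\mu)\geq c_XN^{-1/2}$ for any probability $\nu$ supported on at most $N$ points, since $W_p\geq W_1$ by Hölder. Let $S$ be the support of $\nu$ and set $\rho=\epsilon N^{-1/2}$. By the small-ball area comparison, $\mu(B(y,\rho))\leq C\epsilon^2/N$ for every $y$, once $\rho$ is below a fixed radius. Hence
\[
 \mu\bigl(\{x:d(x,S)<\rho\}\bigr)\leq N\cdot C\epsilon^2/N=C\epsilon^2\leq\tfrac12
\]
for a fixed small $\epsilon$. Any coupling must move all of $\mu$ onto $S$. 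A set of $\mu$-mass at least $1/2$ lies at distance at least $\rho$ from $S$, so every coupling has cost at least $\rho/2=(\epsilon/2)N^{-1/2}$. This gives $W_1\geq c_XN^{-1/2}$.

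\emph{Change of ground metric.} Both bounds hold for the distance $d$ of the metric with area form $\mu$. Any other fixed Riemannian metric on the compact surface $X$ is bi-Lipschitz equivalent to it. Replacing the ground metric multiplies every $W_p$ by at most a fixed factor, which is absorbed into $c_X,C_X$.

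\emph{Expected obstacle.} The only genuinely delicate point is the uniform small-ball estimate $\mu(B(y,\rho))\leq C\rho^2$ in the lower bound, which must hold uniformly in the center. It follows from compactness and smoothness: for $\rho$ below the injectivity radius, normal-coordinate comparison gives it with a uniform constant. The finitely many small $N$ for which $\epsilon N^{-1/2}$ exceeds that radius are handled by shrinking $c_X$. This uses $W_1(\nu,\mu)>0$, which holds because $\mu$ is nonatomic and so cannot equal a finitely supported $\nu$.
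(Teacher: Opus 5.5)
Your proposal is correct and is essentially the paper's proof. The same cell-to-node plan $\sum_j(\mathrm{id},w_{j,N})_\#(\mu|_{R_j})$ gives the upper bound for every $p$ at once, and your lower bound is the coupling-side form of the paper's Kantorovich--Rubinstein test with $\psi=d(\cdot,S)$, using the same small-ball mass count. One small point: the small-$N$ aside is unnecessary, because choosing $\epsilon$ below the fixed small radius makes $\epsilon N^{-1/2}$ admissible for every $N\geq1$. As written, pointwise positivity of each $W_1(\nu,\mu)$ would not by itself give a constant uniform over all $\nu$ supported on at most $N$ points.
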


\begin{proof}
We first use the Route B distance $d$, whose area form is $\mu$.  Any other
fixed Riemannian distance on the compact surface is bi-Lipschitz equivalent
to $d$, and the same comparison constant transfers every $W_p$ estimate,
including $p=\infty$, uniformly in $p$.
Up to their pairwise null intersections, the sets $R_j$ partition $X$ and
have mass $1/N$.  Define a transport plan on $X\times X$ by
\[
 \pi_N:=\sum_{j=1}^N
       (\operatorname{id},w_{j,N})_\#(\mu|_{R_j}).
\]
Its first marginal is $\mu$, and its second marginal is
$N^{-1}\sum_j\delta_{w_{j,N}}=\nu_{W_N}$.  By
\eqref{eq:design-partition-geometry} and
\eqref{eq:design-node-displacement},
\[
 \begin{aligned}
 \int_{X\times X}d(x,y)\,d\pi_N(x,y)
 &=\sum_{j=1}^N\int_{R_j}d(x,w_{j,N})\,d\mu(x)\\
 &\leq(C_{\rm part}+C_{\rm mov})N^{-1/2}
       \sum_{j=1}^N\mu(R_j)
 \leq C_XN^{-1/2}.
 \end{aligned}
\]
Every pair in the support of $\pi_N$ has distance at most
$C_XN^{-1/2}$.  Thus the same plan gives
\[
 \left(\int d(x,y)^p\,d\pi_N(x,y)\right)^{1/p}
 \leq C_XN^{-1/2}
\]
for $1\leq p<\infty$, and its essential maximal cost obeys the same bound
when $p=\infty$.  Taking the infimum over plans proves the upper estimate in
\eqref{eq:design-direct-Wasserstein-p}.  The monotonicity
$W_p\geq W_1$ on probability measures, together with the atomic lower bound
\eqref{eq:Wasserstein-lower} proved in \Cref{sec:consequences}, gives the lower
estimate.  That bound depends only on $\nu$ being supported on at most $N$
points, so there is no circularity.
\end{proof}

\begin{corollary}[Sharp H\"older quadrature scale]
\label{cor:design-holder-discrepancy}
For $0<\alpha\leq1$, measure H\"older discrepancy in the fixed metric $g$ by
\[
 [f]_{C^{0,\alpha}}
 :=\sup_{x\neq y}\frac{|f(x)-f(y)|}{d_g(x,y)^\alpha},
 \qquad
 \mathcal D_\alpha(\nu,\mu)
 :=\sup_{\substack{f\in C^{0,\alpha}(X;\mathbb R)\\
                   [f]_{C^{0,\alpha}}\leq1}}
       \left|\int_Xf\,d(\nu-\mu)\right|.
\]
Then the partition-compatible designs satisfy
\begin{equation}\label{eq:design-holder-discrepancy}
 c_XN^{-\alpha/2}
 \leq\mathcal D_\alpha(\nu_{W_N},\mu)
 \leq C_XN^{-\alpha/2}.
\end{equation}
More generally, the lower bound holds for every probability measure
supported on at most $N$ points.  Replacing $g$ by any other fixed
Riemannian metric changes only the constants.
\end{corollary}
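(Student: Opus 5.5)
The upper bound comes from the same partition-to-node coupling used in \Cref{cor:design-direct-transport}. The lower bound reduces to a covering/area argument for an arbitrary atomic measure with at most $N$ atoms.

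\emph{Upper bound.} Fix $f$ with $[f]_{C^{0,\alpha}}\leq1$ in the metric $g$. Since $d_g$ and the Route~B distance $d$ are bi-Lipschitz equivalent, \eqref{eq:design-partition-geometry} and \eqref{eq:design-node-displacement} give $d_g(x,w_{j,N})\leq C_XN^{-1/2}$ for every $x\in R_j$. Because $\mu(R_j)=1/N$ and the $R_j$ cover $X$ up to null sets,
\[
 \int_Xf\,d(\nu_{W_N}-\mu)=\sum_{j=1}^N\int_{R_j}\bigl(f(w_{j,N})-f(x)\bigr)\,d\mu(x).
\]
Its absolute value is at most $\sum_j\mu(R_j)\,(C_XN^{-1/2})^\alpha=C_X^\alpha N^{-\alpha/2}$. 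Equivalently, $\mathcal D_\alpha\leq\int d_g^\alpha\,d\pi_N\leq W_\infty^\alpha$ for the plan $\pi_N$ already constructed.

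\emph{Lower bound for any $\nu$ with at most $N$ atoms $y_1,\dots,y_N$.} Take the test function
\[
 f(x)=\min_k d_g(x,y_k)^\alpha .
\]
For $0<\alpha\leq1$ the map $t\mapsto t^\alpha$ is subadditive, so $d_g(\cdot,y_k)^\alpha$ is $\alpha$-H\"older with seminorm at most $1$. A pointwise minimum of such functions keeps this bound, so $[f]_{C^{0,\alpha}}\leq1$. Moreover $f=0$ on the support of $\nu$, hence $\int f\,d\nu=0$, and therefore
\[
 \mathcal D_\alpha(\nu,\mu)\geq\int_Xf\,d\mu .
\]
Set $\rho=c\,N^{-1/2}$. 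Uniform small-ball area comparison for the smooth positive form $\mu$ gives $\mu(B_g(y,\rho))\leq C\rho^2$ for all $y\in X$ and all $\rho$ below a fixed radius. Hence
\[
 \mu\Bigl(\bigcup_kB_g(y_k,\rho)\Bigr)\leq NC\rho^2=Cc^2\leq\tfrac12
\]
once $c$ is chosen small. On the complement, which has mass at least $1/2$, we have $f\geq\rho^\alpha$. Therefore $\int f\,d\mu\geq\tfrac12c^\alpha N^{-\alpha/2}$. This covers the designs in particular, and it is the same mechanism as \eqref{eq:Wasserstein-lower}.

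\emph{Change of metric and main obstacle.} Replacing $g$ by another fixed metric $g'$ with $d_{g'}\leq\Lambda_0 d_g$ and $d_g\leq\Lambda_0 d_{g'}$ changes the seminorm by a factor at most $\Lambda_0^\alpha\leq\Lambda_0$. This rescales the admissible class and so alters only the constants. The only point requiring care is the requirement that the ball radius $\rho$ be below the fixed comparison radius. This holds once $N$ exceeds a fixed threshold. For the finitely many smaller $N$, both bounds hold after enlarging or shrinking the constants: the lower bound because $\mathcal D_\alpha$ is strictly positive for atomic $\nu$, by the same test function, and is bounded below uniformly over the compact set of configurations in $X^N$ by continuity in the atoms.
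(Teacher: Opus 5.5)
Your proof is correct and is essentially the paper's own argument: the upper bound comes from the cell-to-node coupling, which gives $d_g(x,w_{j,N})\leq C_XN^{-1/2}$ on $R_j$, and the lower bound comes from testing with $d_g(\cdot,S)^\alpha$ and bounding the $\mu$-mass of the union of the $cN^{-1/2}$-balls by $1/2$. Two small remarks: first, choosing $c$ below the fixed small-ball radius, as the paper does, makes $\rho=cN^{-1/2}$ admissible for every $N\geq1$, so your compactness argument for small $N$ is not needed; second, the aside $\int d_g^\alpha\,d\pi_N\leq W_\infty^\alpha$ is not valid for the particular plan $\pi_N$ (what holds is $\int d_g^\alpha\,d\pi_N\leq(\sup_{\operatorname{supp}\pi_N}d_g)^\alpha\leq C_X^\alpha N^{-\alpha/2}$), though nothing in your argument depends on it.
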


\begin{proof}
For the upper bound, bi-Lipschitz equivalence of the fixed metrics and the
cell-to-node labeling give $d_g(x,w_{j,N})\leq C_XN^{-1/2}$ for
$x\in R_j$.  Hence
\[
 \begin{aligned}
 \left|\int_Xf\,d(\nu_{W_N}-\mu)\right|
 &=\left|\sum_{j=1}^N\int_{R_j}
        \bigl(f(w_{j,N})-f(x)\bigr)\,d\mu(x)\right|\\
 &\leq [f]_{C^{0,\alpha}}
       C_X^\alpha N^{-\alpha/2}\sum_{j=1}^N\mu(R_j).
 \end{aligned}
\]
This proves the desired estimate after absorbing $C_X^\alpha$ into a
constant uniform for $0<\alpha\leq1$.

For the lower bound, let $S$ be the support of a probability measure $\nu$
with $\#S\leq N$ and put $\psi(x)=d_g(x,S)^\alpha$.  Since
$|a^\alpha-b^\alpha|\leq|a-b|^\alpha$ for $a,b\geq0$, one has
$[\psi]_{C^{0,\alpha}}\leq1$, while $\int_X\psi\,d\nu=0$.
Choose a small-ball constant $C_{\rm vol}$ such that
$\mu(B_g(x,r))\leq C_{\rm vol}r^2$, and enlarge it so that
$C_{\rm vol}\geq1$ and
$r_N=(2C_{\rm vol}N)^{-1/2}$ is always in the permitted small-radius range.
The union of the $r_N$-balls around $S$ has measure at most $1/2$.
Consequently,
\[
 \int_X\psi\,d\mu
 \geq\frac12r_N^\alpha
 =\frac12(2C_{\rm vol})^{-\alpha/2}N^{-\alpha/2}
 \geq\frac12(2C_{\rm vol})^{-1/2}N^{-\alpha/2}.
\]
Testing the discrepancy with $\psi$ proves the lower bound.
The final metric assertion follows from bi-Lipschitz equivalence on the
compact surface, which compares the two $C^{0,\alpha}$ seminorms by fixed
powers of the Lipschitz constants.  Those powers are uniformly bounded for
$0<\alpha\leq1$.  Thus both constants in
\eqref{eq:design-holder-discrepancy} may be chosen independently of
$\alpha$ in this interval.
\end{proof}

\begin{remark}
Because $d_g^\alpha$ is again a metric for $0<\alpha\leq1$,
$\mathcal D_\alpha$ is exactly the Kantorovich--Rubinstein $W_1$ distance
computed with the snowflaked ground metric $d_g^\alpha$.  Thus
\eqref{eq:design-holder-discrepancy} is an optimal quantization statement,
not only a one-sided quadrature estimate.
\end{remark}

Choose $C_{\rm des}$ so that
\Cref{lem:partition-compatible-design} applies whenever
$N\geq C_{\rm des}\Lambda^2$.  Choose a fixed
$a>0$ so small that $4C_{\rm des}a^2\leq1$, put $\Lambda=a\sqrt N$, and apply
their theorem with bandwidth $2\Lambda$.  The numerical choice ensures
$N\geq C_{\rm des}(2\Lambda)^2$.  Then both
\eqref{eq:diffusion-design} and \eqref{eq:design-local-count} hold, with
$N^{-1/2}$ comparable to $\Lambda^{-1}$.  The bandwidth is $2\Lambda$ rather than $\Lambda$ because the spectral cutoff
introduced in \eqref{eq:green-low-frequency} below is supported in
$\lambda_\nu\leq2\Lambda$, so cancellation must be available up to that
frequency.

\subsection{The high-frequency Green remainder}

Fix $\chi\in C_c^\infty([0,\infty))$ with
$\chi=1$ on $[0,1]$ and $\chi=0$ on $[2,\infty)$.  Because the Riemannian
area form is $\mu$ and $\Delta$ is nonnegative, our convention for $d^c$
gives
\[
 \ddc f=-\frac1{2\pi}(\Delta f)\,\mu.
\]
Consequently, \eqref{eq:green-normalization} fixes the spectral coefficient:
\[
 U_X(z,w)=-2\pi\sum_{\nu\geq1}
 \frac{\varphi_\nu(z)\varphi_\nu(w)}{\lambda_\nu^2}
\]
in the distributional sense.  Define its smooth low-frequency part by
\begin{equation}\label{eq:green-low-frequency}
 U_{\leq \Lambda}(z,w):=-2\pi\sum_{\nu\geq1}
 \chi(\lambda_\nu/\Lambda)
 \frac{\varphi_\nu(z)\varphi_\nu(w)}{\lambda_\nu^2}
\end{equation}
and set $R_\Lambda:=U_X-U_{\leq \Lambda}$.

\begin{lemma}[Localized Green remainder]\label{lem:green-high-frequency}
For every $M>0$ there is $C_M$ such that, for $\Lambda\geq1$ and $z\neq w$,
\begin{equation}\label{eq:green-high-frequency-bound}
 R_\Lambda(z,w)\leq C_M(1+\Lambda d(z,w))^{-M}.
\end{equation}
\end{lemma}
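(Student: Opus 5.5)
We need to bound $R_\Lambda(z,w)=-2\pi\sum_{\nu\ge1}(1-\chi(\lambda_\nu/\Lambda))\varphi_\nu(z)\varphi_\nu(w)/\lambda_\nu^2$ from above, with decay in $\Lambda d(z,w)$ and only an upper bound near the diagonal. The plan is to write $R_\Lambda$ as a function of $\Delta$ and express it through the wave or heat kernel. Take the form
\[
 R_\Lambda=-2\pi\,F_\Lambda(\sqrt\Delta),\qquad F_\Lambda(\lambda):=\frac{1-\chi(\lambda/\Lambda)}{\lambda^2},
\]
and split off the singular part with the heat kernel $p_t$. For $\lambda>0$,
\[
 \frac{1}{\lambda^2}=\int_0^\infty e^{-t\lambda^2}\,dt .
\]
So on the orthogonal complement of constants, $-2\pi\int_0^{T}(p_t-1)\,dt$ with $T=\Lambda^{-2}$ captures the "short-time" part. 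Since $p_t\ge0$, this part is bounded above by $2\pi T=2\pi\Lambda^{-2}$. Gaussian off-diagonal bounds $p_t(z,w)\le Ct^{-1}e^{-d^2/(Ct)}$ for $t\le T$ show its contribution is in fact $\le C\Lambda^{-2}$ from above. This is the source of the one-sidedness: the logarithmic singularity is the negative quantity $-2\pi\int_0^T p_t\,dt$, which we simply discard.

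Concretely, decompose
\[
 R_\Lambda=-2\pi\int_0^{T}(p_t-1)\,dt \;-\;2\pi\sum_{\nu\ge1}\Bigl[(1-\chi(\lambda_\nu/\Lambda))\lambda_\nu^{-2}-\lambda_\nu^{-2}(1-e^{-T\lambda_\nu^2})\Bigr]\varphi_\nu(z)\varphi_\nu(w).
\]
The bracket equals $G_\Lambda(\lambda_\nu)$, where
\[
 G_\Lambda(\lambda)=\lambda^{-2}\bigl(e^{-\lambda^2/\Lambda^2}-\chi(\lambda/\Lambda)\bigr)=\Lambda^{-2}g(\lambda/\Lambda),
\]
with $g(s)=s^{-2}(e^{-s^2}-\chi(s))$. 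Since $\chi=1$ near $0$, the numerator is $e^{-s^2}-1=O(s^2)$ near $0$, so $g$ is smooth and even (taking $\chi$ to have even extension, which we may assume after choosing $\chi$ constant near $0$) and of Schwartz class. The first term is handled as above and gives an upper bound $C\Lambda^{-2}$, which is even better than needed, together with Gaussian decay in $\Lambda d$ apart from the harmless $+2\pi T$.

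The main step is to show that the smooth kernel $K_\Lambda(z,w)=\Lambda^{-2}\sum_\nu g(\lambda_\nu/\Lambda)\varphi_\nu(z)\varphi_\nu(w)$ satisfies $|K_\Lambda(z,w)|\le C_M\Lambda^{-2}\cdot\Lambda^{2}(1+\Lambda d)^{-M}=C_M(1+\Lambda d)^{-M}$. We also need to subtract the $\nu=0$ term consistently; it is the constant $g(0)\Lambda^{-2}$, which is bounded. This is the standard localization estimate for smooth spectral multipliers $g(\sqrt\Delta/\Lambda)$ with $g$ even Schwartz on a compact manifold, in dimension $2$. To prove it, write
\[
 g(\sqrt\Delta/\Lambda)=\frac{1}{2\pi}\int\hat g(\tau)\cos(\tau\sqrt\Delta/\Lambda)\,d\tau.
\]
Then split $\hat g=\hat g\,\rho(\tau/(\Lambda\epsilon))+\text{rest}$, with $\rho$ a cutoff near zero. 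By finite propagation speed, the first part is supported in $d\le\epsilon$ (so it is irrelevant once $d>\epsilon$), and for $d\le\epsilon$ one uses Hadamard-parametrix or local Weyl bounds. The rest has $\hat g$ rapidly decaying on $|\tau|\ge\epsilon\Lambda$, giving $O(\Lambda^{-\infty})$ via Sobolev embedding and the bound $\|\cos(s\sqrt\Delta)\|$ on spectral projectors. The pointwise $\Lambda^2(1+\Lambda d)^{-M}$ bound near the diagonal comes from repeated integration by parts against $d^{-2k}$ using finite propagation speed at scale $d$. This is the step I expect to be the main obstacle: it needs either citing known results (Sogge's or Taylor's functional calculus localization) or running the wave-kernel argument with uniform constants. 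I would cite the standard estimate and note that the heat-kernel term supplies the negative diagonal singularity. Combining the two pieces yields \eqref{eq:green-high-frequency-bound}.
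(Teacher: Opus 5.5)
Your decomposition is the same identity the paper uses near the diagonal: your $g$ is the paper's $m$, with $g(0)=-1$. The difference is that you apply it at all distances, and there your treatment of the constant terms fails.

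The heat piece is $2\pi\Lambda^{-2}-2\pi\int_0^{\Lambda^{-2}}p_t(z,w)\,dt$. For $d(z,w)\asymp1$, the Gaussian bounds you quote make the integral $O(\Lambda^{-\infty})$, so this piece is essentially $+2\pi\Lambda^{-2}$ there. The lemma, however, demands $R_\Lambda(z,w)\leq C_M\Lambda^{-M}$ at such points for every $M$, and $\Lambda^{-2}$ is not of that size once $M>2$. So ``$C\Lambda^{-2}$, which is even better than needed'' and ``the harmless $+2\pi T$'' are false at macroscopic distances, indeed whenever $\Lambda d\gg\Lambda^{2/M}$. Likewise, the zero-mode correction $\Lambda^{-2}g(0)$ is not harmless merely because it is bounded.

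Estimating the three pieces separately, as you propose, only gives $R_\Lambda\leq C_M(1+\Lambda d)^{-M}+C\Lambda^{-2}$. That weaker bound would still close Route~B, since $N\Lambda^{-2}=a^{-2}$, but it is not the lemma.

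The repair is that the two constants cancel exactly. Because $T=\Lambda^{-2}$ and $g(0)=-1$, the even function $\int_0^{\Lambda^{-2}}e^{-t\lambda^2}\,dt+\Lambda^{-2}g(\lambda/\Lambda)$ equals $(1-\chi(\lambda/\Lambda))\lambda^{-2}$ for $\lambda\neq0$ and vanishes at $\lambda=0$. Hence, with the full functional calculus (constant mode included),
\[
 R_\Lambda(z,w)=-2\pi\int_0^{\Lambda^{-2}}p_t(z,w)\,dt
 -2\pi\Lambda^{-2}\bigl[g(\sqrt\Delta/\Lambda)\bigr](z,w).
\]
The first term is $\leq0$ by positivity of the heat kernel alone, so the Gaussian bounds are not needed. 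The second term is $O_M((1+\Lambda d)^{-M})$ by the localization estimate for the fixed even Schwartz multiplier $g$.

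Once repaired, your route is correct and genuinely different from the paper's.
\begin{itemize}
\item The paper uses the heat/Schwartz split only for $\Lambda d\leq1$. There an $O(1)$ bound suffices, so the stray $2\pi\Lambda^{-2}$ does no harm, and only a \emph{uniform} bound on the $m$-kernel is needed (Cauchy--Schwarz plus the on-diagonal heat bound).
\item For $\Lambda d\geq1$, the paper splits $(1-\chi(\lambda/\Lambda))\lambda^{-2}$ into dyadic pieces supported in $\lambda\asymp2^k\Lambda$, away from $0$, and sums the cited localization bounds.
\item Your version handles both regimes with one identity and one multiplier. The price is that it needs off-diagonal decay for a multiplier that does \emph{not} vanish at $0$, where the decay comes from smoothness of the even extension at the origin rather than from support away from it.
\end{itemize}

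That estimate is standard via finite propagation speed. It is cleaner than your sketch if you cut $\hat g$ at $|\tau|\approx\frac12\Lambda d(z,w)$ rather than at a fixed $\epsilon\Lambda$; for $\Lambda d\leq1$, Cauchy--Schwarz alone suffices. With that cut:
\begin{itemize}
\item The inner part involves only wave kernels $\cos(s\sqrt\Delta)$ with $|s|<d(z,w)$, which vanish near $(z,w)$.
\item The outer part is a multiplier $g_2$ with $\sup_\lambda(1+|\lambda|)^4|g_2(\lambda)|=O_M((\Lambda d)^{-M})$.
\item Its kernel is therefore $O_M(\Lambda^2(\Lambda d)^{-M})$, by Cauchy--Schwarz and $\sup_z\sum_\nu(1+\lambda_\nu/\Lambda)^{-4}\varphi_\nu(z)^2\lesssim\Lambda^2$.
\end{itemize}
No Hadamard parametrix is required.
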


\begin{proof}
When $\Lambda d(z,w)\geq1$, decompose the multiplier
$(1-\chi(\lambda/\Lambda))\lambda^{-2}$ into smooth dyadic annuli
$\lambda\asymp2^k\Lambda$, $k\geq0$, using a smooth partition of unity on the
high-frequency half-line.  The standard localization estimate for smooth
spectral multipliers on a compact two-manifold states that, for every
$h\in C_c^\infty((1/2,2))$ and every $T\geq1$,
\begin{align*}
 K_{h,T}(z,w)
 &:=\sum_\nu h(\lambda_\nu/T)
   \varphi_\nu(z)\varphi_\nu(w),\\
 |K_{h,T}(z,w)|
 &\leq C_{M,h}T^2(1+T d(z,w))^{-M}.
\end{align*}
After even extension and a fixed rescaling, this is
\cite[Theorem~10]{GariboldiGigante}, applied with the derivative orders
$\ell=m=0$ in their notation.  Applying it to the normalized multiplier on
the $k$-th annulus gives
\[
 C_M(1+2^k\Lambda d(z,w))^{-M}.
\]
Indeed, the kernel itself has size $(2^k\Lambda)^2$, while the factor
$\lambda^{-2}$ contributes $(2^k\Lambda)^{-2}$.  Since $\Lambda d(z,w)\geq1$,
\[
 \sum_{k\geq0}(1+2^k\Lambda d(z,w))^{-M}
 \leq C_M(1+\Lambda d(z,w))^{-M}.
\]
Summing in $k$ proves
\eqref{eq:green-high-frequency-bound} off the $\Lambda^{-1}$-neighborhood of the
diagonal.

The dyadic partition is fixed once and for all.  After rescaling the
$k$-th annulus to $(1/2,2)$, the normalized multiplier belongs to a bounded
set in every $C^s$ seminorm, uniformly in $k$ and $\Lambda$.  Hence the constants
in the multiplier estimate are uniform over all annuli, as required by the
preceding sum.

It remains to treat $0<d(z,w)\leq \Lambda^{-1}$.  Let $\Pi_0$ denote orthogonal
projection onto the constants.  At the operator level the definition of
$R_\Lambda$ is the exact identity
\begin{equation}\label{eq:green-remainder-operator}
 R_\Lambda=-2\pi\bigl(1-\chi(\sqrt{\Delta}/\Lambda)\bigr)
 \Delta^{-1}(I-\Pi_0).
\end{equation}
For an upper bound near the diagonal, a heat-kernel comparison avoids any
local oscillatory-integral expansion.  Put
\[
 U_{\mathrm{heat},\Lambda}:=e^{-\Delta/\Lambda^2}U_X.
\]
Since
\[
 (I-e^{-\Delta/\Lambda^2})\Delta^{-1}(I-\Pi_0)
 =\int_0^{\Lambda^{-2}}e^{-t\Delta}(I-\Pi_0)\,dt,
\]
the kernel of $U_X-U_{\mathrm{heat},\Lambda}$ is
\begin{equation}\label{eq:green-heat-remainder}
 -2\pi\int_0^{\Lambda^{-2}}\bigl(H_t(z,w)-1\bigr)\,dt,
\end{equation}
where $H_t$ is the heat kernel relative to $\mu$.  Here the kernel of
$\Pi_0$ is $1$ because $\mu(X)=1$.  Positivity of $H_t$ therefore gives the
pointwise upper bound
\begin{equation}\label{eq:green-heat-remainder-upper}
 U_X(z,w)-U_{\mathrm{heat},\Lambda}(z,w)\leq 2\pi \Lambda^{-2}.
\end{equation}

It remains only to compare the heat cutoff with the compact spectral cutoff.
Extend $\chi$ evenly to $\R$ and define
\[
 m(r):=
 \begin{cases}
  \dfrac{e^{-r^2}-\chi(r)}{r^2},&r\neq0,\\[6pt]
  -1,&r=0.
 \end{cases}
\]
Because $\chi=1$ near the origin, $m$ is an even Schwartz function.  Spectral
calculus gives
\begin{equation}\label{eq:green-cutoff-comparison}
 U_{\mathrm{heat},\Lambda}-U_{\leq \Lambda}
 =-2\pi \Lambda^{-2}m(\sqrt\Delta/\Lambda)(I-\Pi_0).
\end{equation}
For completeness, its kernel is uniformly bounded without invoking a
parameter-dependent Green parametrix.  Since
$|m(r)|\leq C(1+r^2)^{-2}$, Cauchy--Schwarz reduces the claim to
\[
 \sup_{z\in X}\sum_{\nu\geq0}
 (1+\lambda_\nu^2/\Lambda^2)^{-2}|\varphi_\nu(z)|^2\leq C\Lambda^2.
\]
Using
\[
 (1+x)^{-2}=\int_0^\infty t e^{-t}e^{-tx}\,dt
\]
and the standard diagonal heat-kernel estimate
$H_s(z,z)\leq C(1+s^{-1})$ on a compact surface
\cite[Ch.~15]{Grigoryan}, the left-hand side is at
most
\[
 C\int_0^\infty t e^{-t}
 \bigl(1+\Lambda^2/t\bigr)\,dt\leq C\Lambda^2.
\]
Thus the kernel in \eqref{eq:green-cutoff-comparison} has absolute value at
most a constant independent of $\Lambda,z,w$.  Combining this fact with
\eqref{eq:green-heat-remainder-upper} gives
\begin{equation}\label{eq:green-remainder-near-diagonal}
 \sup_{0<d(z,w)\leq \Lambda^{-1}}R_\Lambda(z,w)\leq C.
\end{equation}
Since $1+\Lambda d(z,w)\leq2$ in this region, enlarging $C_M$ by a factor depending
only on $M$ turns this estimate into
\eqref{eq:green-high-frequency-bound} near the diagonal as well.
This completes the proof for every $\Lambda\geq1$.
\end{proof}

\subsection{Completion of Route B}

Because $W_N$ is a $2\Lambda$-design and every nonconstant eigenfunction has mean
zero, \eqref{eq:green-low-frequency} gives
\begin{equation}\label{eq:low-frequency-cancellation}
 \sum_{j=1}^NU_{\leq \Lambda}(z,w_{j,N})=0.
\end{equation}
Indeed, for every index in the support of the cutoff,
\[
 \sum_{j=1}^N\varphi_\nu(w_{j,N})
 =N\int_X\varphi_\nu\,d\mu=0,
\]
and the low-frequency sum is finite, so cancellation may be performed term
by term.
Consequently
\[
 V_{W_N}(z)=\sum_{j=1}^NR_\Lambda(z,w_{j,N}).
\]
If $z$ is one of the nodes, the left-hand side is $-\infty$ and the desired
upper bound is automatic.  Otherwise divide $X$ into the annuli
\[
 m\leq \Lambda d(z,w)<m+1,\qquad m=0,1,2,\ldots.
\]
By \eqref{eq:design-local-count}, the number of nodes in the $m$-th annulus
is at most the number in the ball of radius $(m+1)\Lambda^{-1}$, hence at most
$C(m+2)^2$.  Indeed, since $\Lambda=a\sqrt N$, this ball has radius
$R N^{-1/2}$ with $R=(m+1)/a$, and
\eqref{eq:design-local-count} gives
$C(1+(m+1)/a)^2\leq C_a(m+2)^2$.  Taking $M>3$ in
\Cref{lem:green-high-frequency} therefore yields
\begin{equation}\label{eq:route-b-final-sum}
 V_{W_N}(z)
 \leq C_M\sum_{m=0}^\infty (m+2)^2(1+m)^{-M}\leq C.
\end{equation}
The series converges precisely because its summand is
$O(m^{2-M})$ and $M>3$.
This is \eqref{eq:route-b-upper-potential}, uniformly in $N$ and $z$.
We have proved the following independent version of the main result.

\begin{theorem}[Spectral Green quadrature, or Route B]\label{thm:route-b}
There is a constant $a=a(X,\mu)>0$ such that, for every sufficiently large
$N$, there is a configuration $W_N\in X^N$
which is an exact equal-weight quadrature rule for all Laplace frequencies
$\lambda_\nu\leq2a\sqrt N$, satisfies the local counting estimate
\eqref{eq:design-local-count}, the natural-scale covering bound
\eqref{eq:design-covering-radius}, the bounded-multiplicity estimate
\eqref{eq:design-bounded-multiplicity}, and the all-order transport bounds
\eqref{eq:design-direct-Wasserstein-p}, as well as the sharp H\"older
quadrature estimate \eqref{eq:design-holder-discrepancy}.  It also obeys
\eqref{eq:route-b-upper-potential}.  Hence
$B_\beta(W_N;X)\leq C_{X,\mu,\beta}$ for every $\beta>0$.  This is exactly
the uniform upper bound asserted in \Cref{thm:main}; the finitely many
smaller values of $N$ are handled as in \Cref{sec:main-proof}.  This proof
does not use holomorphic quantitative transversality.
\end{theorem}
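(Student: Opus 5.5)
The statement of \Cref{thm:route-b} is essentially a collection of facts already established in this section, so the proof will be an assembly rather than a new argument. I would fix $a>0$ once, as above, with $4C_{\rm des}a^2\leq1$, put $\Lambda=a\sqrt N$, and for $N$ large enough that $\Lambda\geq1$ invoke \Cref{lem:partition-compatible-design} at bandwidth $2\Lambda$. This produces a labeled $2\Lambda$-design $W_N$ together with an area-regular partition and node displacements of order $N^{-1/2}$. The exactness of the quadrature for all $\lambda_\nu\leq2a\sqrt N$ is then just the design identity \eqref{eq:diffusion-design} at bandwidth $2\Lambda$.

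Next I would read off the geometric properties from the same partition. The local counting estimate \eqref{eq:design-local-count} comes from the disjoint inner balls and an area comparison. The covering bound \eqref{eq:design-covering-radius} comes from the diameter estimate plus displacement. Bounded multiplicity is \Cref{cor:design-bounded-multiplicity}. The Wasserstein bounds are \Cref{cor:design-direct-transport}, and the Hölder estimate is \Cref{cor:design-holder-discrepancy}.

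For the upper bound \eqref{eq:route-b-upper-potential} I would proceed in three steps:
\begin{enumerate}[label=\textup{(\arabic*)}]
\item Low-frequency cancellation. Since the cutoff in \eqref{eq:green-low-frequency} is supported in $\lambda_\nu\leq2\Lambda$ and $\nu\geq1$ excludes constants, the design gives \eqref{eq:low-frequency-cancellation}. Hence $V_{W_N}=\sum_jR_\Lambda(\cdot,w_{j,N})$.
\item Localized remainder. \Cref{lem:green-high-frequency} with a fixed $M>3$ bounds each term from above by $C_M(1+\Lambda d)^{-M}$. Nodes coinciding with $z$ give $-\infty$ and only help.
\item Annular summation. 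Split into $\Lambda^{-1}$-annuli around $z$. Local counting, with $R=(m+1)/a$, bounds the number of nodes in the $m$-th annulus by $C_a(m+2)^2$. The resulting series $\sum_m (m+2)^2(1+m)^{-M}$ converges, uniformly in $z$ and $N$.
\end{enumerate}
Exponentiating then gives $B_\beta(W_N;X)\leq e^{\beta C}$ for every $\beta>0$. The finitely many $N$ below the threshold are handled as in \Cref{sec:main-proof}: for any configuration the potential has finite supremum, because $U_X(\cdot,w)$ is bounded above.

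The only genuinely delicate input is \Cref{lem:green-high-frequency}, specifically its near-diagonal part, where only a one-sided bound is available. There I would rely on the heat-kernel comparison already given: positivity of $H_t$ yields the upper bound \eqref{eq:green-heat-remainder-upper}, and the Schwartz multiplier $m$ has a uniformly bounded kernel. Since the lemma is already proved, the remaining point to check is that the constants $a$, $C_{\rm des}$ and $C_M$ are all chosen independently of $N$ and in a non-circular order. That order is: first $C_{\rm des}$, then $a$, then $M$, then the final constant.
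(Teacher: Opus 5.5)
Your proposal is correct and follows the paper's own argument essentially step by step. It makes the same choices $4C_{\rm des}a^2\leq1$ and $\Lambda=a\sqrt N$, applies \Cref{lem:partition-compatible-design} at bandwidth $2\Lambda$, reads the geometric properties off that lemma and its corollaries, and obtains the potential bound via \eqref{eq:low-frequency-cancellation}, \Cref{lem:green-high-frequency} with $M>3$, and annular summation with $R=(m+1)/a$; your explicit requirement $\Lambda\geq1$ (i.e.\ $N\geq a^{-2}$) is bookkeeping the paper absorbs into ``sufficiently large $N$''.
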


A comparison with the more elementary approach one might try first is
instructive.

\begin{remark}[Balanced cells, for comparison]
The cell-moment conditions suggested by the elementary quadrature approach
are sufficient but not necessary.  The spectral design cancels all modes
up to the natural frequency $\Lambda\asymp\sqrt N$ at once; the localized remainder
estimate controls all smaller scales.  Thus it supplies the kind of
balanced Green quadrature estimate one would obtain from individually
isotropic cells, without having to construct such cells.
\end{remark}


\section{Immediate consequences and comparison of the two constructions}
\label{sec:consequences}

Both routes are now complete, and we record the distributional consequence that
follows from either.  Its mechanism is independent of the constructions, using
only the uniform exponential bound.  The systematic
development of this observation --- sharp quantization for every concave
modulus of continuity, mesoscopic counting, negative Sobolev estimates, and the
variational theory of minimizing configurations --- is carried out in the
companion paper \cite{PaperB}, whose hypotheses are supplied by
\Cref{thm:main}.

\begin{theorem}[Quantitative equidistribution]\label{thm:equidistribution}
For the divisors $D_N$ above, the probability measures
\[
 \nu_N:=\frac1N[D_N]=\frac1N\sum_{j=1}^{N}\delta_{w_{j,N}}
\]
obey
\begin{equation}\label{eq:quantitative-equidistribution}
 \left|\int_X\phi\,d(\nu_N-\mu)\right|
 \leq\frac{C}{N}\|\ddc\phi\|_{L^\infty(X)}
 \qquad(\phi\in C^\infty(X)).
\end{equation}
In particular, $\nu_N\to\mu$ weakly.
\end{theorem}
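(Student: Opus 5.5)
The plan is to pair the divisor potential against a test function and use the Green identity to move $\ddc$ onto $\phi$. Write
\[
 V_N(z):=\sum_{j=1}^N U_X(z,w_{j,N}).
\]
By \eqref{eq:green-divisor-ddc} we have $\ddc V_N=[D_N]-N\mu$ as currents. Hence, for $\phi\in C^\infty(X)$, integration by parts on the compact surface gives
\[
 N\int_X\phi\,d(\nu_N-\mu)=\int_X\phi\,\ddc V_N=\int_X V_N\,\ddc\phi .
\]
This step is justified because $V_N\in L^1(X)$, since its singularities are logarithmic. The problem is therefore reduced to a uniform bound
\[
 \int_X|V_N|\,d\mu\leq C,
\]
after which
\[
 \left|\int_X\phi\,d(\nu_N-\mu)\right|\leq \frac1N\|V_N\|_{L^1(\mu)}\,\|\ddc\phi\|_{L^\infty}.
\]
Here $\|\ddc\phi\|_\infty$ is understood as the sup of the density of $\ddc\phi$ relative to $\mu$.

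For the $L^1$ bound I would use Route A directly. By \Cref{prop:potential-identity},
\[
 V_N=\log|s_N|_{h^N}-m_N .
\]
It follows that $\|V_N\|_{L^1}\leq 2\int_X|\log|s_N|_{h^N}|\,d\mu$, and \Cref{thm:log-L1} bounds this by a constant independent of $N$.

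There is also a construction-free alternative, matching the form of \Cref{cor:intro-equidistribution}. It starts from $\sup V_N\leq C$, or more weakly from $B_\beta\leq K$. Since $\int V_N\,d\mu=0$, we get
\[
 \int|V_N|\,d\mu=2\int V_N^+\,d\mu .
\]
Jensen applied to $e^{\beta V_N^+}\leq 1+e^{\beta V_N}$ gives $\int V_N^+\leq \beta^{-1}\log(1+K)$. This covers the Route B designs as well, although Route A suffices for the statement as posed.

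Weak convergence then follows. The estimate gives $\int\phi\,d\nu_N\to\int\phi\,d\mu$ for every smooth $\phi$, and smooth functions are dense in $C(X)$ while all the measures are probability measures. The only step needing care is the integration by parts against a current with logarithmic poles. I would handle it by noting that $V_N$ is locally a smooth function plus $\sum\log|z-w_j|$, and applying the planar identity $\int\log|z|\,\ddc\phi=\phi(0)$ chart by chart with a partition of unity. This is just the distributional definition of \eqref{eq:green-divisor-ddc}, so I expect no real obstacle.
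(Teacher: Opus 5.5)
Your proposal is correct and matches the paper's proof: the potential identity together with \Cref{thm:log-L1} gives $\|V_N\|_{L^1}\leq 2\int_X|\log|s_N|_{h^N}|\,d\mu\leq C$, and distributional integration by parts against $\ddc V_N=[D_N]-N\mu$ then yields the estimate and weak convergence. Your construction-free variant via $e^{\beta V_N^+}\leq 1+e^{\beta V_N}$ is also valid. It is the content of \Cref{prop:energy-consequences}, where the paper bounds $\int_X (V_N)_+\,d\mu$ by a Markov/layer-cake argument instead of Jensen.
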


\begin{proof}
The potential identity turns the logarithmic integrability estimate into an $L^1$
bound for the Green potential, and $\ddc$ of that potential is exactly $N$ times
the signed measure we wish to control.  Integration by parts does the rest.

Put
\[
 V_N(z):=\sum_{j=1}^{N}U_X(z,w_{j,N}).
\]
By the potential identity and \Cref{thm:log-L1},
\[
 \|V_N\|_{L^1(X,\mu)}
 \leq \|\log|s_N|\|_{L^1}+|m_N|\leq 2C.
\]
Moreover,
\[
 \ddc V_N=[D_N]-N\mu.
\]
Let $\phi\in C^\infty(X)$.  Since $V_N$ is locally integrable and, by the
Poincar\'e--Lelong identity \eqref{eq:PL},
$\ddc V_N=[D_N]-N\mu$ is a signed measure, distributional integration by
parts yields
\[
 \int_X\phi\,d(\nu_N-\mu)
 =\frac1N\langle\ddc V_N,\phi\rangle
 =\frac1N\langle V_N,\ddc\phi\rangle.
\]
The $L^1$ bound for $V_N$ proves
\eqref{eq:quantitative-equidistribution}.  Its right-hand side tends to
zero, and smooth test functions determine weak convergence of probability
measures on the compact space $X$.
\end{proof}

The two remaining ingredients are a discrepancy estimate and a matching
transport lower bound.  Both are special cases of a general theory of
configurations with bounded exponential Green energy, developed systematically in
\cite{PaperB}; we state here only what the present paper uses, in the form in
which it is used, and refer to \cite{PaperB} for the sharp and more general
statements.

\begin{proposition}[Discrepancy and transport from bounded energy]
\label{prop:energy-consequences}
Let $W_N\in X^N$ and suppose $B_\beta(W_N;X)\leq K<\infty$ for one fixed
$\beta>0$ and every $N$.  Then $K\geq1$,
\begin{equation}\label{eq:energy-L1-control}
 \bigl\|V_{W_N}\bigr\|_{L^1(X,\mu)}\leq\frac{2(1+\log K)}{\beta},
\end{equation}
and consequently
\begin{equation}\label{eq:energy-discrepancy}
 \left|\int_X\phi\,d(\nu_{W_N}-\mu)\right|
 \leq\frac{2(1+\log K)}{\beta N}\|\ddc\phi\|_\infty,
 \qquad\phi\in C^\infty(X).
\end{equation}
Moreover
\begin{equation}\label{eq:Wasserstein-bound}
 W_1(\nu_{W_N},\mu)\leq C_{X,K,\beta}N^{-1/2},
\end{equation}
and every probability measure $\nu$ on $X$ supported on at most $N$ points
satisfies
\begin{equation}\label{eq:Wasserstein-lower}
 W_1(\nu,\mu)\geq c_XN^{-1/2},
\end{equation}
so that the exponent in \eqref{eq:Wasserstein-bound} is optimal.
\end{proposition}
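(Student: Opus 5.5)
We first obtain the $L^1$ control \eqref{eq:energy-L1-control}. Write $V=V_{W_N}$, so that $\int_XV\,d\mu=0$ by \eqref{eq:green-normalization}. Jensen's inequality gives $K\geq B_\beta(W_N;X)\geq1$. Because $V$ has mean zero, $\|V\|_{L^1}=2\int_XV_+\,d\mu$, so it suffices to bound the positive part. For this we use the elementary inequality $t\leq e^{t}$, valid for all real $t$, applied with $t=\beta V_+$:
\[
\beta\int_X V_+\,d\mu\leq\int_X e^{\beta V_+}\,d\mu\leq\int_X\bigl(1+e^{\beta V}\bigr)\,d\mu\leq 1+K.
\]
This gives $\|V\|_{L^1}\leq 2(1+K)/\beta$. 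To replace $K$ by $\log K$ we sharpen the step with a Young/entropy inequality. For $t\geq0$ and $s>0$ one has $st\leq e^{t}+s\log s-s$. Taking $s=1$ and $t=\beta V_+-\log K$, and using $\int e^{\beta V_+}\leq 1+K$, we get
\[
\beta\int_X V_+\,d\mu\leq \log K+\int_X e^{\beta V_+-\log K}\,d\mu\leq\log K+\frac{1+K}{K}\leq 2+\log K .
\]
This yields $\|V\|_{L^1}\leq 2(2+\log K)/\beta$. The exact constant $2(1+\log K)$ should follow from a slightly more careful split on the sets $\{V>0\}$ and $\{V\leq0\}$. Any constant of order $1+\log K$ is enough for what follows.

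Next comes the discrepancy estimate \eqref{eq:energy-discrepancy}. We repeat the integration by parts from the proof of \Cref{thm:equidistribution}, using $\ddc V=[D_W]-N\mu$:
\[
N\int\phi\,d(\nu-\mu)=\int V\,\ddc\phi,
\]
and then apply Hölder's inequality with the $L^1$ bound just obtained.

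For the transport upper bound \eqref{eq:Wasserstein-bound}, we would use Kantorovich–Rubinstein duality. Fix a $1$-Lipschitz test function $f$, and mollify it at scale $\varepsilon=N^{-1/2}$, for instance with the heat semigroup $f_\varepsilon=e^{-\varepsilon^2\Delta}f$. This gives $\|f-f_\varepsilon\|_\infty\leq C\varepsilon$ and $\|\Delta f_\varepsilon\|_\infty\leq C\varepsilon^{-1}$. The pairing with $f_\varepsilon$ is then controlled by \eqref{eq:energy-discrepancy}, which contributes $C N^{-1}\varepsilon^{-1}$. The error $f-f_\varepsilon$ costs at most $2C\varepsilon$ against the two probability measures. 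Balancing the two terms gives order $N^{-1/2}$.

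The main obstacle is the heat-semigroup bound $\|\Delta e^{-t\Delta}f\|_\infty\leq Ct^{-1/2}\operatorname{Lip}(f)$. We would derive it from gradient estimates for the heat kernel on the compact surface, namely $\int|\nabla_zH_t(z,w)|\,d\mu(w)\leq Ct^{-1/2}$, after writing $\Delta e^{-t\Delta}f=\operatorname{div}$-type expressions and integrating by parts once. A local mollifier in a finite atlas would also work.

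Finally, the lower bound \eqref{eq:Wasserstein-lower} follows exactly as in the proof of \Cref{cor:design-holder-discrepancy} with $\alpha=1$. Take $\psi=d(\cdot,S)$, where $S$ is the support of $\nu$. This function is $1$-Lipschitz, vanishes $\nu$-almost everywhere, and satisfies $\int\psi\,d\mu\geq\tfrac12 r_N$ with $r_N\asymp N^{-1/2}$, because the balls of radius $r_N$ around the at most $N$ points of $S$ carry at most half the mass of $\mu$. Duality then gives $W_1(\nu,\mu)\geq c_XN^{-1/2}$.
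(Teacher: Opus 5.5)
\textbf{The gap: the constant in \eqref{eq:energy-L1-control} is not proved.} Apart from this point, your proof is the paper's. The integration by parts for \eqref{eq:energy-discrepancy}, heat smoothing at time $t=N^{-1}$ for the transport upper bound, and the test function $\psi=d(\cdot,S)$ for \eqref{eq:Wasserstein-lower} are exactly what the paper does.

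The statement asserts the explicit constant $2(1+\log K)/\beta$, and it is carried verbatim into \eqref{eq:energy-discrepancy} and \Cref{cor:intro-equidistribution}. You only prove $2(2+\log K)/\beta$ and leave the rest to a ``more careful split'' that you never carry out.

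No split is needed: you lost the constant by discarding a term. With $s=1$, your Fenchel--Young inequality reads $t\le e^t-1$. This holds for every real $t$, and you need that generality: $t=\beta V_+-\log K$ is negative wherever $V_+<(\log K)/\beta$, so your stated restriction $t\geq0$ does not cover your own application. Keeping the $-1$ and using $\int_Xe^{\beta V_+}\,d\mu\le1+K$ gives
\[
 \beta\int_XV_+\,d\mu\le\log K-1+\frac{1+K}{K}=\log K+\frac1K\le1+\log K .
\]
This is the stated bound.

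The paper reaches the same constant differently. Markov's inequality at level $a_K=(\log K)/\beta$ gives the tail bound $\mu\{V>a_K+t\}\le e^{-\beta t}$. The layer-cake formula then yields $\int_XV_+\,d\mu\le a_K+1/\beta$. Both arguments are one line. Yours, once corrected, gives the marginally sharper $\log K+1/K$; the paper's also produces an exponential tail estimate.

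\textbf{A minor bookkeeping point in the transport step.} Write $\Delta_zH_t(z,w)=\Delta_wH_t(z,w)$ and integrate by parts in $w$. The quantity you then must control is $\sup_z\int_X|\nabla_wH_t(z,w)|\,d\mu(w)$, not the $z$-gradient integrated in $w$ as you wrote. Both are $O(t^{-1/2})$ for $0<t\le1$. This follows from the symmetry $H_t(z,w)=H_t(w,z)$ and the pointwise Gaussian gradient bound $|\nabla_xH_t(x,y)|\le Ct^{-3/2}e^{-d(x,y)^2/(Ct)}$ on the compact surface. So this does not affect the argument, which at that point is no less detailed than the paper's citation of standard heat-kernel estimates.
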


\begin{proof}
Each Green kernel has mean zero, so $\int_XV_{W_N}\,d\mu=0$, and Jensen's
inequality gives $K\geq1$.  Markov's inequality centered at
$a_K:=(\log K)/\beta$ gives the exponential tail
$\mu\{V_{W_N}>a_K+t\}\leq e^{-\beta t}$ for $t\geq0$; integrating it by the
layer-cake formula bounds $\int_X(V_{W_N})_+\,d\mu$ by $(1+\log K)/\beta$, and the
mean-zero identity makes the integrals of the positive and negative parts equal.
This is \eqref{eq:energy-L1-control}.  Since
$\ddc V_{W_N}=N(\nu_{W_N}-\mu)$ as distributions, and $V_{W_N}$ is locally
integrable, distributional integration by parts turns
\eqref{eq:energy-L1-control} into \eqref{eq:energy-discrepancy}.

For \eqref{eq:Wasserstein-bound}, let $\phi$ be one-Lipschitz for the conformal
metric $g_\mu$ whose area form is $\mu$, and let $\phi_t=e^{-t\Delta_\mu}\phi$.
Standard compact-manifold heat-kernel estimates \cite[Chs.~15--16]{Grigoryan}
give, for $0<t\leq1$,
\begin{equation}\label{eq:heat-smoothing-Lipschitz}
 \|\phi-\phi_t\|_{L^\infty}\leq C\sqrt t,
 \qquad
 \|\ddc\phi_t\|_{L^\infty}\leq Ct^{-1/2};
\end{equation}
the first by averaging the Lipschitz bound against the heat kernel, the second by
differentiating the kernel twice and using its cancellation against constants.
Applying \eqref{eq:energy-discrepancy} to $\phi_t$ and using that both measures
have mass one,
\[
 \left|\int_X\phi\,d(\nu_{W_N}-\mu)\right|
 \leq2\|\phi-\phi_t\|_\infty+\frac{C_{K,\beta}}{N}\|\ddc\phi_t\|_\infty
 \leq C\left(\sqrt t+\frac1{N\sqrt t}\right).
\]
Choosing $t=N^{-1}$ and taking the supremum over such $\phi$ gives
\eqref{eq:Wasserstein-bound} for the distance of $g_\mu$, hence for any fixed
Riemannian distance, all of which are bi-Lipschitz equivalent on the compact
surface.

For \eqref{eq:Wasserstein-lower}, let $S=\operatorname{supp}\nu$ with
$\#S\leq N$ and let $\psi(x)=d_g(x,S)$, which is one-Lipschitz and vanishes on
$S$.  Fix $C_{\rm vol}$ and $r_*>0$ with $\mu(B_g(x,r))\leq C_{\rm vol}r^2$ for
$0<r\leq r_*$, and put $r_N=(2C_{\rm vol}N)^{-1/2}$.  The union of the
$r_N$-balls around $S$ then has measure at most $1/2$, so
$\int_X\psi\,d\mu\geq r_N/2$, while $\int_X\psi\,d\nu=0$.  Testing the
Kantorovich--Rubinstein duality with $\psi$ gives
$W_1(\nu,\mu)\geq r_N/2=c_XN^{-1/2}$.
\end{proof}

\begin{proof}[Proof of \Cref{cor:intro-equidistribution}]
The discrepancy estimate is \eqref{eq:energy-discrepancy} and the transport
upper bound is \eqref{eq:Wasserstein-bound}; the hypothesis
$\sup_NB_\beta(W_N;X)\leq K$ holds for the holomorphic divisors by
\Cref{thm:main} and for the Route B configurations by \Cref{thm:route-b}.  The
lower bound is \eqref{eq:Wasserstein-lower}, since each $\nu_{W_N}$ is
supported on at most $N$ points.
\end{proof}

\begin{remark}[What each construction contributes]
\label{rem:route-comparison}
The two proofs are not variants of one argument, and their by-products differ.

Route A produces a holomorphic section, hence a divisor constrained to lie in a
single Abel fiber over $[L^N]$, and it yields the two-sided local norm model
\eqref{eq:near-zero-model}.  That model gives genuine microscopic geometry:
simple zeros, separation at the natural scale, bounded covering radius, and the
uniform Voronoi geometry of \Cref{cor:transverse-voronoi}.  It is also what
makes the divisors amenable to the condition theory of \cite{PaperC}.

Route B imposes no Picard class and produces no holomorphic object.  In
exchange, the cell-to-node coupling retains transport information at the
microscopic scale, giving the sharp order $N^{-1/2}$ simultaneously for every
$1\leq p\leq\infty$ in \eqref{eq:design-direct-Wasserstein-p}, which the
energy hypothesis alone does not give for $p>1$.  What Route B does not provide
is separation: distinct nodes may coincide, and only the uniform multiplicity
bound \eqref{eq:design-bounded-multiplicity} is available.

Neither construction is subsumed by the other.  Route A also implies
\eqref{eq:route-b-upper-potential}, since the potential identity writes
$V_{W_N}=\log|s_N|_{h^N}-m_N$ with both terms uniformly controlled; but that
implication does not explain the purely potential-theoretic cancellation
furnished by spectral designs, which requires no holomorphic structure and no
transversality input whatsoever.
\end{remark}

\begin{remark}[Scope of the Bombieri analogy]\label{rem:bombieri-scope}
On $X=\bP^1$ the bundle $\cO(D)$ is isomorphic to $\cO(N)$, the canonical
section becomes a scalar multiple of $\prod_j(z-w_j)$ in an affine chart, and
with the Fubini--Study metric its logarithmic norm is the sum of spherical
Green kernels plus a constant.  \Cref{thm:main} is therefore a coordinate-free
higher-genus analogue of controlling the normalized $L^\beta$ norm of a
polynomial by arranging its zeros.  For $\mathfrak g>1$ the correct global object
is the section of a line bundle rather than a product of scalar linear factors,
and no theta-functional or prime-form representation is needed.  We
prove only the uniform packing statement \Cref{thm:main}.  We do not claim a
sharp multiplicative inequality for arbitrary pairs of sections, and we do not
determine the optimal constant or classify minimizing configurations; those
questions are taken up in \cite{PaperB}.
\end{remark}

\section{Sharpness of the packing constant}\label{sec:sharpness}

\Cref{thm:main} asserts that the pointwise supremum of the Green potential can be
kept bounded, and \Cref{cor:intro-equidistribution} shows that this forces
optimal-order equidistribution.  It is natural to ask how small the constant
$C_{X,\mu}$ can be.  The answer is that it cannot be taken arbitrarily small,
and indeed cannot tend to zero as $N$ grows: the packing problem has a genuine
positive floor, determined by the surface alone.  We prove this here, since it
shows that \Cref{thm:main} is sharp in form and not merely in order, and since
the argument is short given the tools already assembled.

\begin{theorem}[The packing constant does not collapse]
\label{thm:packing-floor}
There is a constant $a_X>0$, depending only on $(X,\mu,g)$, such that every
$N\geq1$ and every configuration $W\in X^N$ satisfy
\begin{equation}\label{eq:packing-floor}
 \sup_{z\in X}V_W(z)\geq\frac{a_X}{2},
 \qquad
 \bigl\|V_W\bigr\|_{L^1(X,\mu)}\geq a_X .
\end{equation}
Consequently, for every $\beta>0$,
\begin{equation}\label{eq:theta-floor}
 \Theta_{N,\beta}(X)-1\geq\frac{\beta^2a_X^2}{8},
\end{equation}
so no constant $C_{X,\mu}$ in \eqref{eq:main-pointwise-packing} can be smaller
than $a_X/2$, and $\Theta_{N,\beta}(X)$ cannot converge to the Jensen value $1$
as $N\to\infty$.
\end{theorem}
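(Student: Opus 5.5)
The plan is to get the $L^1$ lower bound in \eqref{eq:packing-floor} first, from strict superharmonicity of $V_W$ away from its poles. The other two assertions then follow from the mean-zero normalization alone.

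\emph{Reductions.} Since $\int_XV_W\,d\mu=0$, we have $\|V_W\|_{L^1}=2\int_X(V_W)_+\,d\mu\leq2\sup_XV_W$, so the $L^1$ floor gives $\sup V_W\geq a_X/2$. For \eqref{eq:theta-floor}, I will use the elementary inequalities $e^x\geq1+x+x^2/2$ for $x\geq0$ and $e^x\geq1+x$ for $x<0$. Together with $\int_XV_W\,d\mu=0$ they give
\[
 B_\beta(W;X)\geq1+\frac{\beta^2}{2}\int_X(V_W)_+^2\,d\mu
 \geq1+\frac{\beta^2}{2}\Bigl(\int_X(V_W)_+\,d\mu\Bigr)^2
 \geq1+\frac{\beta^2a_X^2}{8},
\]
using Cauchy--Schwarz with $\mu(X)=1$ and $\int(V_W)_+=\|V_W\|_{L^1}/2\geq a_X/2$. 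Taking the infimum over $W$ gives \eqref{eq:theta-floor}. The bound on admissible constants $C_{X,\mu}$ follows from the sup bound.

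\emph{Local gain on a pole-free disk.} Fix a small $\rho>0$, to be chosen depending only on the geometry, and put $r=\rho N^{-1/2}$. Suppose $D$ is a $g$-disk of radius $r$ containing no point of $W$. On $D$ we have $\ddc V_W=-N\mu$. In the rescaled coordinate $u=\sqrt N z$ of \Cref{app:rescaling}, $D$ is a disk of radius comparable to $\rho$, and as in \eqref{eq:covering-laplacian}, $\Delta_uV_W\leq-\kappa$. Hence $V_W+(\kappa/4)|u|^2$ is superharmonic, so its area averages over concentric disks are nonincreasing in the radius. Comparing the averages over radii $\rho/2$ and $\rho$ gives
\[
 \operatorname{avg}_{\rho/2}V_W-\operatorname{avg}_{\rho}V_W\geq c\rho^2 .
\]
Bounding the left side by a constant times $\rho^{-2}\int|V_W|\,dA(u)$, and using $d\mu\asymp N^{-1}dA(u)$, yields
\[
 \int_D|V_W|\,d\mu\geq c\rho^4/N .
\]
For small $N$, where $\rho N^{-1/2}$ might exceed the chart range, I would instead use a fixed-size chart. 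The argument is identical with $N$-dependent but finitely many constants, or one simply takes $\rho$ small enough that $\rho N^{-1/2}\leq\rho$ is always admissible.

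\emph{Counting pole-free disks, the main step.} Take a maximal $2r$-separated set in $X$. The $g$-disks of radius $r$ about its points are pairwise disjoint. By the small-ball area comparison, their number is at least $c_1N/\rho^2$. Each point of $W$ lies in at most one of these disjoint disks, so at least $c_1N/\rho^2-N\geq c_1N/(2\rho^2)$ of them are pole-free once $\rho$ is fixed small. Summing the local gain over these disjoint disks gives
\[
 \|V_W\|_{L^1}\geq\frac{c_1N}{2\rho^2}\cdot\frac{c\rho^4}{N}=c'\rho^2=:a_X,
\]
which is independent of $N$ and $W$.

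The delicate point is exactly this bookkeeping. A single pole-free disk contributes only $O(1/N)$, so the floor is produced collectively by $\asymp N$ disjoint holes. I also have to check that the constants in the rescaled-chart comparison, namely $\kappa$ and the measure density, are uniform over centers and over all $N\geq1$; this is the role of the fixed-data conventions of \Cref{app:rescaling}.
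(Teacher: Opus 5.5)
Your proposal is correct. For the key $L^1$ floor it takes a genuinely different route from the paper; the reductions to the supremum bound and to \eqref{eq:theta-floor} match the paper's exactly.

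\emph{The paper's route} is global and goes through duality. It reruns the heat-smoothing argument of \Cref{prop:energy-consequences}, with $A_W=\|V_W\|_{L^1}$ in place of the energy bound, to get $W_1(\nu_W,\mu)\leq C_X(\sqrt t+A_W/(N\sqrt t))$. It then takes $t=A_W/N$, treating $A_W>N$ as a trivial separate case. Finally it compares this with the quantization lower bound \eqref{eq:Wasserstein-lower} for measures with at most $N$ atoms.

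\emph{Your route} is local and direct.
\begin{enumerate}[label=\textup{(\arabic*)}]
\item Off the poles, $\Delta_uV_W\leq-\kappa$ in rescaled coordinates.
\item Hence every pole-free disk at scale $\rho N^{-1/2}$ carries $\mu$-mass of $|V_W|$ at least of order $\rho^4/N$.
\item Among the $\asymp N/\rho^2$ disjoint such disks, at most $N$ contain a pole. For small fixed $\rho$, that leaves $\asymp N/\rho^2$ holes, for a total of order $\rho^2$.
\end{enumerate}
This is the same superharmonicity mechanism as in the covering argument of \Cref{thm:transverse-covering}, now applied to an arbitrary configuration. One point should be phrased carefully: the $g$-disk $D$ contains, rather than equals, a Euclidean $u$-disk of radius $c\rho$. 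Working on that smaller disk changes only constants.

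\emph{What your route gains.} It needs no heat-kernel derivative bounds \eqref{eq:heat-smoothing-Lipschitz}, no Kantorovich--Rubinstein duality, and no case split. It holds for all $N\geq1$ at once, because $\rho N^{-1/2}\leq\rho$ keeps every disk inside a fixed chart. It also makes $a_X$ explicit in terms of the lower density bound $m_0$, the metric comparison constants, and the small-ball volume constant. It shows where the mass sits: on $\asymp N$ holes just below the spacing scale, each contributing $\asymp1/N$.

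\emph{What the paper's route gains.} It presents the floor as the dual face of the optimal $N^{-1/2}$ transport rate. Its intermediate inequality, $W_1(\nu_W,\mu)\lesssim\sqrt{\|V_W\|_{L^1}/N}$ whenever $\|V_W\|_{L^1}\leq N$, is a quantitative statement about every configuration. Your counting argument does not provide that.
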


\begin{proof}
There are two steps.  First we show $A_W:=\|V_W\|_{L^1}$ cannot be small, by
playing the heat-smoothed upper bound for $W_1$ against the atomic lower bound;
optimizing the smoothing time produces the constant.  Then the mean-zero identity
and convexity of $e^x-1-x$ convert that into the stated bounds.

Write $A_W:=\|V_W\|_{L^1(X,\mu)}$ and $\nu_W$ for the empirical measure.  We
first bound $A_W$ below.  Since $\ddc V_W=N(\nu_W-\mu)$, the argument proving
\Cref{prop:energy-consequences} applies verbatim, with the $L^1$ norm in place of
the energy bound.  Testing a one-Lipschitz $\phi$ against its heat regularization
$\phi_t$ and using \eqref{eq:heat-smoothing-Lipschitz} gives, for $0<t\leq1$,
\[
 \left|\int_X\phi\,d(\nu_W-\mu)\right|
 \leq2\|\phi-\phi_t\|_\infty+\frac1N\|V_W\|_{L^1}\|\ddc\phi_t\|_\infty
 \leq C_X\left(\sqrt t+\frac{A_W}{N\sqrt t}\right),
\]
whence, by Kantorovich--Rubinstein duality,
\begin{equation}\label{eq:floor-transport-upper}
 W_1(\nu_W,\mu)\leq C_X\left(\sqrt t+\frac{A_W}{N\sqrt t}\right),
 \qquad 0<t\leq1 .
\end{equation}
On the other hand $\nu_W$ is supported on at most $N$ points, so
\eqref{eq:Wasserstein-lower} gives the matching lower bound
\begin{equation}\label{eq:floor-transport-lower}
 W_1(\nu_W,\mu)\geq c_XN^{-1/2}.
\end{equation}
Suppose first that $A_W\leq N$.  Choosing $t=A_W/N\in(0,1]$ in
\eqref{eq:floor-transport-upper} and comparing with
\eqref{eq:floor-transport-lower} yields
\[
 c_XN^{-1/2}\leq2C_X\sqrt{\frac{A_W}{N}},
\]
that is, $A_W\geq(c_X/2C_X)^2=:a_X>0$.  If instead $A_W>N\geq1$ the same
conclusion holds after decreasing $a_X$ below $1$.  This proves the second
inequality in \eqref{eq:packing-floor}.

For the first, recall that $\int_XV_W\,d\mu=0$, so the integrals of the positive
and negative parts of $V_W$ agree and
\begin{equation}\label{eq:floor-positive-part}
 \int_X(V_W)_+\,d\mu=\frac{A_W}{2}\geq\frac{a_X}{2}.
\end{equation}
Since $\mu$ is a probability measure, $\sup_XV_W\geq\int_X(V_W)_+\,d\mu$, which
is the first inequality in \eqref{eq:packing-floor}.

Finally, $e^x-1-x\geq x^2/2$ for $x\geq0$ and $e^x-1-x\geq0$ for all real $x$,
so, using $\int_XV_W\,d\mu=0$ once more and then Jensen's inequality on the set
where $V_W>0$,
\[
 B_\beta(W;X)-1
 =\int_X\bigl(e^{\beta V_W}-1-\beta V_W\bigr)d\mu
 \geq\frac{\beta^2}{2}\int_X(V_W)_+^2\,d\mu
 \geq\frac{\beta^2}{2}\left(\int_X(V_W)_+\,d\mu\right)^2
\]
where the second step is Cauchy--Schwarz against the probability measure $\mu$.
By \eqref{eq:floor-positive-part} the right-hand side is at least
$\beta^2a_X^2/8$, and taking the infimum over $W\in X^N$ gives
\eqref{eq:theta-floor}.
\end{proof}

\begin{remark}[What the floor does and does not say]\label{rem:floor-scope}
\Cref{thm:packing-floor} is a statement about \emph{every} configuration, so it
combines with \Cref{thm:main} to pin the optimal constant between two positive
finite bounds, uniformly in $N$:
\[
 \frac{\beta^2a_X^2}{8}\leq\Theta_{N,\beta}(X)-1
 \leq e^{\beta C_{X,\mu}}-1 .
\]
For small $\beta$ the two sides have the same order $\beta^2$, so the estimate is
sharp in the exponent; see \cite{PaperB}, where the quadratic Green energy
$\mathcal E_N$ is shown to be uniformly nondegenerate and the small-exponent
expansion is identified.  The theorem does \emph{not} identify the optimal
constant, decide whether $\Theta_{N,\beta}$ converges as $N\to\infty$, or
describe the minimizing configurations; those questions are the subject of
\cite{PaperB}.
\end{remark}

Neither the floor nor the ceiling is sharp, and the gap between them is not
small.

\begin{problem}[The optimal packing floor]\label{prob:optimal-floor}
Determine the largest admissible $a_X$ in \eqref{eq:packing-floor}, that is, the
exact value of
\[
 \liminf_{N\to\infty}\ \inf_{W\in X^N}\ \sup_{z\in X}V_W(z),
\]
and decide whether the infimum over $W$ is attained.  Even on $X=\bP^1$ this
appears to be open; the two-point case is computed exactly in \cite{PaperB}.  A
sharp answer would in particular determine the best constant in
\eqref{eq:main-pointwise-packing}.
\end{problem}

\section{Uniformity in families and degeneration}\label{sec:moduli}

The constants of \Cref{thm:main} depend on the fixed data $(X,\mu,g,L,h)$, and we
have said nothing about how.  For applications to families of curves --- and for
any statement near the boundary of moduli space --- that dependence is the whole
question.

\begin{problem}[Uniformity in moduli]\label{prob:uniform-moduli}
For fixed genus $\mathfrak g>1$, determine how the constant in
\Cref{thm:main}
depends on the geometry of $X$, especially near the boundary of moduli
space.
\end{problem}

The proof already gives uniformity on bounded-geometry families.  More
precisely, consider a $C^\infty$-precompact family of triples $(X,\mu,g)$ with
smoothly chosen degree-one polarized data $(L,h)$.  Assume a uniform positive
lower bound for the curvature, a uniform injectivity-radius lower bound, and
uniform bounds for the derivatives used in \Cref{app:rescaling} and in the
construction of the transverse sections.  Every constant in the transversality
input, the local zero lemma, and the measure comparison can then be chosen
uniformly.
Consequently the constant in \Cref{thm:main} is uniform on such a family.

Approaching the boundary of moduli space removes the injectivity-radius
bound.  A plausible strategy is to split the surface into thick and thin
parts by the standard hyperbolic collar lemma. The transversality construction
applies uniformly on the thick part. Each thin component is
a long cylinder, where the Green kernel and the relevant sections can be
compared with their flat-cylinder models and then transported by the collar
coordinates.  The remaining task is to match the thick and thin
constructions while obtaining tail and coloring estimates whose loss is
explicit in the collar length.  This should also reveal the rate at which
the constant in \Cref{thm:main} can diverge near the
Deligne--Mumford boundary.

Route B suggests a complementary thin-part model.  On a cylinder of length
$T$, separate variables into Fourier modes around the circle and slowly
varying modes along the axis.  A design that cancels the modes below the
local cutoff can be compared with an evenly spaced product grid, while the
remaining Green kernel is estimated from the explicit cylindrical spectral
series.  The technical issue is not the model cylinder but the interface:
one must match its approximate quadrature rule to a thick-part design
without losing equal weights or local counting.  Tracking this loss in
$T$ would turn the qualitative moduli question into a candidate upper
growth rate for the packing constant.


\appendix

\section{Rescaled holomorphic coordinates and frames}\label{app:rescaling}

For completeness, we justify the bounded-geometry statement used in
\Cref{sec:zero-geometry}.  The point is that the connection on $L^N$ looks
uniform, rather than increasingly curved, after distances are multiplied by
$\sqrt N$.

Fix a finite holomorphic atlas on $X$.  Around any point $x$, after shrinking
the charts uniformly, choose a coordinate $z$ with $z(x)=0$ in which
\[
 g=\lambda_x(z)|dz|^2,
\]
where $\lambda_x$ and its derivatives through any prescribed finite order
are bounded above uniformly, and $\lambda_x$ is bounded below uniformly by
a positive constant.  This is possible by compactness.

Choose a local holomorphic frame $e_x$ for $L$ and write
\[
 |e_x(z)|_h=e^{-\phi_x(z)/2}.
\]
Multiplying $e_x$ by a nowhere-vanishing holomorphic function changes
$\phi_x$ by the real part of a holomorphic function.  We may therefore
normalize
\[
 \phi_x(0)=0,
 \qquad
 \partial\phi_x(0)=0.
\]
The curvature identity $c_1(L,h)=\mu$ implies that the mixed quadratic term
of $\phi_x$ is positive: in these coordinates
$\partial\bar\partial\phi_x(0)$ is a positive multiple of
$dz\wedge d\bar z$, with multiplier bounded above and below because $\mu$
is smooth and positive.  Taylor's theorem, uniformly in the finite atlas,
gives
\begin{equation}\label{eq:weight-taylor}
 \phi_x(z)=q_x|z|^2+\operatorname{Re}(a_xz^2)+O(|z|^3),
\end{equation}
with $q_x$ bounded above and below by positive constants and all remainders
uniform in $x$.  The pure holomorphic quadratic term may also be removed by
a holomorphic change of frame, but this is not necessary: it changes only
bounded connection coefficients, not the uniform ellipticity supplied by
$q_x|z|^2$.

Introduce the rescaled coordinate
\[
 u=\sqrt N\,z.
\]
For every fixed $R$, the disk $|u|<R$ corresponds to
$|z|<R/\sqrt N$ and hence lies in the chosen chart for all sufficiently
large $N$, uniformly in the center $x$.
Then
\begin{equation}\label{eq:rescaled-metric}
 g_N=Ng=\lambda_x(u/\sqrt N)|du|^2,
\end{equation}
so on every fixed disk $|u|<R$, the coefficients of $g_N$ and their
$u$-derivatives are bounded uniformly in $x$ and $N$, and the metric is
uniformly equivalent to the Euclidean metric.  The weight of the tensor
frame $e_x^N$ is
\begin{equation}\label{eq:rescaled-weight}
 N\phi_x(u/\sqrt N)
 =q_x|u|^2+\operatorname{Re}(a_xu^2)+O_R(N^{-1/2}).
\end{equation}
The center normalizations have two consequences used later.  In the tensor
frame $e_x^N$ one has $|e_x^N(0)|_{h^N}=1$, and the Chern connection
one-form in the rescaled $u$-coordinate vanishes at $u=0$. Throughout the
paper we use the particular bounded-geometry family
described above, including its uniform derivative bounds, and not merely these
two normalizations at the center.
The same estimate holds after taking any fixed number of $u$-derivatives.
Consequently the coefficients of the Chern connection of $(L^N,h^N)$,
expressed in this rescaled holomorphic frame, and their first derivatives
are bounded uniformly on $|u|<R$.

If $s_N=f_{x,N}(u)e_x^N$ is holomorphic, then $f_{x,N}$ is an ordinary
holomorphic function. The connection coefficients are bounded. Hence
\eqref{eq:rescaled-weight}
supplies constants $C_1,C_2$, independent of $x$ and $N$, such that on a slightly
smaller fixed disk
\begin{align}
 C_1^{-1}\bigl(|f_{x,N}|+|f_{x,N}'|\bigr)
 &\le |s_N|+|\nabla s_N|_{g_N}
 \le C_1\bigl(|f_{x,N}|+|f_{x,N}'|\bigr),
 \label{eq:jet-comparison}\\
 \sup\bigl(|f_{x,N}|+|f_{x,N}'|+|f_{x,N}''|\bigr)
 &\le C_2\|s_N\|_{C^2_N}.
 \label{eq:C2-comparison}
\end{align}
For the first inequality, write the Chern derivative as
$\nabla^{1,0}f=df+Af$, where $A$ is uniformly bounded, and solve for $df$;
the metric and frame norms are uniformly comparable on the fixed disk.
Differentiating once more gives \eqref{eq:C2-comparison}.  Thus
\eqref{eq:our-uniform-estimates} implies the hypotheses of
\Cref{lem:local-zero} with constants independent of the center and of $N$.
The harmless shrink of the disk keeps derivatives of the chosen frame and
connection away from the chart boundary; all radii used later are fixed
before $N$ is chosen.

Finally, since $d\mu$ is a smooth positive form, in the original coordinate
it has the form $m_x(z)dA(z)$ with $m_x$ uniformly bounded above and below.
Under $u=\sqrt N z$ this becomes
\begin{equation}\label{eq:measure-scaling}
 d\mu=\frac1N m_x(u/\sqrt N)dA(u),
\end{equation}
which is the scaling used in the proof of \Cref{thm:log-L1}.

This rescaling is the only place where the two routes meet. Route A uses it to
make the curvature of $L^N$ and the first derivative of a
transverse section both of order one, so that a single local model governs every
zero.  Route B uses the same scale spectrally, as the point where Weyl's law and
the spacing of $N$ points agree. That coincidence is why
$\Lambda\asymp\sqrt N$ is forced in one argument
and $g_N=Ng$ in the other.


\subsection*{Acknowledgements}
Hedenmalm acknowledges support from the Ministry of Science and Higher
Education of the Russian Federation under agreement 075-15-2025-013, as well
as the hospitaity of BIMSA.

\end{document}